\numberwithin{equation}{section}
\newtheorem{theorem}[equation]{Theorem}
\newtheorem{proposition}[equation]{Proposition}
\newtheorem{lemma}[equation]{Lemma}
\newtheorem{corollary}[equation]{Corollary}
\theoremstyle{definition}
\newtheorem{rmk}[equation]{Remark}
\newenvironment{remark}[1][]{\begin{rmk}[#1] \pushQED{\qed}}{\popQED \end{rmk}}
\newtheorem{eg}[equation]{Example}
\newenvironment{example}[1][]{\begin{eg}[#1] \pushQED{\qed}}{\popQED \end{eg}}
\newtheorem{defn}[equation]{Definition}
\newtheorem{ques}[equation]{Question}
\DeclareMathOperator{\im}{image} 
\DeclareMathOperator{\codim}{codim}
\DeclareMathOperator{\coker}{coker}
\newcommand{\Hom}{\operatorname{Hom}}
\DeclareMathOperator{\rank}{rank}
\DeclareMathOperator{\Ext}{Ext}
\DeclareMathOperator{\Sym}{Sym}
\DeclareMathOperator{\gr}{gr}
\DeclareMathOperator{\rep}{rep}
\newcommand{\GL}{\operatorname{GL}}
\DeclareMathOperator{\cha}{char}
\newcommand{\op}{\operatorname}
\newcommand{\oo}{\otimes}
\newcommand{\la}{\lambda}
\newcommand{\al}{\alpha}
\newcommand{\mc}{\mathcal}
\newcommand{\ol}{\overline}
\newcommand{\V}{\mathcal{V}}
\newcommand{\bb}{\mathbb}
\newcommand{\St}{\operatorname{St}}
\newcommand{\bd}{\mathbf{d}}
\newcommand{\za}{\ensuremath{\alpha}}
\newcommand{\br}{\mathbf{r}}
\newcommand{\bs}{\mathbf{s}}
\newcommand{\kk}{\Bbbk}
\begin{document}
\title{On the collapsing of homogeneous bundles in arbitrary characteristic}

\author{Andr\'as Cristian L\H{o}rincz}
\address{Humboldt--Universit\"at zu Berlin, Institut f\"ur Mathematik, Berlin, Germany}
\email[Andr\'as Cristian L\H{o}rincz]{lorincza@hu-berlin.de}

\begin{abstract}
We study the geometry of equivariant, proper maps from homogeneous
bundles $G\times_P V$ over flag varieties $G/P$ to representations of $G$, called collapsing maps. Kempf showed that, provided the bundle is completely reducible, the image $G\cdot V$ of a collapsing map has rational singularities in characteristic zero. We extend this result to positive characteristic and show that for the analogous bundles the saturation $G\cdot V$ is strongly $F$-regular if its coordinate ring has a good filtration. We further show that in this case the images of collapsing maps of homogeneous bundles restricted to Schubert varieties are $F$-rational in positive characteristic, and have rational singularities in characteristic zero. We provide results on the singularities and defining equations of saturations $G\cdot X$ for $P$-stable closed subvarieties $X\subset V$. We give criteria for the existence of good filtrations for the coordinate ring of $G\cdot X$. 

Our results give a uniform, characteristic-free approach for the study of the geometry of a number of important varieties: multicones over Schubert varieties, determinantal varieties in the space of matrices, symmetric matrices, skew-symmetric matrices, and certain matrix Schubert varieties therein, representation varieties of radical square zero algebras (e.g.\ varieties of complexes), subspace varieties, higher rank varieties, etc.

\vspace{0.3in}

\begin{center}
\textbf{\normalsize{\uppercase{Sur l'effondrement des fibrés homogènes en caractéristique arbitraire
}}}
\end{center}

\vspace{0.2in}

\noindent \textsc{R\'esum\'e}. On étudie la géométrie des applications propres équivariantes de fibrés homogènes $G\times_P V$ sur les variétés de  drapeaux  $G/P$ dans les représentations de $G$, appelées applications d'effondrement. Kempf a montré que lorsque le fibré est complètement réductible, l'image $G\cdot V$ d'une application d'effondrement a des singularités rationelles en caractéristique zéro.  On étend ce résultat à la caractéristique positive et on montre que pour les fibrés analogues  la saturation $G\cdot V$ est fortement $F$-régulière si son anneau des coordonnées a une bonne filtration. De plus, on montre que dans ce cas les images des applications d'effondrement de fibrés homogènes restreintes aux variétés de Schubert sont $F$-rationelles en caractéristique positive, et ont des singularités rationelles en caractéristique zéro. On obtient des résultats sur les singularités et  les équations qui
définissent les saturations $G\cdot X$ pour les sous-variétés $X\subset V$ fermés
$P$-stables.
On donne un critère pour l'existence de bonnes filtrations pour l'anneau des coordonnées de  $G\cdot X$. 

Nos résultats fournissent une approche uniforme et
indépendante de la caractéristique,  à l'étude de la géométrie de nombreuses
variétés  importantes:  multicônes sur les variétés de Schubert, variétés
déterminantales dans l'espace de matrices, matrices symétriques, matrices
antisymétriques et certaines variétés de Schubert  de matrices, variétés
de représentations des algèbres dont le carré du radical est zéro (par ex.\
variétés de complexes), variétés de sous-espaces, variétés de rang
supérieur, etc.
\end{abstract}

\subjclass[2020]{14M15, 14L30, 13A35, 14B05, 14M05, 20G05, 14M12}

\keywords{Collapsing of bundles, Schubert varieties, $F$-regularity, $F$-rationality, rational singularities, good filtrations}

\maketitle

\setcounter{tocdepth}{2}


\newpage

\section{Introduction}

Let $G$ be a connected reductive group over an algebraically closed field $\kk$. Consider a parabolic subgroup $P$ of $G$, and let $W$ be a $G$-module and $V\subset W$ a $P$-stable submodule. The saturation $G\cdot V \subset W$ is the image of the homogeneous vector bundle $G \times_P V$ under the proper \lq\lq collapsing map" \, $G \times_P V \to W$\, induced by the action of $G$ on $W$. 

Many remarkable varieties can be realized through such collapsing of bundles for various choices of $G$, $P$, $W$, $V$ (cf.\ Section \ref{sec:applications}; for more such examples, see \cite{weymanbook}). Generally, the study of their geometry has been undertaken on case-by-case basis. An exception is the seminal work \cite{Kempf76}, where it is shown that in characteristic zero $G\cdot V$ has rational singularities whenever the unipotent radical $U(P)$ of $P$ acts trivially on $V$ (see also \cite{Kempf86}). Further, in this case the singularities of $G\cdot X$ are shown to be well-behaved for a closed $P$-stable subvariety $X \subset V$ \cite[Proposition 1 and Theorem 3]{Kempf76}.

In this paper, we generalize and extend the scope of Kempf's results along several directions. In particular, we give characteristic-free strengthenings of the statements above, under the presence of good filtrations as initiated by Donkin \cite{donkin},  \cite{donkinconj}. We say that a $G$-variety $Z$ is good, if $\kk[Z]$ has a good filtration (see Section \ref{sec:goodfil}). We point out to the reader that all good-related properties hold automatically when $\cha \kk = 0$, and our results below are new in this case as well (with the exception of Theorem \ref{thm:introgood}). 

Let $B\subset P$ a Borel subgroup of $G$ and $T\subset B$ a maximal torus. We denote the set of dominant weights of $G$ by $X(T)_+$. For $\la \in X(T)_+$ we let $\Delta_G(\la)$ denote the corresponding Weyl module (see Section \ref{sec:cohomo}). We consider the Levi decomposition $P= L \ltimes U(P)$ with $L$ reductive. Pick any  $\la_1, \la_2,\dots, \la_n \in X(T)_+$, and for the rest of the introduction fix
\begin{equation}\label{eq:mainsetup}
W= \bigoplus_{i=1}^{n} \Delta_{G}(\la_i) \quad \mbox{and} \quad V=\bigoplus_{i=1}^{n} \Delta_{L}(\la_i).
\end{equation}
We have a natural inclusion $V\subseteq W^{U(P)}$, with equality if $\cha \kk = 0$ (when the bundle is completely reducible \cite{Kempf76}). While the examples in Section \ref{sec:applications} fit into the setup (\ref{eq:mainsetup}), we note that in Section \ref{sec:mainresults} we develop the results in a more general setting (see (\ref{eq:hsplit})).

\begin{theorem}\label{thm:introsat}
Let $X \subset V$ be an $L$-submodule such that $G\cdot X$ is good. Then $G\cdot X$ is strongly $F$-regular when $\cha \kk >0$ (resp.\ is of strongly $F$-regular type when $\cha \kk =0$).
\end{theorem}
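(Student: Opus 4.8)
The plan is to realize $G\cdot X$ as the image of a collapsing map from a homogeneous bundle and exploit the standard Kempf-type construction together with the Frobenius-splitting technology adapted to the presence of a good filtration. First I would set $Y = G\times_P X$, which is a homogeneous bundle over the flag variety $G/P$ with fiber the $L$-module (hence $P$-module, via $U(P)$ acting trivially) $X$, and consider the collapsing map $\pi\colon Y \to G\cdot X$ induced by the $G$-action on $W$. Since $X\subseteq V = W^{U(P)}$ (or at least lies in the relevant $P$-stable piece of $W$), this map is $G$-equivariant and proper, and by construction it is birational onto its image when restricted to a suitable dense open locus — here I would invoke the structure established earlier in the paper for collapsing maps in the setup (\ref{eq:mainsetup}). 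The total space $Y$ is smooth (it fibers over the smooth $G/P$ with linear fibers), so $\pi$ is a resolution-type map, and $G\cdot X$ is its scheme-theoretic image.

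The heart of the argument is to produce a compatible Frobenius splitting. The key input is that $G/B$ (hence $G/P$) is Frobenius split compatibly with all Schubert subvarieties, via a splitting coming from a section of $\omega_{G/B}^{-1}$ of the appropriate degree; this splitting pulls back/extends along the affine bundle $Y\to G/P$ and along $\pi$ to give a Frobenius splitting of $G\cdot X$. The hypothesis that $G\cdot X$ is \emph{good}, i.e.\ that $\kk[G\cdot X]$ has a good filtration, is precisely what is needed to upgrade this bare splitting to the statement that $G\cdot X$ is strongly $F$-regular: one shows that the section computing the splitting can be chosen so that, after a high Frobenius power, it separates any given point from any divisor, using that the good filtration controls the $G$-module structure of $\kk[G\cdot X]$ and its Frobenius pushforwards (in the spirit of the Mathieu–van der Kallen good filtration results and the characterization of strong $F$-regularity via splitting along divisors). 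Concretely: pick $c\in \kk[G\cdot X]$ nonzero; one needs a map $F^e_*\kk[G\cdot X]\to \kk[G\cdot X]$ sending $F^e_* c \mapsto 1$, and the good filtration lets one build this from the corresponding statement on $W$ (where strong $F$-regularity is clear, $W$ being an affine space) by restricting along the surjection $\kk[W]\onto \kk[G\cdot X]$, which is split as a map of $G$-modules because both sides have good filtrations.

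For the characteristic-zero statement, the plan is standard reduction mod $p$: spread the data $(G,P,W,V,X)$ out over a finitely generated $\bb{Z}$-algebra, observe that goodness of $G\cdot X$ is an open/generic condition on the base (good filtrations specialize well, and in characteristic zero they are automatic so the generic fiber is good), and conclude that $G\cdot X$ has strongly $F$-regular type by applying the positive-characteristic case to all closed fibers over a dense open subset of the base.

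The main obstacle I anticipate is the compatibility of the Frobenius splitting with the collapsing map — specifically, checking that the canonical splitting of $G/P$ (or of an ambient $G$-variety such as $G\times_P V$, or of $W$ itself) descends to a splitting of the (possibly singular, possibly non-normal) image $G\cdot X$, and that this descended splitting is compatible with whatever divisor one wants to split along. This is where the good-filtration hypothesis does the real work: without it one gets only Frobenius splitting (hence $F$-purity), and the passage to \emph{strong} $F$-regularity requires the module-theoretic splitting of $\kk[W]\onto\kk[G\cdot X]$ that goodness provides. A secondary technical point is ensuring $\pi\colon Y\to G\cdot X$ genuinely has the expected fibers/birationality in positive characteristic (where $V$ may be strictly smaller than $W^{U(P)}$), but this should be handled by the earlier structural results of the paper on collapsing maps in the setup (\ref{eq:hsplit}).
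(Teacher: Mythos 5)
Your overall framework — collapse, use Frobenius-splitting technology, then reduce mod $p$ in characteristic zero — is the right shape, and your description of the reduction mod $p$ step matches what the paper does (spread out over a $\bb{Z}$-algebra, ensure goodness in large characteristic via a bound like Corollary~\ref{cor:largep}, apply the positive-characteristic result fiberwise). However, the positive-characteristic core of your argument has a genuine gap, and the mechanism you sketch is not the one the paper uses.

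The specific problem is in your claim that strong $F$-regularity of $\kk[G\cdot X]$ can be extracted from the strong $F$-regularity of $\kk[W]$ because ``the surjection $\kk[W]\twoheadrightarrow\kk[G\cdot X]$ is split as a map of $G$-modules since both sides have good filtrations.'' A $G$-module splitting of this surjection is \emph{not} a ring splitting, and in any case strong $F$-regularity does not descend along surjections of rings (determinantal rings are quotients of polynomial rings, and the whole point is that their $F$-regularity is a theorem, not automatic). To transfer a map $F^e_*\kk[W]\to\kk[W]$ to a map $F^e_*\kk[G\cdot X]\to\kk[G\cdot X]$ one needs the splitting to be \emph{compatible} with the ideal $I_{G\cdot X}$, i.e.\ one needs to split $c\cdot F^e$ modulo $I_{G\cdot X}$ for an arbitrary $c\notin I_{G\cdot X}$ and \emph{simultaneously} carry $I_{G\cdot X}$ into $I_{G\cdot X}$. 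That compatibility is precisely the content of the theorem; asserting it ``does the real work'' without an argument leaves the proof incomplete. Likewise, the statement that the canonical Mehta--Ramanathan splitting of $G/P$ ``pulls back/extends along the affine bundle and along $\pi$ to give a Frobenius splitting of $G\cdot X$'' requires a concrete compatible splitting of the total space $G\times_P X$ that descends under $q_*$ — this is not automatic and is not established.

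The paper's actual proof of part~(4) of Theorem~\ref{thm:main} takes a different route that sidesteps compatible splittings of the collapsing map entirely. It works on the invariant-theoretic side: one identifies $\kk[G\cdot X]$ with $(\kk[G/U_I]\oo\kk[X])^L$, observes that by linear reductivity of $T$ it suffices to prove strong $F$-regularity of $R=(\kk[G/U_I]\oo\kk[X])^{U_L}$, and then shows $R$ is strongly $F$-regular by a combination of (i) factoriality of $R$ and of $\kk[G]^{U\times U_I}$ (Popov), (ii) the Gorenstein property via factorial $+$ Cohen--Macaulay (Murthy), (iii) Hashimoto's $G$-$F$-purity machinery applied to $\kk[G]^{U\times U_I}\oo\kk[X]$ to get strong $F$-regularity of $R^U$, (iv) $F$-rationality of $R$ via the Grosshans/Popov flat deformation to the associated graded, and finally (v) Gorenstein $+$ $F$-rational implies strongly $F$-regular (Hochster--Huneke). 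None of this is present in your proposal; the compatibility issue you flag as ``the main obstacle'' is never resolved, and the $G$-module-splitting shortcut you offer in its place does not work.
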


This illustrates that good filtrations are responsible for the geometric behavior of saturations in positive characteristic, a phenomenon that is apparent in invariant theory as well \cite{hashiinv}, \cite{hashiunip}. Example \ref{ex:counter} demonstrates that this assumption cannot be dropped.

The following is our main criterion for the existence of good filtrations (for the definition of good pairs, see Section \ref{sec:goodfil}).

\begin{theorem}\label{thm:introgood}
Assume that $W$ is good, and that $(V,X)$ is a good pair for some closed $L$-variety $X\subset V$. Then $(\,W\,, \,\, G\cdot X)$ is a good pair of $G$-varieties.
\end{theorem}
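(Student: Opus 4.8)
The plan is to work geometrically on the homogeneous bundle $\mathcal{X} := G\times_P X$ and compare its coordinate ring with that of its image $G\cdot X$ under the collapsing map $\pi \colon G\times_P X \to W$. The essential observation is that $\pi$ factors the inclusion $G\cdot X \hookrightarrow W$, so there is a $G$-equivariant ring map $\kk[W]\to \kk[G\cdot X]\to \Gamma(\mathcal{X},\mathcal{O}_\mathcal{X})$, and one wants to show (i) that $\Gamma(\mathcal{X},\mathcal{O}_\mathcal{X})$ has a good filtration as a $G$-module, and (ii) that the composite $\kk[W]\to \Gamma(\mathcal{X},\mathcal{O}_\mathcal{X})$ surjects onto $\kk[G\cdot X]$, with the latter inheriting a good filtration as a $G$-submodule (equivalently, a $G$-quotient) via the good-pair hypotheses. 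For the second point one uses the standard fact that a $G$-module map whose source and target have good filtrations has image with a good filtration, together with Donkin's theorem that good filtrations are preserved under tensor products; the role of the hypothesis ``$W$ is good'' is to feed a good filtration into $\kk[W] = \Sym(W^*)$ (here one needs that the symmetric algebra of a module with a good filtration again has one, which holds for the relevant $\Delta_G(\lambda_i)$ by Donkin/Mathieu-type results).

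The heart of the argument is step (i): identifying $\Gamma(G\times_P X, \mathcal{O})$ with an induced module and showing it has a good filtration. Because $X\subseteq V$ is a closed $L$-stable subvariety on which $U(P)$ acts trivially, $\kk[X]$ is an $L$-module — in fact a $P$-module with $U(P)$ acting trivially — and by the projection formula / base change for the affine bundle $G\times_P X \to G/P$ one gets $\Gamma(G\times_P X,\mathcal{O}) = \operatorname{ind}_P^G \kk[X] = H^0(G/P, \mathcal{L}(\kk[X]))$, the sections of the associated (infinite-rank) vector bundle. The key input is then the cohomological criterion for good filtrations: an induced module $\operatorname{ind}_P^G M$ has a good filtration provided $M$ has a good filtration as an $L$-module (equivalently as a $P$-module via inflation) and the higher cohomology $R^i\operatorname{ind}_P^G M$ vanishes for $i>0$ — this is precisely where the ``good pair'' language from Section~\ref{sec:goodfil} enters, since $(V,X)$ being a good pair is what guarantees $\kk[X]$ has a good filtration as an $L$-module (it is an $L$-equivariant quotient of $\kk[V]=\Sym(V^*)=\bigotimes_i \Sym(\Delta_L(\lambda_i)^*)$, and good filtrations pass to quotients of good-filtration modules here exactly because of the good-pair hypothesis, which controls the relevant $\operatorname{Ext}^1$ vanishing). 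Donkin's theorem that $\operatorname{ind}_P^G$ takes $P$-modules with good filtrations to $G$-modules with good filtrations, with no higher derived functors, then finishes this step.

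Having established that both $\kk[W]$ and $\Gamma(G\times_P X,\mathcal{O})$ carry good filtrations as $G$-modules and that the natural $G$-map between them has image $\kk[G\cdot X]$, one concludes that $\kk[G\cdot X]$ — being the image of a $G$-module homomorphism between two good-filtration modules — has a good filtration; and since $\kk[G\cdot X]$ is a $G$-equivariant quotient of $\kk[W]$, the pair $(W, G\cdot X)$ is a good pair of $G$-varieties, which is the assertion. The main obstacle I anticipate is the cohomology vanishing $R^i\operatorname{ind}_P^G \kk[X]=0$ for $i>0$: one must deduce it from the good-filtration property of $\kk[X]$ as an $L$-module rather than assume it, which is exactly what the good-pair formalism is designed to deliver, but checking that $(V,X)$ being a good pair propagates correctly through the parabolic induction (and that ``$W$ good'' is the right companion hypothesis to make $\kk[W]\twoheadrightarrow\kk[G\cdot X]$ compatible with filtrations) is the delicate bookkeeping. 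A secondary subtlety is the passage to the infinite-dimensional setting — $\kk[X]$ and these modules are not finite-dimensional — so one works with good filtrations by finite-dimensional submodules and uses that induction and the relevant cohomology commute with the directed colimits involved.
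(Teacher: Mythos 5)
There is a genuine gap in your argument, and it sits exactly where the real difficulty lies.

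The step that fails is your assertion that ``a $G$-module map whose source and target have good filtrations has image with a good filtration.'' This is false in positive characteristic: the image of such a map need not have a good filtration (e.g.\ the image may be a proper non-split submodule of a dual Weyl module). The correct tool, which the paper isolates as Lemma~\ref{lem:gr}, says something weaker and more precise: if $M$ has a good filtration \emph{and} the induced map $M^{U}\to N^{U}$ on $U$-invariants is onto, then both $\ker f$ and $N$ have good filtrations and $f$ is onto. Without the surjectivity on $U$-invariants you cannot run this step. A related second gap: even granting that $\kk[G\cdot X]$ carries a good filtration, the good-pair property $(W,G\cdot X)$ requires that $I_{G\cdot X}\subset \kk[W]$ have a good filtration, and by Corollary~\ref{cor:goodpair} this is \emph{equivalent} to the surjectivity $\kk[W]^{U}\twoheadrightarrow\kk[G\cdot X]^{U}$ — it is not a formal consequence of $\kk[G\cdot X]$ being good, because the long exact sequence for $\Ext_G(\Delta(\lambda),-)$ leaves a potential obstruction in $\Ext^1_G(\Delta(\lambda), I_{G\cdot X})$, namely the cokernel of $\Hom_G(\Delta(\lambda),\kk[W])\to\Hom_G(\Delta(\lambda),\kk[G\cdot X])$.

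The missing ingredient that both gaps point to is the hypothesis, built into the Weyl-module structure of $W$ and $V$ in (\ref{eq:mainsetup}), that the natural map $\left.\psi\right|_{V}\colon V\to ((W^*)^{U_I^-})^*$ is a \emph{split} injection of $L$-modules (Lemma~\ref{lem:iso}(b)). Your write-up never engages with this. The paper's proof of Theorem~\ref{thm:good}(b), from which Theorem~\ref{thm:introgood} follows, works entirely on $U$-invariants: by Corollary~\ref{cor:goodpair} one reduces to showing $\kk[W]^{U^-}\to\kk[G\cdot X]^{U^-}$ is onto; by Proposition~\ref{prop:invariantalg} (in the form of Remark~\ref{rem:unipara}) one identifies $\kk[G\cdot X]^{U^-}\cong\kk[X]^{U_L^-}$; the good pair $(V,X)$ reduces further (again via Corollary~\ref{cor:goodpair}) to showing $\kk[W]^{U^-}\to\kk[V]^{U_L^-}$ is onto; and this last surjectivity is exactly what the split $\left.\psi\right|_{V}$ delivers, since then $\Sym((W^*)^{U_I^-})\to\Sym(V^*)$ is split and hence surjective after taking $U_L^-$-invariants. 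Your step (i), identifying $\Gamma(G\times_P X,\mc{O})$ with $\operatorname{ind}_P^G \kk[X]$ and invoking Donkin's theorem on parabolic induction, is sound and is indeed a component of the ``if'' direction of Theorem~\ref{thm:good}(a); but it does not substitute for the $U$-invariant surjectivity that the split condition is needed to furnish.
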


In particular, this implies that $G\cdot V$ is good whenever $\cha \kk > \max \{ \dim \Delta_{G}(\la_i) \,| \, 1\leq i \leq n\}$. However, in concrete situations the bound on $\cha \kk$ can be further improved significantly (cf. Sections \ref{subsec:det}, \ref{sec:radsquare}). See Theorem \ref{thm:good} for other criteria in this direction.

We extend the collapsing method to various relative settings, thus greatly increasing its versatility. These include restrictions to Schubert varieties or multiplicity-free subvarieties of flag varieties (for the latter, see Corollary \ref{cor:multfree}). Below $\mc{W}$ denotes the Weyl group of $G$.

\begin{theorem}\label{thm:intromain}
Consider a closed $L$-variety $X\subset V$ and assume that $G\cdot X$ is good. For any $w\in \mc{W}$, we have:
\begin{enumerate}
\item $\ol{BwX}$ is normal if and only if $X$ is so.
\item If $\cha \kk = 0$, then $\ol{BwX}$ has rational singularities if and only if so does $X$.
\item If $\cha \kk> 0$ and $X$ is an $L$-submodule of $V$, then $\ol{BwX}$ is $F$-rational.
\end{enumerate}
\end{theorem}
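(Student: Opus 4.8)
The plan is to realize $\ol{BwX}$ as a collapsing image over a Schubert variety and transfer cohomological/singularity data between $X$, the ``relative'' bundle, and $\ol{BwX}$. Concretely, write $G/P$ with its Schubert cells, and for $w \in \mc{W}$ consider the Schubert variety $S_w = \ol{BwP/P} \subset G/P$, together with the restriction of the homogeneous bundle $\mc{V} = G\times_P V$ to $S_w$; its total space maps properly onto $\ol{BwX}$ via the restriction of the collapsing map $\mc{V} \to W$. The key structural input is that $S_w$ admits a $B$-canonical Frobenius splitting compatible with smaller Schubert subvarieties, and more importantly that the fibration structure lets us relate the pushforward $\mathcal{O}$-sheaf on $\ol{BwX}$ to the coordinate sheaf of $X$ ``spread out'' over $S_w$. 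I would first establish the case $w = w_0$ (or the ``full'' case), where $\ol{BwX}$ degenerates to $G\cdot X$ and the hypothesis ``$G\cdot X$ is good'' applies directly, then handle general $w$ by restriction. For this restriction step I expect to need a vanishing statement of the form $H^{>0}(S_w, \, \pi_*\mathcal{O}_{\mc{V}|_{S_w}} \otimes \mathcal{L}) = 0$ for suitable line bundles $\mathcal{L}$ on $G/P$, which should follow from the good filtration on $\kk[G\cdot X]$ combined with the known cohomology vanishing for line bundles on Schubert varieties (Ramanan--Ramanathan, Mehta--Ramanathan).

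For part (1), normality: the map $G\times_P X \to G\cdot X$ (and its relative version over $S_w$) has connected fibers and the source is normal iff $X$ is, since $G\times_P X \to G/P$ is a fiber bundle with fiber $X$, and $S_w$ is normal. One direction (normality of $X$ forces normality of $\ol{BwX}$) follows because the collapsing map is proper birational onto its image with the source normal, provided one knows $\pi_*\mathcal{O} = \mathcal{O}$, which is where the goodness hypothesis enters to guarantee that no correction terms appear. The converse (normality of $\ol{BwX}$ forces normality of $X$) should follow by taking the fiber of $\ol{BwX}$ over the distinguished point, or by a slice argument identifying $X$ with a transverse section.

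For parts (2) and (3), I would argue that $R^i\pi_*\mathcal{O} = 0$ for $i>0$ and $\pi_*\mathcal{O}_{\mc{V}|_{S_w}} = \mathcal{O}_{\ol{BwX}}$, so that the singularities of $\ol{BwX}$ are governed by those of the total space $\mc{V}|_{S_w}$, which is a fiber bundle over the Frobenius-split (resp.\ rational-singularity) variety $S_w$ with fiber $X$. In characteristic zero this immediately gives rational singularities for $\ol{BwX}$ when $X$ has them, via the standard ``rational singularities are a bundle-local property plus Grauert--Riemenschneider/Kempf-style vanishing'' package; the good filtration hypothesis is what makes the higher direct images vanish and makes the characteristic-zero reduction compatible with the positive-characteristic picture. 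In positive characteristic, for part (3) with $X$ a submodule, $\mc{V}|_{S_w}$ should be compatibly Frobenius split (the Schubert variety contributes its canonical splitting, the vector-bundle directions are split since $X$ is a linear subspace), and $F$-rationality of $\ol{BwX}$ then follows from Frobenius splitting together with Cohen--Macaulayness — which itself comes from the good filtration forcing a filtration of $\kk[\ol{BwX}]$ by modules with known depth, or alternatively from strong $F$-regularity of $G\cdot X$ (Theorem \ref{thm:introsat}) degenerating appropriately.

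The main obstacle I anticipate is the vanishing $R^{>0}\pi_*\mathcal{O} = 0$ and $\pi_*\mathcal{O} = \mathcal{O}$ for the \emph{relative} collapsing map over $S_w$ rather than over all of $G/P$: over the full flag variety this is essentially Kempf's vanishing reorganized through good filtrations, but over a Schubert variety one must control cohomology of the bundle twisted by line bundles that are only nef (not ample) on $S_w$, and ensure the good filtration on $\kk[G\cdot X]$ restricts compatibly. I expect to handle this by an induction on $\dim S_w$ using the Bott--Samelson/Demazure desingularization of $S_w$ and the fact that a $P$-module with good filtration has vanishing higher cohomology along the fibers of the Bott--Samelson tower, pulling the argument back to the smooth case and then descending.
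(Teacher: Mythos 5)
Your high-level outline is in the right spirit: realize $\ol{BwX}$ as the image of the collapsing map restricted to $\ol{BwP}\times_P X$, establish cohomology vanishing coming from good filtrations and cohomology of $\V(\la)$ on Schubert varieties, and transfer singularities. This matches the skeleton of the paper's proof (Theorem~\ref{thm:main}). But there are two substantive problems.

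First, for the normality statement (1), you invoke that ``the collapsing map is proper birational onto its image.'' That is neither true in general nor needed. The map $q\colon\ol{BwP}\times_P X\to\ol{BwX}$ can have positive-dimensional generic fiber. The paper avoids birationality entirely: the forward direction follows from $q_*\mc{O}_{\ol{BwP}\times_P X}\cong\mc{O}_{\ol{BwX}}$ (which is enough, since a variety dominated by a normal variety with $q_*\mc{O}=\mc{O}$ is normal), while the converse direction uses the precise direct-summand mechanism of Proposition~\ref{prop:invariantalg}, namely the identification $\kk[\ol{BwX}]^{U(w)}\cong\kk[X]$, not a ``slice argument''; you leave that direction completely vague.

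Second, and more seriously, your positive-characteristic argument for $F$-rationality has a genuine gap. You propose to deduce $F$-rationality of $\ol{BwX}$ from Frobenius splitting of the total space together with Cohen--Macaulayness. But $F$-splitting plus Cohen--Macaulayness does not imply $F$-rationality: for instance, the cone over a supersingular-free elliptic curve (say $\kk[x,y,z]/(x^3+y^3+z^3)$ in characteristic $p\equiv 1\pmod 3$) is $F$-split and Gorenstein yet not $F$-rational. The paper's route is entirely different: it uses the Grosshans--Popov filtration to degenerate $A=\kk[\ol{BwX}]$ flatly to
\[
\gr A \;\cong\; \bigl(\kk[\ol{Bww_IB}]^U_+\otimes\kk[X]^{U_L^-}\bigr)^T,
\]
where $\kk[\ol{Bww_IB}]^U_+\cong C(X(ww_I))$ is the multi-section ring of the Schubert variety $X(ww_I)$, known to be strongly $F$-regular (Lemma~\ref{lem:cone}, via global $F$-regularity of Schubert varieties), and $\kk[X]^{U_L^-}$ is strongly $F$-regular when $X$ is an $L$-submodule (\cite[Corollary 4.14]{hashiunip}). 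Strong $F$-regularity of $\gr A$ then descends to $F$-rationality of $A$ via \cite[Theorem 4.2]{hh}. This flat degeneration to a horospherical-type ring is the central tool for part~(3), and it does not appear anywhere in your proposal. For part~(2) in characteristic zero your route (vanishing plus Kov\'acs descent, relying on rational singularities of Schubert varieties and of the fibers) is viable --- the paper itself uses that argument for Corollary~\ref{cor:multfree} --- but the paper uses the same degeneration here as well so that both characteristics are treated uniformly, and so that the implication also yields the converse via the direct-summand property and Boutot.
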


Note that when $w$ is the longest element in $\mc{W}$, we have $\ol{BwX}=G\cdot X$.

\smallskip

Frequently (e.g.\ when $G\cdot X$ is a spherical variety), the varieties $\ol{BwX}$ are orbit closures under the action of the Borel subgroup $B$ (see Section \ref{sec:applications}). The singularities of such varieties have been investigated mostly in the spherical case (e.g. \cite{projnormschub}, \cite{brispher}, \cite{brithom}), but they are not well understood \cite[Comments 4.4.4]{perrin}. Theorem \ref{thm:intromain} is one of the first of its kind at this level of generality, applicable equally in non-spherical situations as well.

When $P$ is itself a Borel subgroup, we sharpen some results on singularities (see Corollary \ref{cor:borel}), extending the case of multicones over Schubert varieties \cite{multicone}, \cite{hashischub}. 

Next, we provide a relative result on the defining ideals of saturations $G\cdot X$. For this, we introduce the notion of good generators of an ideal, see Definition \ref{def:goodgen}.
\begin{theorem}\label{thm:introdefi}
Let $(V,X)$ be a good pair with $G\cdot V$ good, and denote by $I_X \subset \kk[V]$ the defining ideal of $X \subset V$. Let $M$ be the span of a set of good generators of $I_X$ and take a basis $\mc{P}'$ of the $G$-module $H^0(G/P, \mc{V}(M)) \, \subset \kk[G\cdot V]$. Consider:
\begin{enumerate}
\item A set of generators $\mc{P}_{G\cdot V}$ of the defining ideal $I_{G\cdot V} \subset \kk[W]$ of $G\cdot V$;
\item A lift $\tilde{\mc{P}'} \subset \kk[W]$ of the set $\mc{P}' \subset \, \kk[W]/I_{G\cdot V}$.
\end{enumerate}
Then the defining ideal of $G \cdot X$ in $\kk[W]$ is generated by the set $\mc{P}_{G\cdot V} \, \cup \, \tilde{\mc{P}'}$. 
\end{theorem}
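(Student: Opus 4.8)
The plan is to work with the collapsing map $\pi \colon G \times_P V \to W$ and exploit that, since $G \cdot V$ is good, the pushforward of the structure sheaf behaves well and $\kk[G \cdot V] = H^0(G/P, \mc{V}(\kk[V]))$ with a good filtration (this identification should have been established in Section \ref{sec:mainresults} together with the fact that the collapsing map for such bundles is a resolution onto its image, or at least induces an isomorphism on coordinate rings). The idea is that $G \cdot X$ is exactly the scheme-theoretic image of $G \times_P X \hookrightarrow G \times_P V \to W$, so its ideal is the kernel of $\kk[W] \to H^0(G/P, \mc{V}(\kk[V]/I_X))$. Concretely, one has the exact sequence of $P$-modules $0 \to I_X \to \kk[V] \to \kk[X] \to 0$, which induces a sequence of associated bundles on $G/P$; taking global sections and using that the relevant $H^1$ vanishes (this is where the good-pair hypothesis on $(V,X)$ enters, via the cohomological characterization of good filtrations) gives a short exact sequence $0 \to H^0(G/P, \mc{V}(I_X)) \to \kk[G \cdot V] \to \kk[G \cdot X] \to 0$. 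Thus the defining ideal of $G \cdot X$ inside $\kk[G\cdot V]$ is precisely $H^0(G/P, \mc{V}(I_X))$, and the defining ideal of $G \cdot X$ inside $\kk[W]$ is the preimage of this under $\kk[W] \onto \kk[G \cdot V]$.

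Having reduced to computing $H^0(G/P, \mc{V}(I_X))$ as a $\kk[G\cdot V]$-module, I would next show it is generated by $\mc{P}'$ as an ideal of $\kk[G\cdot V]$ — equivalently, that $H^0(G/P, \mc{V}(I_X))$ is generated by its $G$-submodule $H^0(G/P, \mc{V}(M))$ as a module over $\kk[G\cdot V] = H^0(G/P, \mc{V}(\kk[V]))$. The point here is that $M$ is a set of good generators of $I_X$: by the defining property in Definition \ref{def:goodgen}, the $L$-module $M$ generates $I_X$ as an ideal of $\kk[V]$ \emph{and} the associated filtration/grading behaves compatibly so that the surjection $\kk[V] \otimes M \onto I_X$ (multiplication) stays surjective after applying $\mc{V}(-)$ and taking global sections; the obstruction to this surjectivity on $H^0$ lives in an $H^1$ which vanishes because all the sheaves involved have filtrations by $\mc{V}(\text{good modules})$ — again the good-pair hypothesis. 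This yields surjectivity of $\kk[G\cdot V] \otimes H^0(G/P,\mc{V}(M)) \to H^0(G/P, \mc{V}(I_X))$, i.e. $\mc{P}'$ generates the ideal of $G\cdot X$ in $\kk[G\cdot V]$.

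Finally, I would lift back to $\kk[W]$: pick lifts $\tilde{\mc{P}'} \subset \kk[W]$ of $\mc{P}'$, and note that the preimage in $\kk[W]$ of the ideal $(\mc{P}') \subset \kk[G\cdot V]$ is generated by $\tilde{\mc{P}'}$ together with any set of generators $\mc{P}_{G\cdot V}$ of $I_{G\cdot V} = \ker(\kk[W] \onto \kk[G\cdot V])$ — this is a completely standard fact about preimages of ideals under a surjection of rings. Combining with the previous paragraph, $I_{G \cdot X} = (\mc{P}_{G\cdot V} \cup \tilde{\mc{P}'})$, as claimed. The main obstacle is the middle step: proving that good generators of $I_X$ remain generators after the collapsing construction, i.e. the vanishing of the relevant first cohomology on $G/P$ of the sheaf built from the syzygy/relation module of the generating map $\kk[V]\otimes M \to I_X$. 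This requires that the kernel of that map — or at least a suitable truncation — again admits a filtration by duals of Weyl modules (or rather the $\mc{V}(-)$ of good $L$-modules), which is precisely what "good pair" and "good generators" are designed to guarantee; I would isolate this as a lemma and prove it by the same good-filtration bookkeeping used for Theorem \ref{thm:introgood}.
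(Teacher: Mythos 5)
Your skeleton matches the paper's: (1) identify the defining ideal $J$ of $G\cdot X$ inside $\kk[G\cdot V]$, (2) show $J$ is generated over $\kk[G\cdot V]$ by $\mc{P}'$, (3) lift back to $\kk[W]$ using the elementary fact about preimages of ideals under a ring surjection. Steps (1) and (3) are fine and agree with the paper. The gap is in step (2), and it is a real one.

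You argue that the multiplication $\kk[V]\oo M \onto I_X$ stays surjective after applying $\mc{V}(-)$ and $H^0(G/P,-)$, which is correct: the kernel has a good $L$-filtration by Lemma \ref{lem:gr}/Lemma \ref{lem:exist}, so $H^1$ vanishes (Lemma \ref{lem:surj}), giving surjectivity of $H^0(G/P,\mc{V}(\kk[V]\oo M)) \to H^0(G/P,\mc{V}(I_X))$. But then you leap from this to surjectivity of $\kk[G\cdot V]\oo M' \to H^0(G/P,\mc{V}(I_X))$, where $M'=H^0(G/P,\mc{V}(M))$. These are related by the K\"unneth-type map
\[
H^0\bigl(\mc{V}(\kk[V])\bigr)\oo H^0\bigl(\mc{V}(M)\bigr)\;\longrightarrow\;H^0\bigl(\mc{V}(\kk[V]\oo M)\bigr),
\]
and surjectivity of the second factor of the composition says nothing about surjectivity of the composition itself. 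Nothing in the argument addresses why the image of the K\"unneth map, followed by the surjective $H^0(\text{mult})$, fills up all of $H^0(\mc{V}(I_X))$. Indeed you end the write-up by flagging the ``good filtration of the syzygy'' as the main obstacle, but that part is actually the easy half; the unaddressed obstacle is precisely this step from global sections of the tensor bundle to the honest tensor product of global-section modules.

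The paper bypasses this entirely by working with $U^-$-invariants rather than sheaf cohomology. It considers $m_{\mc{P}'}\colon \kk[G\cdot V]\oo M'\to J$ directly; this domain has a good $G$-filtration by Proposition \ref{prop:mathieu}, so by Lemma \ref{lem:gr} surjectivity of $m_{\mc{P}'}$ reduces to surjectivity on $U^-$-invariants. The invariants of the target and source are then controlled via Remark \ref{rem:unipara}: $\kk[G\cdot V]^{U^-_I}\cong\kk[V]$, $J^{U^-_I}\cong I_X$, and $M\subset M'^{U^-_I}$, so one exhibits a subcomposite
\[
(\kk[V]\oo M)^{U^-_L}\;\hookrightarrow\;(\kk[G\cdot V]\oo M')^{U^-}\;\longrightarrow\; J^{U^-}=I_X^{U^-_L}
\]
which is surjective by the good-generators hypothesis (Definition \ref{def:goodgen}(2)). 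That forces the middle arrow to be surjective, and Lemma \ref{lem:gr} then upgrades it to surjectivity of $m_{\mc{P}'}$ on the nose, with no K\"unneth issue. If you want to salvage your cohomological version, you would effectively have to re-prove Lemma \ref{lem:gr} in sheaf terms, at which point you are doing the paper's argument anyway; I recommend switching to the $U^-$-invariant route.
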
 

In Theorem \ref{thm:defi} we give a version of the above that yields good defining equations, which we use to readily find (good) defining equations for the examples in Sections \ref{subsec:det} and \ref{sec:radsquare}. 

Saturations of the type $G\cdot V$ appear in various forms throughout the existing literature, and a range of techniques have been developed to better understand their geometry. Applying the results above in the special case of radical square zero algebras (see Section \ref{sec:radsquare}), we simultaneously sharpen and generalize the main results in \cite{Kempfcomp}, \cite{DecoStrick}, \cite{Strick1}, \cite{brioncomp}, 
\cite{Strick2}, \cite{MT1}, \cite{MT2} that concern the singularities and defining equations of the Buchsbaum--Eisenbud varieties of complexes as well as varieties of complexes of other type. In addition, we obtain that certain $B$-orbit closures in varieties of complexes are $F$-rational when $\cha \kk >0$ (resp.\ have rational singularities when $\cha \kk=0$).

Our results provide a general method for the investigation of the geometry of parabolically induced orbit closures in a representation $W$ of a reductive group $G$. Namely, for any choice of a parabolic $P \subset G$, we can take the representation $V$ of the smaller reductive group $L$ as in (\ref{eq:mainsetup}) with trivial $U(P)$-action; choosing an $L$-orbit closure $X=\ol{Lx}$ (for any $x\in V$), saturation gives a $G$-orbit closure $G\cdot X = \ol{Gx} \subset G\cdot V \subset W$. By considering all such possible choices, we obtain a large set of $G$-orbit closures in $W$ whose singularities and defining equations are inherited from the smaller ones according to the results above.

The Cohen--Macaulay property for collapsing of bundles in positive characteristic is a consequence of the study of their $F$-singularities. This relies on techniques from tight closure theory that was developed by Hochster and Huneke \cite{hh1}, \cite{hh}. In Section \ref{subsec:ample}, we translate this property into Griffiths-type vanishing results for the cohomology of such bundles on Schubert varieties in positive characteristic, extending the classical Kodaira-type vanishing results for line bundles \cite{schubsplit}, \cite{schubfreg}, \cite{smith2}.

\subsection*{Acknowledgments}
The author would like to express his gratitude to Ryan Kinser for his valuable comments and suggestions on this work.


\section{Preliminaries}

We work over an algebraically closed field $\kk$ of arbitrary characteristic (see Remark \ref{rem:field}).

An action of an algebraic group $G$ on an algebraic variety $X$ is always assumed to be algebraic, so that the map
$G \times X \to X$ is a morphism of algebraic varieties. We call a (possibly infinite-dimensional) vector space $V$ a rational $G$-module, if $V$ is equipped with a linear action of $G$, such that every $v\in V$ is contained in a finite-dimensional $G$-stable subspace on which $G$ acts algebraically. All modules considered are assumed to be rational of countable dimension.

Unless otherwise stated, throughout a ring or algebra is commutative, finitely generated over $\kk$ with a multiplicative identity.

\subsection{Reductive groups}
Let $G$ be a connected reductive group over $\kk$, $B$ a Borel subgroup and $U$ its unipotent radical. We fix a maximal torus $T \subset B$, and denote by $X(T)$ its group of characters. We denote by $\langle \cdot, \cdot \rangle$ the standard pairing between $X(T)$ and the group of cocharacters. Let $\Phi \subset X(T)$ denote the set of roots and $\Phi_+ \subset \Phi$ the set of positive roots with respect to the choice of $B$. We denote by $\rho$ the half sum of all the positive roots. The set of simple roots in $\Phi_+$ is denoted by $S$. We let $\mc{W}=N(T)/T$ be the Weyl group of $G$, and $w_0 \in W$ its longest element.

For $I\subset S$, consider the standard parabolic subgroup $P:=P_I \subset G$. We have a Levi decomposition $P_I= L_I \ltimes U_I$, where $U_I$ is the unipotent radical of $P$ and $L:=L_I$ is reductive. Let $\mc{W}_I$ be the subgroup generated by the reflections $s_\al$ with $\al \in I$, and $w_I$ the longest element in $\mc{W}_I$. We choose the set $\mc{W}^I$ of representatives of the cosets of $\mc{W}/\mc{W}_I$ as
\begin{equation}\label{eq:weylrep}
\mc{W}^I=\{w\in \mc{W} \, | \, w(\al) \in \Phi_+, \mbox{ for all } \al \in I\}.
\end{equation}
We have the Bruhat decomposition of $G$ into $B\times P$-orbits (see \cite[Section II.13]{jantzen}):
\[G  \, = \, \bigcup_{w \in \mc{W}^I} BwP.\]
For $w \in \mc{W}^I$, we put $U(w):=U \cap w U^- w^{-1}$, where $U^-$ is the opposite unipotent radical. The multiplication map induces an isomorphism of $U(w)$-varieties (see \cite[Section II.13.8]{jantzen})
\begin{equation}\label{eq:opencell}
U(w) \times P \xrightarrow{\cong} BwP, \qquad (u,p) \, \mapsto \, u w p.
\end{equation}
We denote by $X(w)_P$ the Schubert variety that is the image of $\ol{BwP}$ under the locally trivial projection $G\to G/P$. For $P=B$, we write $X(w):=X(w)_B$.

\subsection{Cohomology of homogeneous bundles}\label{sec:cohomo}

For any representation $M$ of $P$, we denote by $\mc{V}(M)$ the sheaf of sections of the homogeneous vector bundle $G\times_P M$. For $\la \in X(T)$, we put $\mc{L}(\la):=\mc{V}( \kk_{-\la})$, where $\kk_{-\la}$ is the $1$-dimensional representation of $B$.

A weight $\la \in X(T)$ is dominant if $\langle \la , \al^\vee \rangle \geq 0$, for all simple roots $\al \in S$. The set of dominant weights is denoted by $X(T)_+$. For $\la \in X(T)_+$, we call the space of sections
\[\nabla_G(\la):=H^0(G/B, \mc{L}(\la)),\]
a \emph{dual Weyl module}. It has lowest weight $-\la$ and highest weight $-w_0 \cdot \la$. The module $\Delta_G(\la) = \nabla_G(\la)^*$ is called a \emph{Weyl module}, that has a non-zero highest weight vector of weight $\la$, and this generates $\Delta_G(\la)$ as a $G$-module. It is known that $\nabla_G(\la)$ has a unique simple submodule, of highest weight $-w_0 \cdot \la$. 

When $\cha \kk = p >0$ and $e\geq 1$ is an integer such that $(p^e-1)\rho$ is a weight of $G$, we denote by $\St_e = \nabla_G((p^e-1)\rho)$ the $e^{\op{th}}$ Steinberg module, and put $\St:=\St_1$. The assumption is superficial as we can always replace $G$ by $\op{rad} G \times \tilde{G}$, where $\op{rad} G$ denotes the radical of $G$ and $\tilde{G}$ the universal cover of $[G,G]$, and $(p^e-1)\rho$ is a weight of $\op{rad} G \times \tilde{G}$ for all $e\geq 1$.

Let $P=P_I$ be a parabolic subgroup. For $\la \in X(T)_+$, put $\V(\la):= \V(\nabla_L(\la))$ (here $U_I$ acts trivially on $\nabla_L(\la)$). The quotient map $\pi: G/B \to G/P$ induces a quasi-isomorphism
\begin{equation}\label{eq:pushfiber}
\mathbf{R}\pi_* \mc{L}(\la) \cong \V(\la).
\end{equation}

By abuse of notation, we use the same notation for the respective bundles on Schubert varieties that are obtained by restriction. We record the following result.

\begin{lemma}\label{lem:surj}
Let $\la \in X(T)_+$, and $w \in \mc{W}^I$. For all $i\geq 0$ we have $H^i(X(w)_P, \V(\la))  \cong H^i(X(w\cdot w_I), \, \mc{L}(\la))$, and the map induced by restriction is surjective:
\[H^i(G/P, \, \V(\la)) \to H^i(X(w)_P, \V(\la)).\]
Moreover, $H^i(G/P, \, \V(\la))=0$ for $i>0$.
\end{lemma}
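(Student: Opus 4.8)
The plan is to push every cohomology group in sight down to $G/B$, or to the ordinary Schubert variety $X(ww_I)\subset G/B$, by means of the quasi-isomorphism (\ref{eq:pushfiber}) and base change along $\pi\colon G/B\to G/P$, and then to quote Kempf vanishing together with the standard vanishing and surjectivity theorems for line bundles on Schubert varieties. The first step is the combinatorial identification $\pi^{-1}(X(w)_P)=X(ww_I)$. Since $w\in\mc{W}^I$ we have $\ell(wv)=\ell(w)+\ell(v)$ for all $v\in\mc{W}_I$, so $BwP=\bigsqcup_{v\in\mc{W}_I}BwvB$, whence $\ol{BwP}=\ol{Bww_IB}$ because $ww_I$ dominates every $wv$ in the Bruhat order (see \cite[Section II.13]{jantzen}). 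As $\ol{BwP}$ is right $P$-stable, $\pi^{-1}(X(w)_P)=\ol{BwP}/B$ set-theoretically; and since $\pi$ is smooth the preimage is reduced, so $\pi^{-1}(X(w)_P)=X(ww_I)$ scheme-theoretically. Writing $j\colon X(ww_I)\hookrightarrow G/B$, $i\colon X(w)_P\hookrightarrow G/P$ and $\pi_w\colon X(ww_I)\to X(w)_P$ for the evident maps, this exhibits a Cartesian square in which $\pi$, hence $\pi_w$, is flat — in fact a locally trivial bundle with fiber $L/(B\cap L)$.

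Next I would restrict (\ref{eq:pushfiber}) to $X(w)_P$. Flatness of $\pi$ makes the Cartesian square Tor-independent, so derived base change applies; since $\V(\la)$ and $\mc{L}(\la)$ are locally free, applying $\mathbf{L}i^*$ to $\mathbf{R}\pi_*\mc{L}(\la)\cong\V(\la)$ gives
\[
\V(\la)|_{X(w)_P}\;\cong\;\mathbf{R}(\pi_w)_*\bigl(\mc{L}(\la)|_{X(ww_I)}\bigr).
\]
(Equivalently, this is (\ref{eq:pushfiber}) for the flag bundle $\pi_w$, using Kempf vanishing for $L$ on the fibers.) The Leray spectral sequence for $\pi_w$ then yields $H^i(X(w)_P,\V(\la))\cong H^i(X(ww_I),\mc{L}(\la))$ for all $i\ge0$, and the same computation over $G/P$ itself gives $H^i(G/P,\V(\la))\cong H^i(G/B,\mc{L}(\la))$; in degree $0$ these are the tautological identifications $\Gamma\circ\pi_*=\Gamma$, hence compatible with the restriction maps.

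Finally, Kempf's vanishing theorem \cite[II.4]{jantzen} gives $H^i(G/B,\mc{L}(\la))=0$ for $i>0$, so $H^i(G/P,\V(\la))=0$ for $i>0$, which is the last assertion; and its Schubert-variety analogue — cohomology vanishing for dominant line bundles on Schubert varieties, via Frobenius splitting when $\cha\kk>0$ and classically when $\cha\kk=0$, see \cite{schubsplit}, \cite{schubfreg} — gives $H^i(X(w)_P,\V(\la))\cong H^i(X(ww_I),\mc{L}(\la))=0$ for $i>0$. Thus for $i>0$ the restriction map $H^i(G/P,\V(\la))\to H^i(X(w)_P,\V(\la))$ has zero target and is trivially surjective, whereas for $i=0$ it is identified with the restriction $H^0(G/B,\mc{L}(\la))\to H^0(X(ww_I),\mc{L}(\la))$, which is surjective because $G/B$ is Frobenius split compatibly with all its Schubert subvarieties (resp.\ by the classical argument when $\cha\kk=0$); see \cite{schubsplit}.

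The one point I expect to require genuine care is the base-change step: checking that $\pi^{-1}(X(w)_P)$ is reduced and scheme-theoretically equal to $X(ww_I)$, and that derived base change along the closed immersion $i$ is legitimate — both handled by the smoothness, in particular flatness, of $\pi$ — together with verifying that the resulting isomorphisms are compatible with the restriction maps, which is automatic in degree $0$ and is all that the surjectivity claim requires. Everything else is Bruhat-order combinatorics and off-the-shelf facts about the cohomology of line bundles on $G/B$ and its Schubert varieties.
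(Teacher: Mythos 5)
Your proof is correct and follows essentially the same route as the paper: identify $\pi^{-1}(X(w)_P)=X(ww_I)$, apply flat base change in the Cartesian square to reduce to cohomology of $\mc{L}(\la)$ on $X(ww_I)$, then invoke Kempf vanishing together with the standard vanishing/surjectivity results for dominant line bundles on Schubert varieties (which the paper packages as a single citation to \cite[Theorem 2]{projnormschub}). You simply spell out the Bruhat combinatorics and the base-change step in more detail than the paper does.
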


\begin{proof}
We have $\pi^{-1} (X(w)_P) = X(ww_I)$ \cite[Section 13.8]{jantzen} and a Cartesian square
\[
\xymatrix@R-0.8pc{ 
G/B \ar[r]^{\pi} & G/P \\
X(ww_I) \ar[u] \ar[r]^{\,\,\pi^w} & X(w) \ar[u]
}\] 
where the vertical maps are inclusions. As $\pi$ is proper and flat, by a base change argument (see \cite[Corollary 12.9]{hartshorne}) and (\ref{eq:pushfiber}) we get $\mathbf{R}\pi_*^w \, \mc{L}(\la) \cong \V(\la)$. This shows that $H^i(X(w)_P, \V(\la))  \cong H^i(X(w w_I), \mc{L}(\la))$ for all $i\geq 0$. The rest of the claims now follows from the diagram above using \cite[Theorem 2]{projnormschub}.
\end{proof}

\subsection{Classes of singularities}\label{sec:fsing}

When $\cha \kk = p > 0$, for a $\kk$-space $V$ and $e\in \bb{Z}_{\geq 0}$ we denote by $V^{(e)}$ the abelian group $V$ with the new $\kk$-space structure $c \cdot v := c^{1/p^e} \cdot v$. When $V$ is a module over an algebraic group $G$, then $V^{(e)}$ also has a $G$-module structure \cite[Section I.9.10]{jantzen}. If $A$ is a $\kk$-algebra, then so is $A^{(e)}$ by using the same multiplicative structure. 

We call a domain $A$ \textit{strongly} $F$\textit{-regular} if for every non-zero $c\in A$ there exists $e>0$ such that the $A^{(e)}$-map $cF^{e}: A^{(e)} \to A$ given by $x \mapsto c x^{p^e}$ is $A^{(e)}$-split.

As we do not need it for our purposes, we refer the reader to \cite{hh} for the definition of $F$-\textit{rational} rings (see (\ref{eq:f-sing}) below for some of its important properties).

When $\cha \kk =0$, an algebraic variety $X$ has \emph{rational singularities}, if for some (hence, any) resolution of singularities $f: Z\to X$ (i.e.\ $Z$ is smooth, and $f$ proper and birational), the natural map $\mc{O}_X \to \mathbf{R}f_* \mc{O}_Z$ is a (quasi-)isomorphism. Further, we say a ring $A$ is of \textit{strongly} $F$\textit{-regular type} if there exist some subring $R$ of $\kk$ which is of finite type over $\bb{Z}$, and some $R$-algebra $A_R$ which is flat of finite type over $R$, such that $A_R \oo_R \kk \cong A$ and for the closed points $\mathfrak{m}$ in a dense open subset of $\op{Spec} R$, the ring $A_R \oo_R R/\mathfrak{m}$ is strongly $F$-regular. 

An affine variety $X$ is $F$-rational (resp.\ strongly $F$-regular or of strongly $F$-regular type) if $\kk[X]$ is so. We have the following implications (where CM stands for Cohen--Macaulay):
\begin{equation}\label{eq:f-sing}
\begin{aligned}
\cha \kk = 0: & \mbox{ regular}  \Rightarrow \mbox{strongly }F\mbox{-regular type} \! \Rightarrow \mbox{rational sing.} \Rightarrow \mbox{normal, CM}; \\
\cha \kk > 0: & \mbox{ regular}  \Rightarrow \, \mbox{strongly }F\mbox{-regular} \,\, \Longrightarrow \, F\mbox{-rational} \Rightarrow \mbox{normal, CM.}
\end{aligned}
\end{equation}
Furthermore, $F$-rationality implies pseudo-rationality \cite{smith} and rational singularities in positive characteristic as defined in \cite{kovacs2}. When $\cha \kk = 0$, a ring has log terminal singularities if and only if it
is of strongly $F$-regular type and $\bb{Q}$-Gorenstein (see \cite{hawa}).

Now let $A$ be a $G$-algebra and $\cha \kk = p >0$. We can assume that $(p^e-1)\rho$ is a weight of $G$ for $e\geq 1$ (otherwise replace $G$ by $\op{rad} G \times \tilde{G}$). Following \cite[Section 4]{hashiunip}, we say that $A$ is $G$-$F$-\textit{pure} if there exists some $e \geq 1$ such that the map $\op{id} \oo F^e : \St_e \oo A^{(e)} \to \St_e \oo A$ splits as a $(G, A^{(e)})$-linear map.

Now we study the coordinate ring of $\ol{BwB} \subset G$, where $w\in \mc{W}$. Consider the rational $B\times T$-subalgebra of $\kk[\ol{BwB}]^{U}$ consisting of dominant $T$-weight spaces
\[\kk[\ol{BwB}]^{U}_+ := \bigoplus_{\la \in X(T)_+} \kk[\ol{BwB}]^{U}_{\la}.\]

Consider the section ring $C(X(w)):= \bigoplus_{\la \in X(T)_+} \! H^0(X(w), \mc{L}(\la))$.

\begin{lemma}\label{lem:cone}
For any $w\in \mc{W}$, we have an isomorphism of $B\times T$-algebras
\[\kk[\ol{BwB}]^{U}_+ \cong C(X(w)).\]
As a consequence, the algebra $\kk[\ol{BwB}]^{U}_+$ is finitely generated, and strongly $F$-regular when $\cha \kk > 0$ (resp.\ of strongly $F$-regular type when $\cha \kk =0$).
\end{lemma}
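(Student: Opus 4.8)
The plan is to identify $\kk[\ol{BwB}]$ via the Bruhat cell structure, using the isomorphism (\ref{eq:opencell}) and its closure. Concretely, $\ol{BwB}$ is a $B\times B$-stable closed subvariety of $G$, so $\kk[\ol{BwB}]$ carries a $B\times B$-action; taking $U$-invariants on the right factor yields a $B\times T$-algebra, and further decomposing into $T$-weight spaces picks out $\kk[\ol{BwB}]^U_+$ as the sum of the dominant ones. The first step is to relate this to sections of line bundles on the Schubert variety $X(w)=X(w)_B$. For a dominant weight $\la$, functions on $G$ that are $U$-semiinvariant of right-weight $\la$ are exactly the global sections $H^0(G/B,\mc{L}(\la))$ pulled back along $G\to G/B$ — this is the standard description $\kk[G]^U_\la \cong \nabla_G(\la)$ as a $B$-module (left translation). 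Restricting functions from $G$ to $\ol{BwB}$ corresponds, under $G\to G/B$, to restricting sections from $G/B$ to the Schubert variety $X(w)$, the image of $\ol{BwB}$. The key input making this clean is that restriction $H^0(G/B,\mc{L}(\la))\to H^0(X(w),\mc{L}(\la))$ is surjective, which is exactly the $P=B$ case of Lemma \ref{lem:surj} (quoting \cite[Theorem 2]{projnormschub}), together with the normality of $\ol{BwB}$ so that $\kk[\ol{BwB}]$ is the full ring of functions and no sections are lost. Assembling these weight-by-weight gives the $B\times T$-algebra isomorphism $\kk[\ol{BwB}]^U_+ \cong \bigoplus_{\la\in X(T)_+} H^0(X(w),\mc{L}(\la)) = C(X(w))$.

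For the consequences: finite generation of $C(X(w))$ is a standard fact — $X(w)$ is projectively normal in every (very ample) embedding by $\mc{L}(\la)$ and the multi-section ring over the dominant cone is finitely generated (again \cite{projnormschub} and the theory of multihomogeneous Cox-type rings of Schubert varieties). The $F$-regularity statement is the substantive point. When $\cha\kk>0$, one wants that $C(X(w))$ is strongly $F$-regular. The approach is to invoke the known results on Frobenius splitting of Schubert varieties: $X(w)$ is Frobenius split, compatibly with its boundary, with splitting along an anticanonical-type divisor coming from $\partial X(w)$ plus a multiple of $\mc{L}(\rho)$-type divisors — this is the $B$-canonical / Mehta--Ramanathan--Ramanan splitting refined in \cite{schubsplit}, \cite{schubfreg}, \cite{smith2}. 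Such a splitting along an ample divisor forces the total section ring $C(X(w))$ to be strongly $F$-regular: a Frobenius splitting that is nondegenerate along an ample effective divisor upgrades to strong $F$-regularity of the cone (this is precisely the mechanism in \cite{smith2}, \cite{schubfreg}). In characteristic zero one spreads out $X(w)$ and the splitting over a finitely generated $\bb{Z}$-subalgebra of $\kk$ and reduces mod $p$ for $p$ in a dense open set, concluding strongly $F$-regular type; equivalently, $C(X(w))$ then has rational singularities since it is the section ring of the Frobenius-split, hence globally $F$-regular in the reduction, Schubert variety.

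I expect the main obstacle to be the first step — establishing the algebra isomorphism cleanly — rather than the $F$-regularity, which is essentially a citation. The subtlety is that $\ol{BwB}$ is typically singular and its ideal in $G$ must be controlled; one needs that $\ol{BwB}$ is normal so that its coordinate ring is the intersection of the local rings and the restriction map on $U$-invariants is computed by restricting sections on $G/B$ with no extra kernel, and one needs compatibility of the $T$-weight decomposition with restriction. Normality of $\ol{BwB}\subset G$ (equivalently of the preimage of $X(w)$ under $G\to G/B$, which is a line-bundle-type total space over $X(w)$) follows from normality of Schubert varieties, and surjectivity of section restriction is Lemma \ref{lem:surj} with $P=B$; once these are in hand the identification is formal. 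A minor care point is keeping track of the left $B$- versus right $T$-actions and the sign conventions ($\mc{L}(\la)=\V(\kk_{-\la})$) so that dominant right-weight $\la$ matches $H^0(\mc{L}(\la))$ rather than $H^0(\mc{L}(w_0\la))$, but this is bookkeeping, not a genuine difficulty.
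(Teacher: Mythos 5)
Your overall strategy matches the paper's: realize $\kk[\ol{BwB}]^U_+$ as the multi-section ring $C(X(w))$ using that $\ol{BwB}$ is the preimage of $X(w)$ under $G\to G/B$, then invoke global $F$-regularity of Schubert varieties. But the execution has two issues worth flagging.

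First, your argument for the algebra isomorphism has its logic backwards. You treat the surjectivity $H^0(G/B,\mc{L}(\la))\to H^0(X(w),\mc{L}(\la))$ (Lemma \ref{lem:surj}) together with normality of $\ol{BwB}$ as ``the key input'' establishing $\kk[\ol{BwB}]^U_\la\cong H^0(X(w),\mc{L}(\la))$. That is not what proves the identification. The identification is intrinsic: $\ol{BwB}\to X(w)$ is the pullback of the principal $B$-bundle $G\to G/B$, so $U$-semiinvariant functions on $\ol{BwB}$ of right-weight $\la$ descend exactly to sections of $\mc{L}(\la)$ on $X(w)$ — no appeal to surjectivity from $G$ or to normality of $\ol{BwB}$ is needed here, and in fact taking $U$-invariants is not exact, so you cannot cheaply conclude the restriction $\kk[G]^U_\la\to\kk[\ol{BwB}]^U_\la$ is onto without first knowing the intrinsic description. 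Surjectivity of section restriction is used later, and for a different purpose: it yields $C(X(w_0))\twoheadrightarrow C(X(w))$, which together with Grosshans finiteness of $\kk[G]^U$ gives finite generation. The paper reaches $\kk[\ol{BwB}]^U_+\cong C(X(w))$ by tensoring with the semigroup ring $\kk[\Gamma]$ ($\Gamma=-X(T)_+$) and invoking the Popov--Grosshans transfer principle (Lemma \ref{lem:transfer}); your weight-by-weight route is equivalent in content, but you should state the principal-bundle descent as the load-bearing step rather than burying it under a misattributed citation.

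Second, the $F$-regularity step is a genuine gap in the references you cite. What \cite{smith2} and \cite{schubfreg} give, via Frobenius splitting along an ample divisor, is strong $F$-regularity of the single-variable section ring $\bigoplus_{n\ge0}H^0(X(w),\mc{L}(\la)^{\otimes n})$ for an \emph{ample} $\mc{L}(\la)$. But $C(X(w))=\bigoplus_{\la\in X(T)_+}H^0(X(w),\mc{L}(\la))$ is a multi-section ring over the whole dominant cone, and for small $X(w)$ most of these line bundles are semi-ample but not ample. Passing from global $F$-regularity of $X(w)$ to strong $F$-regularity of this multi-cone requires an additional argument; the paper uses Hashimoto's results — quasi-$F$-regularity of the full $X(T)$-graded section ring $\bigoplus_{\la\in X(T)}H^0(X(w),\mc{L}(\la))$ \cite[Theorem 2.6(4)]{hashisurjgraded}, descent to the dominant summand $C(X(w))$ \cite[Lemma 2.4]{hashisurjgraded}, and then finite generation to upgrade quasi to strong. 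Your proposal elides this step. The characteristic-zero and finite-generation parts of your sketch are, modulo the above, on the right track and close to what the paper does.
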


\begin{proof}
Let $\Gamma := - X(T)_+$ and consider the semigroup ring $\kk[\Gamma]$, which is naturally a subalgebra of $\kk[T]$. We have an isomorphism of $B\times T$-algebras
\[(\kk[\ol{BwB}]^U \oo \kk[\Gamma])^T \cong C(X(w)).\]
On the other hand, we have 
\[(\kk[\ol{BwB}]^U \oo \kk[\Gamma])^T=(\kk[\ol{BwB}]^U_+ \oo \kk[\Gamma])^T =(\kk[\ol{BwB}]^U_+ \oo \kk[T])^T \cong \kk[\ol{BwB}]^U_+,\]
where the second equality follows from the decomposition $\kk[T] \cong \bigoplus_{\la \in X(T)} \kk_{\la}$ as $T$-modules, and the last isomorphism from Lemma \ref{lem:transfer}.

We now show that $C(X(w))$ is finitely generated. By \cite[Theorem 16.2]{grossbook}, $\kk[G]^U$ is finitely generated, and therefore so is $C(X(w_0))=(\kk[G]^U\oo \kk[\Gamma])^T$. By Lemma \ref{lem:surj}, we see that the map $C(X(w_0)) \to C(X(w))$ induced by restriction is onto, hence $C(X(w))$ is finitely generated (alternatively, this follows also from \cite[Theorem 2]{projnormschub}).

Let $\cha \kk > 0$. The (not necessarily noetherian) algebra $\bigoplus_{\la \in X(T)} H^0(X(w), \mc{L}(\la))$ is quasi-$F$-regular, by \cite[Theorem 2.6 (4)]{hashisurjgraded} and the global $F$-regularity of Schubert varieties in the sense of \cite{smith2}, see \cite{schubfreg}, \cite{hashischub}. Therefore, the algebra $C(X(w))$ is also quasi-$F$-regular by \cite[Lemma 2.4]{hashisurjgraded}. The latter is finitely generated, so strongly $F$-regular (see \cite[Section 2.1]{hashisurjgraded}).

For ring $R$, consider the $R$-algebra $C(X(w)_{R})= \bigoplus_{\la \in X(T)_+} \! H^0(X(w)_{R}, \mc{L}(\la)_{R})$. We have $C(X(w)_{\kk'}) = C(X(w)_{\bb{Z}}) \otimes_\bb{Z} k'$ (see \cite[Section II.14.15]{jantzen}), for any field $\kk'$, and $C(X(w)_{\bb{Z}})$ is flat and finitely generated over $\bb{Z}$ (e.g.\ from \cite[Sections II.14.1 and II.14.21]{jantzen}). By \cite[Theorem 5.5]{hh}, $C(X(w)_{\kk'})$ is strongly $F$-regular for a perfect field $\kk'\subset \kk$. This shows that when $\cha \kk = 0$, $C(X(w))$ is of strongly $F$-regular type.
\end{proof}

For the remainder of the subsection, we assume that $\cha \kk >0$.

\begin{lemma}\label{lem:cartanalg}
Let $\Gamma \subset X(T)_+$ be a finitely generated semigroup, and $A= \bigoplus_{\la \in \Gamma} A_\la$ a $\Gamma$-graded integral domain with a $G$-action such that $A_\la\cong \nabla_G(\la)$. Then $A$ is $G$-$F$-pure.
\end{lemma}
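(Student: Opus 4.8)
\emph{Goal and framework.} The task is to produce, for some $e\geq 1$, a $(G,A^{(e)})$-linear splitting of $\op{id}_{\St_e}\oo F^e\colon\St_e\oo A^{(e)}\to\St_e\oo A$; recall that by the convention of this subsection $(p^e-1)\rho$ is a weight of $G$ for every $e$. Two structural facts: $A$ has a good filtration, being a direct sum of dual Weyl modules; and $A^{U}=\bigoplus_{\la\in\Gamma}\nabla_G(\la)^{U}\cong\kk[\Gamma]$ is the semigroup algebra of $\Gamma$ (since $\dim\nabla_G(\la)^{U}=1$ and products of highest weight vectors are nonzero), so $A^U$ is in particular $F$-split. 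Moreover $A$ is module-finite over the subalgebra $A^{p^e}=F^e(A^{(e)})$, so all the $A^{(e)}$-modules below are finitely generated.

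\emph{Degree-wise analysis.} I would first understand $\op{id}_{\St_e}\oo F^e$ grading by grading. In the degree where $F^e$ carries $A^{(e)}_\mu=\nabla_G(\mu)^{(e)}$ into $A_{p^e\mu}=\nabla_G(p^e\mu)$, the map is $\op{id}_{\St_e}$ tensored with the Frobenius embedding $\nabla_G(\mu)^{(e)}\into\nabla_G(p^e\mu)$, which by the Steinberg tensor identity is an inclusion
\[\St_e\oo\nabla_G(\mu)^{(e)}\;\cong\;\nabla_G\!\big((p^e-1)\rho+p^e\mu\big)\;\into\;\St_e\oo\nabla_G(p^e\mu)\]
of the dual Weyl module whose highest weight $(p^e-1)\rho+p^e\mu$ is the unique, multiplicity-one highest weight of $\St_e\oo\nabla_G(p^e\mu)$. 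Hence every composition factor $L(\nu')$ of the cokernel satisfies $\nu'<(p^e-1)\rho+p^e\mu$, so $\Ext^1_G$ against $\nabla_G((p^e-1)\rho+p^e\mu)$ vanishes and the inclusion splits over $G$. Summing over degrees, $0\to\St_e\oo A^{(e)}\to\St_e\oo A\to C\to 0$ splits as a sequence of $G$-modules, and $\St_e\oo A^{(e)}\cong\bigoplus_\mu\nabla_G((p^e-1)\rho+p^e\mu)$ has a good filtration.

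\emph{Upgrading the splitting.} To obtain $A^{(e)}$-linearity I would pass through the Frobenius kernel $G_e$. Since $\St_e\big|_{G_e}$ is injective (equivalently projective) over $G_e$ and $F^e(A^{(e)})\subseteq A^{G_e}$, the map $\op{id}_{\St_e}\oo F^e$ admits a $(G_e,A^{(e)})$-linear splitting — this is the standard mechanism behind $G$-$F$-purity. One then averages such a splitting over $G^{(e)}=G/G_e$; the obstruction is a class in $H^1\!\big(G^{(e)},\Hom_{(G_e,A^{(e)})}(C,\St_e\oo A^{(e)})\big)$, whose vanishing yields a $(G,A^{(e)})$-linear splitting, i.e. $G$-$F$-purity of $A$.

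\emph{The main obstacle.} The essential point — and the technical heart of the argument — is precisely the vanishing of this $H^1$, which is where the good-filtration machinery for $(G,A^{(e)})$-modules is needed and where $G$-$F$-purity genuinely differs from ordinary $F$-purity (cf.\ Hashimoto's treatment in \cite{hashiinv}, \cite{hashiunip}). The ingredients one has to exploit are: the good filtration of $\St_e\oo A^{(e)}$ noted above; the Donkin--Mathieu theorem that tensor products of modules with good filtrations have good filtrations (so that $\St_e\oo A$, and hence the pertinent $\Hom$-modules, are well behaved as $G^{(e)}$-modules); and the finite generation of $\Gamma$, which reduces the bookkeeping to finitely many generating degrees. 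Everything else is formal, and since the argument is valid for any admissible $e$, exhibiting a single such $e$ suffices.
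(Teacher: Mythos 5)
Your degree-wise analysis is correct as far as it goes: the Steinberg tensor identity does realize $\St_e\oo A^{(e)}$ as $\bigoplus_\mu\nabla_G((p^e-1)\rho+p^e\mu)$, and since this is the unique top weight of $\St_e\oo\nabla_G(p^e\mu)$ with multiplicity one, each graded piece of the inclusion $\op{id}_{\St_e}\oo F^e$ splits as a map of $G$-modules. But this only produces \emph{some} $G$-module retraction of the inclusion, and a direct sum of $G$-splittings chosen independently in each degree has no reason to commute with multiplication by $A^{(e)}$. That $A^{(e)}$-linearity is exactly the content of the lemma, and your proposal stops at the point where it becomes the issue: you frame it as an obstruction class in $H^1\big(G^{(e)},\Hom_{(G_e,A^{(e)})}(C,\St_e\oo A^{(e)})\big)$, assert that good-filtration machinery should kill it, and call the rest formal. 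It is not: $G^{(e)}\cong G$ is a reductive group in positive characteristic, $H^1(G,-)$ does not vanish in general, and to run the obstruction argument you would need to establish that the relevant $\Hom$-module (a $\Hom$ over the non-reductive scheme $G_e$ and over the non-noetherian-looking $A^{(e)}$) has a good filtration as a $G$-module --- none of which is argued. The step ``since $\St_e|_{G_e}$ is projective, the map admits a $(G_e,A^{(e)})$-linear splitting'' is also asserted rather than derived.

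The paper avoids this entirely by never choosing an abstract splitting and then trying to rigidify it. Instead (with $e=1$) it writes down one specific map
\[
\phi\colon \St\oo A \twoheadrightarrow \bigoplus_{\la\in\Gamma}\St\oo\nabla_G(p\la)\twoheadrightarrow \bigoplus_{\la\in\Gamma}\nabla_G(p(\la+\rho)-\rho)\xrightarrow{\ \cong\ }\St\oo A^{(1)},
\]
where the first arrow is the projection onto the ``Frobenius-divisible'' degrees, the second is multiplication of sections on $G/B$, and the third is the inverse of Andersen's Steinberg isomorphism $\St\oo\nabla_G(\la)^{(1)}\xrightarrow{\cong}\nabla_G(p(\la+\rho)-\rho)$ --- itself given by multiplication of sections. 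Because \emph{every} arrow is induced by multiplication in the section ring of $G/B$ (after normalizing the product on $A$ to be section multiplication, which is permissible), associativity and commutativity of that product make $\phi\circ(\op{id}_{\St}\oo F)=\op{id}$ and the $A^{(1)}$-linearity of $\phi$ checkable on graded pieces by a single commutative square. In short: your proposal correctly isolates the Steinberg mechanism but leaves the crucial $A^{(e)}$-linearity as an unproven obstruction-theoretic wish; the actual proof makes that linearity structural by building $\phi$ out of the multiplicative geometry of $G/B$ rather than out of abstract Ext-vanishing.
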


\begin{proof}
The proof follows closely that of \cite[Lemma 3]{hashiunipshort}. We can assume that $G=\tilde{G} \times \op{rad} G$. Further, we can assume that the product $\nabla_G(\la) \oo \nabla_G(\mu) \to \nabla_G(\la+\mu)$ in $A$ is given by multiplication of sections of the corresponding line bundles on $G/B$, as seen in the proof of  \cite[Lemma 5.6]{hashisurjgraded}. We denote by $\phi$ the composition of $G$-maps
\[\phi \colon \St \oo A \twoheadrightarrow \bigoplus_{\la \in \Gamma} \St \oo \nabla_G(p\la)  \twoheadrightarrow \bigoplus_{\la \in \Gamma} \nabla_G(p(\la+\rho)-\rho) \xrightarrow{\cong} \St \oo A^{(1)},\]
where the first map is given by projection, the second by multiplication (see \cite[Theorem 1]{projnormschub}), and the third by the inverse of $G$-isomorphism $\St \oo \nabla_G(\la)^{(1)} \xrightarrow{\cong} \nabla_G(p(\la+\rho) - \rho)$ induced also by multiplication of sections (see \cite[Theorem 2.5]{andersen}). Then $\phi$ gives the required splitting, since it is $A^{(1)}$-linear. The latter can be checked on the graded components, where it follows from the commutative diagram (with the obvious maps induced by multiplication):
\[\xymatrix@R-0.3pc@C-0.3pc{
\St \oo \nabla_G(p\la) \oo \nabla_G(\mu)^{(1)} \ar[r] \ar[d] & \nabla_G(p(\la+\rho)-\rho) \oo \nabla_G(\mu)^{(1)} \ar[r]^{\cong} \ar[d] & \St \oo \nabla_G(\la)^{(1)} \oo \nabla_G(\mu)^{(1)} \ar[d] \\
\St \oo \nabla_G(p(\la+\mu)) \ar[r] \quad & \quad \nabla_G(p(\la+\mu+\rho)-\rho)  \ar[r]^{\cong} \quad & \quad \St \oo \nabla_G(\la+\mu)^{(1)} 
}\]
\end{proof}

When $\Gamma$ is saturated, the algebra $A$ as above is strongly $F$-regular \cite[Lemma 5.6]{hashisurjgraded}.

\begin{corollary}\label{cor:lpure}
The algebra $\kk[G]^{U_I \times U}$ is strongly $F$-regular and $L$-$F$-pure.
\end{corollary}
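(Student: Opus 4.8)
The plan is to identify $\kk[G]^{U_I \times U}$ with an algebra of the type treated in the preceding lemmas, so that the two conclusions follow from the work already done. First I would recall that taking $U$-invariants of $\kk[G]$ and decomposing into $T$-weight spaces gives $\kk[G]^U = \bigoplus_{\la} \kk[G]^U_\la$, and that by the theory of the Bruhat filtration / the standard description of $\kk[G]$ as a $G\times G$-module (see \cite[Chapter 4]{grossbook}, or \cite[Section II.14]{jantzen}), one has $\kk[G]^U_\la \cong \nabla_G(\la)$ as a $G$-module for $\la \in X(T)_+$ and $\kk[G]^U_\la = 0$ otherwise; concretely $\kk[G]^U_+ := \kk[G]^U$ is the multiplication algebra $\bigoplus_{\la \in X(T)_+} \nabla_G(\la)$ with product induced by multiplication of sections on $G/B$. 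Then I would pass to $U_I$-invariants on the left: since $U_I$ is the unipotent radical of $P_I$ and $L_I$ is the Levi, taking $U_I$-invariants of $\nabla_G(\la)$ (as a left $L_I$-module) and keeping the $L_I$-dominant part picks out $\nabla_{L_I}(\la)$, so that $\kk[G]^{U_I \times U}$, restricted to the $L_I$-dominant weights, is a $\Gamma$-graded domain with $\Gamma = X(T)_+$ (which is a finitely generated \emph{saturated} semigroup) and graded pieces $\cong \nabla_{L_I}(\la)$.

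Once this identification is in place, $L$-$F$-purity is exactly \cite[Lemma \ref{lem:cartanalg}]{} applied with $G$ replaced by $L = L_I$ and $\Gamma = X(T)_+$: that lemma says any $\Gamma$-graded domain with an $L$-action and graded pieces $\nabla_L(\la)$ is $L$-$F$-pure, provided $\Gamma$ is a finitely generated semigroup. For strong $F$-regularity I would invoke the remark immediately following Lemma \ref{lem:cartanalg}, namely \cite[Lemma 5.6]{hashisurjgraded}, which upgrades $G$-$F$-purity to strong $F$-regularity precisely when the grading semigroup $\Gamma$ is saturated; here $X(T)_+$ is saturated inside $X(T)$ (it is cut out by the linear inequalities $\langle \la, \al^\vee\rangle \geq 0$), so the hypothesis is met. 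Alternatively, strong $F$-regularity of $\kk[G]^U$ itself already follows from the case $w = w_0$ of Lemma \ref{lem:cone} (via $\kk[G]^U_+ \cong C(X(w_0))$), and then one argues that passing to $U_I$-invariants preserves this; but the cleanest route is the direct appeal to the two cited results.

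The one point that requires a little care — and which I expect to be the main (modest) obstacle — is the claim that $\kk[G]^{U_I \times U}$, after discarding non-dominant $L$-weights, really is a graded domain with graded pieces $\nabla_{L_I}(\la)$ with the multiplication being the section-multiplication needed by Lemma \ref{lem:cartanalg}. This is where one uses that $\nabla_G(\la)^{U_I}$, as an $L_I$-module, decomposes with $\nabla_{L_I}(\la)$ appearing with multiplicity one in the top $L_I$-dominant layer, and that the multiplication on $\kk[G]^U$ is compatible with this — essentially the same bookkeeping as in \cite[Lemma 5.6]{hashisurjgraded} and in the proof of Lemma \ref{lem:cartanalg}, where the multiplication was identified with multiplication of sections of line bundles on $G/B$ (here on $L_I/B_{L_I}$). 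Modulo this standard identification, both assertions are immediate, and no further computation is needed.
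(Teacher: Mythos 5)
Your strategy is the same as the paper's: identify $\kk[G]^{U_I\times U}$ as a graded domain whose pieces are dual Weyl modules of $L$, indexed by a finitely generated saturated semigroup of $L$-dominant weights, and then invoke Lemma \ref{lem:cartanalg} for $L$-$F$-purity and \cite[Lemma 5.6]{hashisurjgraded} for strong $F$-regularity. The only issue is the identification of the graded pieces: the $\la$-component of $\kk[G]^{U_I\times U}$ (graded by right $T$-weight $\la\in X(T)_+$) is $\nabla_L(w_Iw_0\la)$, not $\nabla_L(\la)$. Taking left $U_I$-invariants of $\nabla_G(\la)$ picks out the \emph{lowest} $L$-layer — already in the rank-one case $\nabla_{SL_2}(n)^U = \kk_{-n}$, not $\kk_n$ — and the lowest $L$-weight $-\la$ corresponds to the $L$-dominant highest weight $w_Iw_0\la$. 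Your phrase ``keeping the $L_I$-dominant part'' is also a distraction: $\nabla_G(\la)^{U_I}$ is already an $L$-module, and no further truncation is performed. None of this breaks the argument, because $\{w_Iw_0\la : \la\in X(T)_+\}$ is still a finitely generated \emph{saturated} subsemigroup of the $L$-dominant weights (it is the image of the saturated cone $X(T)_+$ under the linear automorphism $w_Iw_0$ of $X(T)$), so re-indexing by $\mu = w_Iw_0\la$ puts the algebra exactly in the form required by Lemma \ref{lem:cartanalg} and \cite[Lemma 5.6]{hashisurjgraded}. With the twist inserted your proof coincides with the paper's.
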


\begin{proof}
The algebra $A=\kk[G]^{U_I \times U}$ has an $L\times T$-action so that we have a decomposition $A=\bigoplus_{\la \in \Gamma} \nabla_L(w_I w_0\la)$ as $L$-modules (e.g.\ see \cite[Theorem 3]{donkinunip}). Clearly, the set $\{w_I w_0 \la \}_{\la \in X(T)_+}$ forms a saturated subsemigroup in the semigroup of dominant weights of $L$. Hence, the claims follow by \cite[Lemma 5.6]{hashisurjgraded} and Lemma \ref{lem:cartanalg}, respectively.
\end{proof}

\subsection{Good filtrations}\label{sec:goodfil}

Take a (possibly infinite-dimensional) $G$-module $V$. Following Donkin \cite{donkin}, an ascending exhaustive filtration 
\[0 = V_0 \subset V_1 \subset V_2 \subset \dots\]
of $G$-submodules of $V$ is a \textit{good filtration} (resp.\ Weyl filtration) of $V$, if each $V_i/V_{i-1}$ is isomorphic to a dual Weyl module (resp.\ to a Weyl module). If $V$ has both good and Weyl filtrations, then we call $V$ \textit{tilting}.

Now let $w\in \mc{W}$. We say that a $B$-module $V$ has a $w$-\textit{excellent filtration}, if it has a $B$-module filtration with successive quotients isomorphic to some $H^0(X(w), \mc{L}(\la))$, with $\la \in X(T)_+$. This is a special type of excellent filtration, as defined in \cite[Definition 2.3.6]{vanderkallen}. Note that a good filtration of a $G$-module is a $w_0$-excellent filtration.

A finite-dimensional $G$-module $W$ \textit{good} if $\Sym_d W^*$ has a good filtration for all $d\geq 0$. In particular, in this case $W$ must have a Weyl filtration. Similarly, we call an affine $G$-variety (resp.\ $B$-variety) $X$ good (resp $w$-excellent) if $\kk[X]$ has a good (resp.\ $w$-excellent) filtration. 

If $X\subset Y$ is a closed $G$-stable subvariety, then we say that $(Y,X)$ is a \textit{good pair} whenever $Y$ is good and the defining ideal $I_X \subset \kk[Y]$ has a good filtration (see \cite[Section 1.3]{donkinconj}). In this case $X$ is automatically good.

If $\cha \kk = 0$, then all (pairs of) affine $G$-varieties are good. An important feature of good filtrations is the following result of Donkin \cite{donkin} and Mathieu \cite[Theorem 1]{mathieu}.

\begin{proposition}\label{prop:mathieu}
If $M$ and $N$ are $G$-modules with good filtrations, then $M\otimes_\kk N$ has a good filtration. In particular, if $X$ and $Y$ are good affine $G$-varieties, then so is $X\times Y$.
\end{proposition}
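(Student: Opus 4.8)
The plan is to prove the first, module-theoretic statement and then deduce the geometric consequence. For the module statement, I would not attempt to reprove the Donkin--Mathieu theorem from scratch (that is a deep result); instead I would cite it and organize the argument around reducing to the finite-dimensional case, since our modules are allowed to be infinite-dimensional (of countable dimension) and a good filtration is by definition an ascending exhaustive one. So first I would recall the standard cohomological criterion: a $G$-module $M$ has a good filtration if and only if $\op{Ext}^1_G(\Delta_G(\la), M) = 0$ for all $\la \in X(T)_+$ (equivalently $\op{Ext}^{i}_G(\Delta_G(\la),M)=0$ for all $i\ge 1$), and moreover the property of having a good filtration is closed under direct limits of $G$-submodules. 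Writing $M = \bigcup_i M_i$ and $N = \bigcup_j N_j$ as unions of finite-dimensional $G$-submodules, we have $M \oo_\kk N = \bigcup_{i,j} M_i \oo N_j$, so it suffices to treat $M, N$ finite-dimensional; but a finite-dimensional module with a good filtration is a finite successive extension of dual Weyl modules, so by the long exact sequence for $\op{Ext}$ it suffices to know that $\nabla_G(\la) \oo \nabla_G(\mu)$ has a good filtration for all dominant $\la,\mu$ — and that is precisely the content of \cite{donkin}, \cite[Theorem 1]{mathieu}.

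Next, for the geometric statement: if $X$ and $Y$ are good affine $G$-varieties, then $\kk[X\times Y] = \kk[X] \oo_\kk \kk[Y]$ as $G$-algebras (the Künneth isomorphism for affine varieties, which is $G$-equivariant for the diagonal action). By definition $X$ good means $\Sym_d(\kk[X]_1^*)$ — more precisely, each graded/filtered piece — hmm, here I should be careful: "good affine $G$-variety" was defined to mean $\kk[X]$ has a good filtration, so I just need that $\kk[X]\oo_\kk \kk[Y]$ has a good filtration, which is the first part of the Proposition applied with $M = \kk[X]$, $N = \kk[Y]$. Hence $\kk[X\times Y]$ has a good filtration and $X\times Y$ is good.

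The only genuine subtlety — which I would flag as the main obstacle — is the bookkeeping in the reduction to finite dimensions: one must check that if $M$ is the union of an ascending chain of finite-dimensional $G$-submodules each possessing a good filtration, then $M$ has a good filtration. This is standard (it follows from the vanishing of $\op{Ext}^1_G$ against Weyl modules being preserved under direct limits, together with the fact that each $\Delta_G(\la)$ is finitely generated so that $\op{Hom}$ and $\op{Ext}^1$ out of it commute with the relevant colimits), but it is the step where the "countable dimension" hypothesis from Section 2 is actually used, and it is worth one sentence rather than being swept away. Everything else — the Künneth isomorphism, the $\op{Ext}$ long exact sequence, extracting $\nabla_G(\la)\oo\nabla_G(\mu)$ as the base case — is formal once the Donkin--Mathieu input is invoked.
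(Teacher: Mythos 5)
Your proposal is correct and, like the paper, ultimately rests on the Donkin--Mathieu tensor product theorem as the key input; the paper offers no proof of this proposition at all, simply stating it as a result of Donkin and Mathieu with the citations you reproduce. Your reduction from infinite- to finite-dimensional modules via filtered unions is a valid (and standard) elaboration, though the ``countable dimension'' hypothesis you flag is not actually what makes it work --- any rational $G$-module is the filtered union of its finite-dimensional $G$-submodules, and $\Ext^1_G(\Delta_G(\lambda),-)$ commutes with filtered colimits in the comodule category regardless of cardinality.
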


We list some cases that imply the existence of good filtrations (see \cite[Section 4]{andjan}).

\begin{lemma}\label{lem:good}
Let $V,W$ be finite-dimensional $G$-modules.
\begin{enumerate}
\item If $\langle \chi+ \rho , \al^\vee \rangle \leq \cha \kk$ for all weights $\chi$ of $V$ and all $\al \in \Phi_+$, then $V$ has a good filtration.
\item If $V$ has a good filtration and $\cha \kk >i$, then $\bigwedge^i V$ and $\Sym_i V$ have good filtrations.
\item If $\bigwedge V$ and $ \bigwedge W$ have good filtrations, then $V\otimes W$ is good.
\item $\bigwedge V$ has a good filtration if and only if so does $\bigwedge V^*$ (i.e.\ $\bigwedge V$ is tilting).
\end{enumerate}
\end{lemma}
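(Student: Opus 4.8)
The plan is to deduce all four statements from known results on good filtrations, reducing everything to the simply connected case and citing the structural lemmas collected in \cite[Section 4]{andjan} together with Proposition \ref{prop:mathieu}. For (1), the plan is to recall that a dual Weyl module $\nabla_G(\la)$ has a good filtration automatically, and to use the criterion of \cite[Proposition 4.2]{andjan} (a form of the Andersen--Jantzen bound): if all weights $\chi$ of $V$ satisfy $\langle \chi + \rho, \al^\vee\rangle \le \cha \kk$ for every positive root $\al$, then the simple constituents of $V$ are ``small'' in the sense that the relevant $\Ext^1$ between the dual Weyl modules cutting them out vanishes, forcing $V$ to be a direct sum of dual Weyl modules, hence in particular to have a good filtration. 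I would phrase this as: order the weights of $V$ by height and build the filtration inductively, at each stage using that the top weight gives a $G$-submodule isomorphic to $\nabla_G(\la)$ whose complement again satisfies the hypothesis.

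For (2), I would use that $\bigwedge^i$ and $\Sym_i$ are direct summands (as $G$-modules, when $i! $ is invertible, i.e.\ $\cha\kk > i$) of the $i$-fold tensor power $V^{\oo i}$, which has a good filtration by Proposition \ref{prop:mathieu}; a direct summand of a module with a good filtration again has a good filtration by \cite[Proposition 1.2]{donkin} (the ``excellent filtration'' inheritance under summands). For (3), the idea is that if $\bigwedge V$ and $\bigwedge W$ both have good filtrations, then in particular $V$ and $W$ are tilting (a finite-dimensional module with a good filtration whose exterior algebra also has one is tilting, using (4) below), and one shows $\Sym_d(V^*\oplus W^*) = \bigoplus_{a+b=d}\Sym_a V^* \oo \Sym_b W^*$ has a good filtration for all $d$ by applying the Donkin--Mathieu theorem (Proposition \ref{prop:mathieu}) termwise, provided each $\Sym_a V^*$ and $\Sym_b W^*$ has a good filtration; the latter is exactly the content of \cite[Section 4]{andjan} for tilting modules (the symmetric powers of a tilting module have good filtrations in all characteristics). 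Finally for (4), the plan is to invoke the self-duality of the exterior algebra: $\bigwedge^i V^* \cong (\bigwedge^{n-i} V)^* \oo (\bigwedge^n V)^*$ where $n = \dim V$, and $\bigwedge^n V$ is a one-dimensional $G$-module (a character), so twisting by it preserves good filtrations; since a finite-dimensional module $M$ has a good filtration iff $M^*$ has a Weyl filtration, $\bigwedge V$ having a good filtration is equivalent to $\bigwedge V^*$ having one, i.e.\ $\bigwedge V$ is tilting.

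I expect the main obstacle to be part (3): unlike (1), (2), (4) which are essentially direct citations, (3) requires genuinely combining the tilting hypothesis on $V$ and $W$ separately with the Donkin--Mathieu tensor-product theorem, and one must be careful that it is the exterior algebras being tilting — not $V$, $W$ themselves — that is assumed, so the step ``$\bigwedge V$ tilting $\Rightarrow$ $\Sym_a V^*$ has a good filtration for all $a$'' must be justified (this is where characteristic-independence is delicate and where one genuinely uses results of Donkin and Mathieu rather than elementary arguments). The remaining parts I would dispatch quickly, citing \cite{donkin}, \cite{mathieu}, \cite{andjan} as appropriate.
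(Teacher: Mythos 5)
The paper gives no proof of this lemma at all; it is stated with a bare citation to \cite[Section 4]{andjan}, so your attempt is doing more work than the source. Your treatments of (1), (2) and (4) are essentially correct: (1) is the Andersen--Jantzen small-weight criterion forcing $V$ to be a direct sum of dual Weyl modules; (2) uses that when $\cha\kk > i$ the (anti)symmetrizer $\frac{1}{i!}\sum_\sigma (\pm 1)^\sigma\sigma$ realizes $\Sym_i V$ and $\bigwedge^i V$ as $G$-module direct summands of $V^{\oo i}$, combined with Donkin--Mathieu and the fact that a direct summand of a good-filtered module is good-filtered (via the $\Ext^1_G(\Delta(\la),-)=0$ characterization); (4) is the duality $\bigwedge^i V^* \cong \bigwedge^{n-i}V \oo (\bigwedge^n V)^*$ (note your displayed isomorphism has a stray dual on the first factor, but the argument is unaffected since twisting by the character $(\bigwedge^n V)^*$ is harmless).

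Part (3), however, contains a genuine error. The claim to be proved is that $V\oo W$ is \emph{good}, which by the paper's definition means that $\Sym_d\!\left((V\oo W)^*\right)=\Sym_d(V^*\oo W^*)$ has a good filtration for all $d$. You instead analyze $\Sym_d(V^*\oplus W^*)=\bigoplus_{a+b=d}\Sym_a V^*\oo \Sym_b W^*$, which would show that $V\oplus W$ is good, not $V\oo W$. The symmetric powers of a tensor product do not decompose in this binomial way, and the correct mechanism is the characteristic-free Cauchy filtration of Akin--Buchsbaum--Weyman/Boffi: $\Sym_d(V\oo W)$ carries a natural $G$-filtration whose associated graded is $\bigoplus_{|\la|=d} L_\la V\oo L_\la W$ with $L_\la$ the Schur functor. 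One then needs that $\bigwedge V$ (resp.\ $\bigwedge W$) having a good filtration forces $L_\la V$ (resp.\ $L_\la W$) to have one — a fact proved via the Koszul/ABW resolutions — after which Donkin--Mathieu gives good filtrations on each graded piece, hence on $\Sym_d(V\oo W)$; applying the same to $V^*, W^*$ (permissible by part (4)) gives the statement. Separately, your intermediate claim that \emph{symmetric powers of a tilting module have good filtrations in all characteristics} is also too strong as worded: $V$ being tilting does not by itself control $\Sym_a V$; what one actually uses is the exactness of the Koszul complex $0\to\bigwedge^a V\to\bigwedge^{a-1}V\oo V\to\cdots\to\Sym_a V\to 0$ together with the hypothesis that $\bigwedge V$, not merely $V$, is tilting.
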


We further need some basic results.

\begin{lemma}\label{lem:gr}
Let $f: M\to N$ be a $G$-module map. If $M$ has a good filtration and the induced map $M^{U} \to N^{U}$ is onto, then $N$ and $\ker f$ have good filtrations and $f$ is onto.
\end{lemma}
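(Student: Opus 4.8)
The plan is to reduce everything to a statement about good filtrations of dual Weyl modules, using the fact — standard in this circle of ideas and attributed to Donkin/Jantzen — that a $G$-module $M$ has a good filtration if and only if $\Ext^1_G(\Delta_G(\la), M) = 0$ for all $\la \in X(T)_+$, equivalently if and only if $\dim M^U_\la = \dim M - \dim(\text{kernel stuff})$... more precisely the key input is: for a dual Weyl module $\nabla_G(\mu)$ we have $\nabla_G(\mu)^U = \nabla_G(\mu)^U_\mu$ is one-dimensional (spanned by the highest weight vector), and a short exact sequence $0 \to K \to M \to N \to 0$ of $G$-modules with $M$ having a good filtration splits into good filtrations on $K$ and $N$ precisely when the connecting map vanishes, which is governed by $H^1(U, -)$ or equivalently by surjectivity of $M^U \to N^U$. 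So the first thing I would do is record the cohomological characterization: $M$ has a good filtration $\iff$ $H^1(G, M\otimes \nabla_G(\la)) = 0$ for all $\la$ (Donkin), and then the exact sequence of $\Ext$/cohomology applied to $0 \to \ker f \to M \to \im f \to 0$ and $0 \to \im f \to N \to \coker f \to 0$.

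Concretely, here is the skeleton. First, since $M$ has a good filtration, by Donkin's criterion $\Ext^1_G(\Delta_G(\la), M) = 0$ for all $\la$. Second, I claim the hypothesis that $M^U \to N^U$ is surjective forces $f$ itself to be surjective: indeed, let $N' = \im f$; then $(N/N')^U = 0$ would follow if I knew $N^U \to (N/N')^U$ were surjective, but that requires $H^1(U, N') = 0$ or a good-filtration input on $N'$, so this direction has to be handled carefully — probably one argues that $N$ need not have a good filtration a priori, so instead one works with the \emph{image}: set $I = \im f$, so $M^U \to I^U$ is onto (it factors $M^U \to I^U \hookrightarrow N^U$ and the composite $M^U \to N^U$ is onto, hence the image of $M^U$ in $N^U$ is all of $N^U$, but it also lands in $I^U$, so $I^U = N^U$ and in particular $I^U \to N^U$ is onto — but I still want $I = N$). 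The cleanest route: every $G$-module embeds in an injective (a product of $\nabla$'s — more precisely the coordinate ring $\kk[G]$ is an injective $G$-module and is a union of good-filtration modules), so $N$ embeds in some injective $E$ with a good filtration; then apply the argument to $M \to N \hookrightarrow E$ and deduce good filtrations, then descend. Actually the standard trick is: it suffices to treat the case $N = \kk[G]$-injective-summand, reduce to finite type, induct on composition length.

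So the key steps in order: (1) State/cite the cohomological criterion for good filtrations ($\Ext^1_G(\Delta_G(\la), -) = 0$), and the fact that for $\la$ dominant, $\Hom_G(\Delta_G(\la), N) \cong (N_\la)^{U}$ — i.e. the multiplicity of $\nabla_G(\la)$-type behavior is read off from highest-weight $U$-invariants. (2) From $0 \to \ker f \to M \xrightarrow{f} \im f \to 0$, since $M$ has a good filtration, apply $\Hom_G(\Delta_G(\la), -)$: surjectivity of $M^U \to (\im f)^U$ (for each weight $\la$, looking at $\la$-weight spaces) plus $\Ext^1_G(\Delta_G(\la),M) = 0$ gives $\Ext^1_G(\Delta_G(\la), \ker f) = 0$ for all $\la$, hence $\ker f$ has a good filtration; and then $\Ext^1_G(\Delta_G(\la), \im f) = 0$ too, so $\im f$ has a good filtration. (3) For the surjectivity $f: M \to N$ and the good filtration of $N$: embed $N$ into an injective good-filtration module and chase, OR — and I suspect this is what the paper does — simply observe that the hypothesis is about $M^U \to N^U$ and once we know $\im f$ has a good filtration we get $H^1(U, \im f)_\la = 0$ so $N^U \to (N/\im f)^U$ is onto, forcing $(N/\im f)^U = 0$, hence $N/\im f = 0$ since a nonzero $G$-module has nonzero $U$-invariants; then $N = \im f$ has the good filtration already established.

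The main obstacle I anticipate is exactly step (3): getting genuine surjectivity of $f$ and a good filtration on the a priori-uncontrolled module $N$ from information only about $U$-invariants. The resolution hinges on the twin facts that (a) a nonzero rational $G$-module has a nonzero $U$-fixed vector (existence of highest weights / Lie–Kolchin), so $(\coker f)^U = 0 \Rightarrow \coker f = 0$; and (b) for a module $Q$ with a good filtration, $H^1(U, Q)$ vanishes in the relevant way, so that $U$-invariants are exact on short exact sequences with good-filtration kernel. One must be slightly careful that $M, N$ may be infinite-dimensional, but good filtrations are by definition exhaustive by finite-dimensional pieces and $\Ext^1_G$ commutes with the relevant colimits, so the reduction to the finite-dimensional / composition-series case is routine. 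I would write the proof in three short paragraphs along these lines.

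\begin{proof}[Proof of Lemma \ref{lem:gr}]
We use Donkin's cohomological criterion: a $G$-module $Q$ has a good filtration if and only if $\Ext^1_G(\Delta_G(\la), Q) = 0$ for all $\la \in X(T)_+$ (see \cite[II.4.16]{jantzen}); moreover $\Hom_G(\Delta_G(\la), Q) \cong (Q_\la)^{U}$ naturally in $Q$, since a map out of $\Delta_G(\la)$ is determined by the image of its highest weight vector. In particular, if $Q$ has a good filtration then $Q^U \to (Q/Q')^U$ is surjective for every $G$-submodule $Q'$, because $\Ext^1_G(\Delta_G(\la), Q') $ maps into $\Ext^1_G(\Delta_G(\la), Q) = 0$ after noting... more directly: the long exact $\Ext_G(\Delta_G(\la), -)$ sequence for $0 \to Q' \to Q \to Q/Q' \to 0$ reads
\[
0 \to (Q'_\la)^U \to (Q_\la)^U \to ((Q/Q')_\la)^U \to \Ext^1_G(\Delta_G(\la), Q') \to 0,
\]
using $\Ext^1_G(\Delta_G(\la), Q) = 0$.

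Apply this to the short exact sequence $0 \to \ker f \to M \xrightarrow{\ f\ } \im f \to 0$. By hypothesis $M^U \to N^U$ is onto; since the image of $M^U$ under this map lies in $(\im f)^U \subseteq N^U$, we get $(\im f)^U = N^U$ and $M^U \to (\im f)^U$ is onto, and the same holds weight space by weight space. From the displayed sequence (with $Q = M$, $Q' = \ker f$, $Q/Q' = \im f$) and the surjectivity of $(M_\la)^U \to ((\im f)_\la)^U$ we conclude $\Ext^1_G(\Delta_G(\la), \ker f) = 0$ for all $\la$, so $\ker f$ has a good filtration. The sequence $0 \to \ker f \to M \to \im f \to 0$ now has all relevant $\Ext^1$'s of its outer terms vanishing, whence $\Ext^1_G(\Delta_G(\la), \im f) = 0$ for all $\la$ as well, so $\im f$ has a good filtration.

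It remains to show $f$ is onto, i.e.\ $\im f = N$. Since $\im f$ has a good filtration, the inclusion $0 \to \im f \to N \to N/\im f \to 0$ together with the paragraph above gives that $N^U \to (N/\im f)^U$ is surjective. But $N^U = (\im f)^U$ maps to zero in $(N/\im f)^U$, so $(N/\im f)^U = 0$. A nonzero rational $G$-module has a nonzero $U$-fixed vector (every finite-dimensional $G$-submodule contains a highest weight vector by the Lie--Kolchin theorem applied to $U$), so $N/\im f = 0$. Thus $f$ is onto and $N = \im f$ has a good filtration. (For infinite-dimensional $M$ and $N$ the arguments above are unaffected: good filtrations are exhaustive and $\Ext^1_G(\Delta_G(\la), -)$ commutes with the directed colimits involved, so one may argue on the finite-dimensional layers.)
\end{proof}
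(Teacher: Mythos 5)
Your proof is correct and follows essentially the same route as the paper: both use the long exact sequence of $\Ext_G(\Delta_G(\la),-)$ on $0 \to \ker f \to M \to \im f \to 0$, the identification $\Hom_G(\Delta_G(\la),Q) \cong (Q_\la)^U$ (Jantzen II.2.13), and Donkin's criterion (Jantzen II.4.16) to get good filtrations on $\ker f$ and $\im f$, then apply the same criterion to $0 \to \im f \to N \to \coker f \to 0$ to conclude $(\coker f)^U = 0$ and hence $\coker f = 0$. The exploratory detours in your preamble (injective envelopes, reduction to composition length) are unnecessary, and the final written proof is the one the paper gives.
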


\begin{proof}
Put $I=\im f$ and $K = \ker f$. Fix any $\la \in X(T)_+$. Since $M$ has a good filtration, we have an exact sequence (see \cite[Proposition II.4.16]{jantzen})
\[0 \to \Hom_G(\Delta_G(\la), K) \to \Hom_G(\Delta_G(\la), M) \to \Hom_G(\Delta_G(\la), I) \to \Ext^1_G(\Delta_G(\la), K) \to 0.\]
The assumption gives an exact sequence
\[0 \to K^{U} \to M^{U} \to I^{U} \to 0.\]
Taking $\la$-weights above we obtain that $\Ext^1_G(\Delta_G(\la), K)=0$ (see \cite[Lemma II.2.13]{jantzen}). Since $\la \in X(T)_+$ was arbitrary, this shows that $K$ has a good filtration (see \cite[Proposition II.4.16]{jantzen}), and hence so does $I$. Let $C=\coker f$ and consider an exact sequence $0 \to I \to N \to C \to 0$. Since $I$ has a good filtration, we see as above that the induced sequence $0 \to I^{U} \to N^{U} \to C^{U} \to 0$ is also exact. By assumption $C^{U} = 0$, hence $C=0$.
\end{proof}

\begin{corollary}\label{cor:goodpair}
Let $Y$ be a good affine $G$-variety and $X\subset Y$ a closed $G$-stable subvariety. Then $(Y,X)$ is a good pair if and only if the map $\kk[Y]^{U} \to \kk[X]^{U}$ is surjective.
\end{corollary}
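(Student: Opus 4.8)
The plan is to work with the short exact sequence of $G$-modules
\[
0 \to I_X \to \kk[Y] \xto{\pi} \kk[X] \to 0,
\]
where $I_X\subset\kk[Y]$ is the defining ideal of $X$, and to recall that $\kk[Y]$ already carries a good filtration because $Y$ is good. By definition $(Y,X)$ is a good pair exactly when $I_X$ has a good filtration, so the assertion reduces to: $I_X$ has a good filtration $\Longleftrightarrow$ $\kk[Y]^U\to\kk[X]^U$ is surjective.

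The \textbf{``if'' direction} is immediate from Lemma~\ref{lem:gr}: applying it to the $G$-module surjection $\pi$, whose source has a good filtration and which is onto on $U$-invariants by hypothesis, shows that $\ker\pi=I_X$ has a good filtration, i.e.\ $(Y,X)$ is a good pair.

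For the \textbf{``only if'' direction}, I would first invoke the cohomological criterion \cite[Proposition~II.4.16]{jantzen}: $I_X$ has a good filtration if and only if $\Ext^1_G(\Delta_G(\la),I_X)=0$ for all $\la\in X(T)_+$. Feeding this vanishing into the long exact $\Hom_G(\Delta_G(\la),-)$-sequence of the displayed short exact sequence makes
\[
\Hom_G(\Delta_G(\la),\kk[Y]) \longrightarrow \Hom_G(\Delta_G(\la),\kk[X])
\]
surjective for every $\la\in X(T)_+$. The remaining point is to transport this back to $U$-invariants. For any $G$-module $M$ and any $\la\in X(T)_+$, evaluation at a highest weight vector $v_\la$ of $\Delta_G(\la)$ gives a natural isomorphism $\Hom_G(\Delta_G(\la),M)\cong (M^U)_\la$: it is injective since $v_\la$ generates $\Delta_G(\la)$, and surjective since a nonzero $U$-fixed vector of weight $\la$ spans a $B$-submodule $\cong\kk_\la$ and the $G$-submodule it generates is a highest-weight module of highest weight $\la$, hence a quotient of the Weyl module $\Delta_G(\la)$ by its universal property. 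Combined with the surjectivity above, this shows $(\kk[Y]^U)_\la\to(\kk[X]^U)_\la$ is onto for every dominant $\la$. Since every weight occurring in $M^U$ is dominant (a nonzero $U$-fixed weight vector is a highest weight vector for each root $\mathrm{SL}_2$, forcing $\langle\mu,\al^\vee\rangle\ge 0$), we have $\kk[X]^U=\bigoplus_{\la\in X(T)_+}(\kk[X]^U)_\la$, and the weight-by-weight surjectivity yields surjectivity of $\kk[Y]^U\to\kk[X]^U$.

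The main obstacle is the natural identification $\Hom_G(\Delta_G(\la),M)\cong(M^U)_\la$ for \emph{arbitrary} $M$ and its compatibility with the maps induced by $\pi$; once this (together with the dominance of weights of $M^U$) is in place, both implications are formal, one resting on Lemma~\ref{lem:gr} and the other on the $\Ext^1$-criterion for good filtrations.
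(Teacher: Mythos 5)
Your proof is correct. The forward implication (surjectivity of $\kk[Y]^U\to\kk[X]^U$ implies $(Y,X)$ is good) is handled exactly as in the paper, by applying Lemma~\ref{lem:gr} to the restriction $\kk[Y]\onto\kk[X]$. For the converse, the paper simply cites Donkin (Propositions~1.4 and~2 of the reference \texttt{donkinunip}), whereas you give a self-contained argument: the vanishing of $\Ext^1_G(\Delta_G(\la), I_X)$ from the good filtration on $I_X$, the long exact $\Hom_G(\Delta_G(\la),-)$ sequence, the natural identification $\Hom_G(\Delta_G(\la),M)\cong (M^U)_\la$ coming from the universal property of Weyl modules (Jantzen, Lemma~II.2.13), and the observation that all weights of $M^U$ are dominant. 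This is in effect the content of the Donkin results the paper invokes, and it is also precisely the mechanism already running inside the paper's proof of Lemma~\ref{lem:gr} (which uses the same identification $\Hom_G(\Delta_G(\la),M)=(M^U)_\la$). So your approach buys a self-contained treatment of both directions with a single set of tools, at the cost of spelling out material the paper prefers to outsource to a reference; the paper's version is terser and avoids repeating an argument already implicit in Lemma~\ref{lem:gr}. One small polish you could add is an explicit citation for the universal property of $\Delta_G(\la)$ (Jantzen II.2.13(b)) rather than rederiving it inline, since that is really what is doing the work in the surjectivity of your evaluation map.
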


\begin{proof}
If $\kk[Y]^{U} \to \kk[X]^{U}$ is surjective, then it follows from Lemma \ref{lem:gr} that $(Y,X)$ is a good pair. The converse follows from \cite[Proposition 1.4 and Proposition 2]{donkinunip}.
\end{proof}

We introduce a notion for generators of ideals, that is again relevant only in positive characteristic.

\begin{defn}\label{def:goodgen}
Let $Y$ be a good affine $G$-variety and $X\subset Y$ a closed $G$-stable subvariety with defining ideal $I_X \subset \kk[Y]$. We say that a finite set of equations $\mc{P} \subset I_X$ are good defining equations (resp.\ good generators) of $X$ (resp.\ of $I_X$) if the following hold for $M_{\mc{P}} := \op{span}_{\kk} \mc{P}  \, \subset I_X$:
\begin{enumerate}
\item $M_{\mc{P}}$ is a $G$-module with a good filtration;
\item The multiplication map $m_{\mc{P}}: k[Y] \oo M_{\mc{P}} \to I_X$ induces a surjective map on $U$-invariants $(k[Y] \oo M_{\mc{P}})^{U} \to I_X^{U}$.
\end{enumerate}
\end{defn}

Let us record some useful results regarding this notion. We continue with the notation in Definition \ref{def:goodgen}. 

\begin{lemma}\label{lem:exist}
There exist good defining equations for $X \subset Y$ if and only if $(Y,X)$ is a good pair.
\end{lemma}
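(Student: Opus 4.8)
The plan is to prove the equivalence in Lemma \ref{lem:exist} by unwinding both conditions and reducing them to the surjectivity criterion of Corollary \ref{cor:goodpair}.

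First I would prove the easy direction: if good defining equations $\mc{P}$ exist, then $(Y,X)$ is a good pair. By Corollary \ref{cor:goodpair} it suffices to show that $\kk[Y]^{U} \to \kk[X]^{U}$ is surjective, equivalently that the inclusion $I_X^{U} \into \kk[Y]^{U}$ together with the quotient gives the right cokernel; more directly, I would argue that $I_X$ has a good filtration. For this, let $M = M_{\mc{P}}$, a $G$-module with good filtration by (1), and consider the multiplication map $m_{\mc{P}}\colon \kk[Y] \oo M \to I_X \subset \kk[Y]$. Since $\kk[Y]$ is good and $M$ has a good filtration, Proposition \ref{prop:mathieu} shows $\kk[Y]\oo M$ has a good filtration. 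By hypothesis (2) the induced map on $U$-invariants $(\kk[Y]\oo M)^{U} \to I_X^{U}$ is onto, so Lemma \ref{lem:gr} applies to $f = m_{\mc{P}}$ and gives that $I_X = \im m_{\mc{P}}$ has a good filtration (and is indeed the image, as $m_{\mc{P}}$ lands in $I_X$ and is onto it by (2) composed with the fact that $I_X^U$ generates... ); hence $(Y,X)$ is a good pair.

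For the converse, suppose $(Y,X)$ is a good pair, so $I_X \subset \kk[Y]$ has a good filtration. I want to produce a finite-dimensional $G$-submodule $M \subset I_X$ with a good filtration such that $(\kk[Y]\oo M)^{U} \to I_X^{U}$ is onto. The natural candidate is to take $M$ large enough to contain a generating set. Since $I_X$ is a finitely generated ideal and $\kk[Y]$ is a rational $G$-module of countable dimension with $I_X$ a $G$-submodule, $I_X$ is a union of finite-dimensional $G$-submodules; choose finitely many generators of $I_X$ and let $M_0$ be the (finite-dimensional) $G$-submodule they span. The issue is that $M_0$ need not have a good filtration. To fix this, I would invoke that $I_X$ has a good filtration: write $I_X = \bigcup_i N_i$ with $N_i/N_{i-1}$ dual Weyl modules; then $M_0 \subset N_i$ for $i \gg 0$, and $N_i$ is a finite-dimensional $G$-module with a good filtration containing a generating set of $I_X$. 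Set $M = N_i$. Then (1) holds, and $M$ contains generators of $I_X$, so the image of $\kk[Y]\oo M \to I_X$ is all of $I_X$, i.e.\ $m_{\mc{P}}$ is onto; composing with the good filtration structure, Lemma \ref{lem:gr} (or directly: a surjection of $G$-modules where both source and target have good filtrations is automatically split after applying $(-)^{U}$ in the relevant degrees) forces $(\kk[Y]\oo M)^U \to I_X^U$ to be onto as well. Finally pick a $\kk$-basis $\mc{P}$ of $M$; this is the desired set of good defining equations.

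The main obstacle is the converse direction, specifically ensuring that the finite-dimensional module $M$ can simultaneously (a) contain an ideal-generating set and (b) carry a good filtration. The resolution above — exhausting $I_X$ by a good filtration and taking a large enough term — handles this cleanly. One should also double-check the surjectivity on $U$-invariants in the converse: here it follows because $m_{\mc{P}}\colon \kk[Y]\oo M \onto I_X$ is a surjection of $G$-modules with both sides possessing good filtrations, so for each $\la \in X(T)_+$ the kernel has vanishing $\Ext^1_G(\Delta_G(\la),-)$ (as in the proof of Lemma \ref{lem:gr}), which gives exactness on $\la$-weight spaces of $U$-invariants, hence the surjection $(\kk[Y]\oo M)^{U}\onto I_X^{U}$. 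Everything else is a routine application of Proposition \ref{prop:mathieu}, Lemma \ref{lem:gr}, and Corollary \ref{cor:goodpair}.
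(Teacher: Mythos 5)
Your forward direction (good defining equations $\Rightarrow$ good pair) is correct and runs exactly as in the paper: apply Proposition \ref{prop:mathieu} to get a good filtration on $\kk[Y]\oo M_{\mc{P}}$, then feed condition (2) of Definition \ref{def:goodgen} into Lemma \ref{lem:gr} to conclude that $I_X$ has a good filtration.

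The converse direction, however, has a genuine gap. You take $M$ to be a piece of a good filtration of $I_X$ that contains a set of ideal generators of $I_X$ (over $\kk[Y]$), so $m_{\mc{P}}\colon \kk[Y]\oo M\to I_X$ is onto with both sides having good filtrations, and then assert that this \emph{forces} $(\kk[Y]\oo M)^U\to I_X^U$ to be onto. That inference is false in general, and the justification you give is circular: in the long exact sequence $\Hom_G(\Delta_G(\la),\kk[Y]\oo M)\to\Hom_G(\Delta_G(\la),I_X)\to\Ext^1_G(\Delta_G(\la),K)\to\Ext^1_G(\Delta_G(\la),\kk[Y]\oo M)=0$, the vanishing of $\Ext^1_G(\Delta_G(\la),K)$ is \emph{equivalent} to surjectivity of the map on $\la$-weight spaces of $U$-invariants, which is precisely what you are trying to prove; it is not implied merely by both sides having good filtrations. (A surjection of good-filtration modules need not be surjective on $U$-invariants: for $G=\mathrm{SL}_2$ in characteristic $p$, the quotient map $\nabla(p)\twoheadrightarrow \nabla(p)/L(p)\cong\nabla(p-2)$ is a surjection of dual Weyl modules, but both $U$-invariant spaces are one-dimensional of distinct weights $p$ and $p-2$, so the induced map on $U$-invariants is zero.) The paper avoids this entirely by choosing $M$ differently: since $(Y,X)$ is a good pair, $\kk[Y]^U$ is noetherian (by \cite[Theorem 16.2]{grossbook}), so $I_X^U$ is a finitely generated ideal of $\kk[Y]^U$; pick a finite generating set of $I_X^U$ and take $M$ to be a term of the good filtration of $I_X$ containing it. Then the composite $\kk[Y]^U\oo M^U\to(\kk[Y]\oo M)^U\to I_X^U$ has image $\kk[Y]^U\cdot M^U\supseteq I_X^U$, so condition (2) is immediate. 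In short: choose $M$ to contain generators of $I_X^U$ as a $\kk[Y]^U$-ideal, not generators of $I_X$ as a $\kk[Y]$-ideal.
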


\begin{proof}
Assume that $(Y,X)$ is a good pair. By \cite[Theorem 16.2]{grossbook}, $\kk[Y]^{U}$ is noetherian, hence $I_X^{U}$ is finitely generated. Choose a finite set of generators. Taking a good filtration of $I_X$, there exists a finite dimensional piece $M$ that contains these generators. We can pick $\mc{P}$ to be a basis of $M$.

Conversely, let $\mc{P} \subset I_X$ be a set of good generators. By Proposition \ref{prop:mathieu}, the domain of the multiplication map $m_{\mc{P}}$ has a good filtration. By Lemma \ref{lem:gr}, we obtain that $m_{\mc{P}}$ is surjective, and $I_X$ has a good filtration.
\end{proof}

The proof above shows assumption (2) in Definition \ref{def:goodgen} can be replaced with the equivalent assumption that $\mc{P}$ generates $I_X$ and $\ker m_{\mc{P}}$ has a good filtration. In particular, the notion does not depend on the choice of the Borel subgroup (see \cite[Remark II.4.16 (2)]{jantzen}). We record another convenient fact.

\begin{lemma}\label{lem:compint}
Assume that $Y$ is good and let $M \subset I_X $ be $G$-module such that a basis $\mc{P}$ of $M$ generates $I_X$ and forms a regular sequence in $\kk[Y]$. Assume that $\bigwedge M$ has a good filtration. Then $\mc{P}$ are good defining equations of $X \subset Y$.
\end{lemma}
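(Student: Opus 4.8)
The plan is to verify the two conditions in Definition \ref{def:goodgen} for the set $\mc{P}$. Condition (1) asks that $M = M_{\mc{P}} = \op{span}_\kk \mc{P}$ has a good filtration; but this is immediate, since if $\bigwedge M$ has a good filtration then so does its degree-one component $M$ (more precisely, $\bigwedge M$ good forces each $\bigwedge^i M$ good, and in particular $\bigwedge^1 M = M$). So the real content is condition (2): the multiplication map $m_{\mc{P}} \colon \kk[Y] \oo M \to I_X$ should be surjective on $U$-invariants. Since $\mc{P}$ is assumed to generate $I_X$, the map $m_{\mc{P}}$ is already surjective, so by the remark following Lemma \ref{lem:exist} (or by a direct application of Lemma \ref{lem:gr}) it suffices to show that $\ker m_{\mc{P}}$ has a good filtration; then Lemma \ref{lem:gr} applied to $m_{\mc{P}}$ gives that $m_{\mc{P}}$ is onto on $U$-invariants and $(Y,X)$ is a good pair, which is exactly what is needed.

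The key step is thus to identify $\ker m_{\mc{P}}$ using the regular-sequence hypothesis. If $\mc{P} = \{f_1,\dots,f_r\}$ is a regular sequence in the domain $\kk[Y]$ generating $I_X$, then the Koszul complex $K_\bullet = \bigwedge^\bullet(\kk[Y] \oo M)$ on $f_1,\dots,f_r$ is a resolution of $\kk[Y]/I_X = \kk[X]$; in particular it is exact in positive homological degrees. Truncating, the tail
\[
\kk[Y] \oo \textstyle\bigwedge^2 M \xrightarrow{\;d_2\;} \kk[Y] \oo M \xrightarrow{\;m_{\mc{P}}\;} I_X \to 0
\]
is exact, so $\ker m_{\mc{P}} = \im d_2$. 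Now I would observe that all of this is equivariant: since $M$ is a $G$-submodule of $\kk[Y]$ and the Koszul differentials are built from multiplication in $\kk[Y]$, the complex $K_\bullet$ is a complex of $G$-modules and $d_2$ is a $G$-map. Hence $\ker m_{\mc{P}}$ is a quotient of $\kk[Y] \oo \bigwedge^2 M$; but it is better to present it as an image and instead run the standard argument degree by degree: $\ker m_{\mc{P}} = \im d_2 = \coker d_3$, and so on, so $\ker m_{\mc{P}}$ is the cokernel of $\kk[Y]\oo\bigwedge^3 M \to \kk[Y]\oo \bigwedge^2 M$. By hypothesis $\bigwedge M$ has a good filtration, so each $\bigwedge^i M$ does, and since $Y$ is good, Proposition \ref{prop:mathieu} shows each $\kk[Y] \oo \bigwedge^i M$ has a good filtration. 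It remains to propagate "good filtration" through the exact Koszul tail: splitting $K_\bullet$ into short exact sequences $0 \to Z_i \to \kk[Y]\oo\bigwedge^i M \to Z_{i-1} \to 0$ (where $Z_i$ is the $i$-th cycle/boundary module and $Z_1 = \ker m_{\mc{P}}$), and inducting downward from the top term $Z_r = \kk[Y]\oo\bigwedge^r M$ (or noting $r$ is finite so the complex is bounded), each $Z_i$ has a good filtration by the two-out-of-three property for good filtrations in short exact sequences (Lemma \ref{lem:gr} gives the relevant instance, or directly \cite[Proposition II.4.16]{jantzen}). In particular $\ker m_{\mc{P}} = Z_1$ has a good filtration.

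With $\ker m_{\mc{P}}$ shown to have a good filtration, Lemma \ref{lem:gr} applied to the surjection $m_{\mc{P}}\colon \kk[Y]\oo M \onto I_X$ (whose source has a good filtration by Proposition \ref{prop:mathieu}) yields that the induced map on $U$-invariants $(\kk[Y]\oo M)^U \to I_X^U$ is onto — wait, one must be slightly careful: Lemma \ref{lem:gr} as stated takes the $U$-surjectivity as hypothesis and concludes the kernel is good, not conversely. So instead I would invoke the remark immediately after Lemma \ref{lem:exist}, which records that condition (2) of Definition \ref{def:goodgen} is equivalent to "$\mc{P}$ generates $I_X$ and $\ker m_{\mc{P}}$ has a good filtration" — both of which we have verified. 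The main obstacle is the bookkeeping in the middle paragraph: one must be sure the Koszul complex is genuinely a complex of rational $G$-modules (this uses that $M$ is a $G$-submodule, not merely a subspace) and that the downward induction through the bounded exact complex is set up correctly so that the two-out-of-three property applies at each stage. Everything else is formal.
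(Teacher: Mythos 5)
Your proof is correct and takes essentially the same route as the paper's one-line argument, which simply invokes the Koszul resolution together with Donkin's propagation of good filtrations through exact sequences and Proposition \ref{prop:mathieu}. Two small slips worth noting but not fatal: the induction should start from $Z_{r-1}\cong \kk[Y]\oo\bigwedge^r M$ (since $Z_r=\ker d_r=0$ for a resolution), and ``two-out-of-three'' overstates the closure properties of good filtrations --- from a short exact sequence $0\to A\to B\to C\to 0$ one gets $C$ good when $A,B$ are good (via $\Ext^1_G(\Delta(\la),B)=0$ and $\Ext^2_G(\Delta(\la),A)=0$) and $B$ good when $A,C$ are good, but \emph{not} $A$ good when $B,C$ are good; you use the first direction, so the induction is sound, and the correct reference for that step is \cite[Proposition II.4.16]{jantzen} (or Donkin's Proposition 3.2.4), not Lemma \ref{lem:gr}, which runs in the opposite direction.
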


\begin{proof}
This follows readily by considering the Koszul resolution, and using \cite[Proposition 3.2.4]{donkin} together with Proposition \ref{prop:mathieu} repeatedly.
\end{proof}

Although we do not need it in this article, the assumption on $\bigwedge M$ in the lemma above can be weakened by requiring only that the good filtration dimension of $\bigwedge^i M$ is at most  $i-1$, for all $i\geq 1$ (see \cite[Section 1.3]{donkinconj}).

\subsection{Deformation of algebras}\label{sec:gropop}

We recall a filtration of algebras considered in \cite{popov} and \cite{gross}. There exists a homomorphism $h: X(T) \to \bb{Z}$ satisfying the following properties:
\begin{enumerate}
\item $h(\la)$ is a non-negative integer for all $\la\in X(T)_+$;
\item if $\chi',\chi \in X(T)$ with $\chi' > \chi$, then $h(\chi')> h(\chi)$.
\end{enumerate}

For a commutative $G$-algebra $A$ over $k$, we define the $\bb{Z}_{\geq 0}$-filtration
\[ F^i A := \{ a \in A \, | \, h(\chi) \leq i \mbox{ for all } T\mbox{-weights } \chi \mbox{ of } \op{span}_\kk  G \cdot a \}.\]
Denote by $\gr A$ the associated graded algebra. Then there is an injective map of $G$-algebras
\begin{equation}\label{eq:grA}
\gr A \hookrightarrow (A^{U^-} \oo_\kk \kk[G/U])^T,  
\end{equation}
which is onto if and only if $A$ has a good filtration \cite[Theorem 16]{gross}.

Consider $L$ a linear algebraic group, and $H\subset L$ a closed subgroup. Let $N:=N_L(H)$ be the normalizer of $H$ in $L$. Let $R$ be an $L$-algebra. The group $N$ acts naturally on $R^H$ and on $H$-invariants $\kk[L]^H=\kk[L/H]$ (by right multiplication). The following is a consequence of \cite[Theorem 4]{popov} (see also  \cite[Theorem 9.1]{grossbook}).
\begin{lemma}\label{lem:transfer}
There is an isomorphism of $N$-algebras $R^H \cong (R \oo_\kk \kk[L/H])^L$.
\end{lemma}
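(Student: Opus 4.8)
The plan is to exhibit the isomorphism explicitly via the "averaging over $L$" construction and to verify $N$-equivariance by hand. First I would recall that $\kk[L/H] = \kk[L]^H$ carries the standard left $L$-action and commuting right $N$-action (descending from right multiplication by $N_L(H)$ on $L$). Starting from the $L$-algebra $R$, form the tensor product $R\oo_\kk \kk[L/H]$ with the diagonal left $L$-action (on $R$ via the given action, on $\kk[L/H]$ via left translation) and the right $N$-action coming only from the second factor; then $(R\oo_\kk \kk[L/H])^L$ inherits the right $N$-action. The candidate map $\Phi\colon R^H \to (R\oo_\kk\kk[L/H])^L$ sends $r\in R^H$ to the function $\ell \mapsto \ell^{-1}\cdot r$, regarded as an element of $R\oo_\kk \kk[L]$; the point is that its values lie in $R\oo_\kk\kk[L/H]$ precisely because $r$ is $H$-invariant, and it is $L$-invariant for the diagonal action by construction.

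Next I would write down the inverse. Given $f\in (R\oo_\kk\kk[L/H])^L$, viewing $f$ as a regular map $L\to R$ (using $\kk[L/H]\subset \kk[L]$), the $L$-invariance says $f(\ell g) = \ell\cdot f(g)$; in particular $f$ is determined by $f(e)$, and $f(e)$ lies in $R^H$ because $f(h) = f(e)$ (as $f$ factors through $L/H$) forces $h\cdot f(e) = f(h^{-1}\cdot e)\cdot{}$, i.e. $f(e)$ is $H$-fixed. The assignment $f\mapsto f(e)$ is then inverse to $\Phi$, and it is clearly an algebra homomorphism since evaluation at $e$ is. So $\Phi$ is an isomorphism of $\kk$-algebras; I would note that it is a slight reformulation of the classical identity $R^H\cong (R\oo_\kk\kk[L/H])^L$, which is exactly \cite[Theorem 4]{popov} / \cite[Theorem 9.1]{grossbook}, so one may alternatively just cite these for the algebra isomorphism and concentrate on the equivariance.

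The remaining — and only genuinely delicate — point is the $N$-equivariance. For $n\in N$ and $r\in R^H$, I must check that $\Phi(n\cdot r) = \Phi(r)\cdot n$, where the left $N$-action on $R^H\subset R$ is the restriction of the $L$-action, and the right $N$-action on the target is through $\kk[L/H]$. Unwinding: $\Phi(r)\cdot n$ is the function $\ell\mapsto (\ell n)^{-1}\cdot r = n^{-1}\ell^{-1}\cdot r$, while $\Phi(n\cdot r)$ is $\ell\mapsto \ell^{-1}n\cdot r$. These do not agree on the nose, which signals that the correct $N$-action on $R^H$ to match the right translation action on $\kk[L/H]$ is $r\mapsto n^{-1}\cdot r$ (a right action written multiplicatively, or equivalently the left action by $n\mapsto n^{-1}$); with that normalization the two functions coincide. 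I would therefore state the lemma with the $N$-action on $R^H$ being this restriction of the $L$-action (the $N$-action used in the application, $N = N_L(H)$ acting on $R^H$ and on $\kk[L/H]$, is by right multiplication on $\kk[L/H]$, so the bookkeeping is that the isomorphism is covariant for $N$ once one fixes compatible left/right conventions). The main obstacle is thus purely the careful matching of the left $L$-action with the commuting right $N$-action through the two descriptions of $\kk[L/H]$; everything else is a direct verification or an appeal to \cite[Theorem 4]{popov}.
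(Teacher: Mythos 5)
The paper's own ``proof'' of this lemma is a one-line citation of the transfer principle (Popov, Theorem~4; Grosshans, Theorem~9.1). Your plan---to reconstruct the explicit transfer isomorphism and verify $N$-equivariance directly---is exactly the right strategy and is precisely the content behind those references. However, there is a genuine inversion error in your candidate map that then propagates into a phantom issue at the $N$-equivariance step.

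The problem: $\kk[L/H]$ is formed by letting $H$ act on $L$ by \emph{right} translation, so the surviving $L$-action on $\kk[L/H]$ is by \emph{left} translation, $(\ell'\cdot f)(\ell)=f(\ell'^{-1}\ell)$. Under the diagonal $L$-action on $R\oo_\kk\kk[L/H]$, an element viewed as a regular map $f\colon L\to R$ is $L$-invariant iff $f(\ell'\ell)=\ell'\cdot f(\ell)$, equivalently $f(\ell)=\ell\cdot f(e)$. You actually state this correctly when constructing the inverse (``$L$-invariance says $f(\ell g)=\ell\cdot f(g)$''), but it contradicts your formula $\Phi(r)(\ell)=\ell^{-1}\cdot r$. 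With $\Phi(r)(\ell)=\ell^{-1}\cdot r$ one has $\Phi(r)(\ell'\ell)=\ell^{-1}\ell'^{-1}\cdot r\neq\ell'\cdot(\ell^{-1}\cdot r)$, so this $\Phi(r)$ is \emph{not} $L$-invariant. The correct map is
\[
\Phi(r)(\ell)=\ell\cdot r,
\]
which lands in $R\oo\kk[L/H]$ because $\Phi(r)(\ell h)=\ell h\cdot r=\ell\cdot r$ for $r\in R^H$, and is $L$-invariant since $\ell'\cdot\Phi(r)(\ell'^{-1}\ell)=\ell'\cdot(\ell'^{-1}\ell\cdot r)=\ell\cdot r$.

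This fix also dissolves the ``genuinely delicate'' point you raise about $N$-equivariance: it was an artifact of the sign error, not a real feature. With $\Phi(r)(\ell)=\ell\cdot r$, the right $N$-action on $\kk[L/H]$ given by $(f\cdot n)(\ell)=f(\ell n)$ yields
\[
(\Phi(r)\cdot n)(\ell)=\Phi(r)(\ell n)=\ell n\cdot r=\ell\cdot(n\cdot r)=\Phi(n\cdot r)(\ell),
\]
so $\Phi$ intertwines the restriction of the $L$-action on $R^H$ with the right-multiplication action on $\kk[L/H]$ on the nose; no renormalization $n\mapsto n^{-1}$ is needed. Apart from this inversion slip, your argument is sound and fills in the details that the paper leaves to the cited sources.
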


\section{Main results}\label{sec:mainresults}

In this section we develop our general results on collapsing of bundles. We work over an algebraically closed field $\kk$ of arbitrary characteristic (see Remark \ref{rem:field}).  In the special case when $\cha \kk = 0$ and the Schubert variety considered is the flag variety itself, the general framework agrees with that of completely reducible bundles as in \cite{Kempf76}.

We fix the notation that is used throughout the section. Consider a parabolic subgroup $P\subset G$. Without loss of generality, we assume that $P$ is standard corresponding to a set of simple roots $I\subset S$. Let $U_I$ be the unipotent radical of $P$. Let $P= L \ltimes U_I$ be the Levi decomposition, with $L:=L_I$ reductive. We denote by $P^-$ the opposite parabolic subgroup, having decomposition $P^-= L \ltimes  U_I^-$. 

Let $W$ be a finite-dimensional $G$-module. We introduce the map of $L$-modules
\begin{equation}\label{eq:hsplit}
\psi \colon W^{U_I} \longrightarrow \left((W^*)^{U^-_I}\right)^*,
\end{equation}
which is the dual of the composition $(W^*)^{U^-_I} \hookrightarrow W^* \twoheadrightarrow (W^{U_I})^*$. 

\smallskip

Throughout we take an $L$-submodule $V\subset W^{U_I}$ such that the map $\left.\psi \right|_{V} : V \to ((W^*)^{U^-_I})^*$ is injective. The following shows that tracking the map $\left.\psi \right|_{V}$ is relevant only when $\cha \kk > 0$.
\begin{lemma}\label{lem:iso} In either of the following cases, $\left.\psi \right|_{V}$ is an isomorphism:
\begin{itemize}
\item[(a)] $W$ is a semi-simple $G$-module and $V=W^{U_I}$.
\item[(b)] $W= \bigoplus_{i=1}^{n} \Delta_{G}(\la_i)$ for some $\la_i \in X(T)_+$, and $V \subset W^{U_I}$ is $V=\bigoplus_{i=1}^{n} \Delta_{L}(\la_i)$.
\end{itemize}
\end{lemma}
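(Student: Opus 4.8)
The plan is first to rewrite the target of $\psi$ in terms of coinvariants. For a finite-dimensional $G$-module $Y$ write $Y_{U_I^-}:=Y\big/\sum_{u\in U_I^-}(u-1)Y$ for the module of $U_I^-$-coinvariants, which is naturally an $L$-module; dualizing the quotient map $Y\onto Y_{U_I^-}$ gives a canonical identification $(Y_{U_I^-})^*\cong(Y^*)^{U_I^-}$. Unwinding the pairing $Y^{U_I}\times(Y^*)^{U_I^-}\to\kk$, $(v,f)\mapsto f(v)$, one checks that under this identification $\psi\colon W^{U_I}\to((W^*)^{U_I^-})^*$ is exactly the natural composite $W^{U_I}\into W\onto W_{U_I^-}$. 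So in both cases the claim becomes: the $L$-linear map $V\into W\onto W_{U_I^-}$ is bijective. It is also convenient to note that $\sum_{u\in U_I^-}(u-1)Y=\op{dist}(U_I^-)_+\cdot Y$, where $\op{dist}(U_I^-)_+$ is the augmentation ideal of the algebra of distributions (hyperalgebra) of $U_I^-$; using the Kostant $\bb{Z}$-form makes all of this uniform in the characteristic.

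The central step is a surjectivity statement: if $Y$ is a finite-dimensional $G$-module generated by a $U$-fixed $T$-weight vector $v$ of $G$-dominant weight, so that $Y=\op{dist}(G)\cdot v$, then the image of the $L$-submodule $\op{dist}(L)\cdot v\subseteq Y$ in $Y_{U_I^-}$ is all of $Y_{U_I^-}$. To see this, combine the triangular decomposition $\op{dist}(G)=\op{dist}(U^-)\,\op{dist}(B)$ with $\op{dist}(B)\cdot v=\kk v$ to get $Y=\op{dist}(U^-)\cdot v$; since $U^-=U_I^-\rtimes(U^-\cap L)$ with $U_I^-$ normal, the hyperalgebra factors as $\op{dist}(U^-)=\op{dist}(U_I^-)\,\op{dist}(U^-\cap L)$, whence $Y=\op{dist}(U_I^-)\cdot\big(\op{dist}(U^-\cap L)\cdot v\big)=\op{dist}(U_I^-)\cdot\big(\op{dist}(L)\cdot v\big)$, and therefore $Y=\op{dist}(L)\cdot v+\op{dist}(U_I^-)_+\cdot Y$, which is the assertion. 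I expect this PBW-type factorization of $\op{dist}(U^-)$ along the semidirect product, valid in all characteristics, to be the only point needing genuine care.

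For case (b), after peeling off direct summands we may take $W=\Delta_G(\la)$ with $v=v_\la$ the highest weight vector, which generates $W$. As $v_\la$ is fixed by $U\supseteq U_I$ and $U_I$ is normal in $P$, the $L$-submodule $\op{dist}(L)\cdot v_\la$ lies in $W^{U_I}$; being generated by a $(U\cap L)$-fixed weight-$\la$ vector with $\la$ dominant for $L$, it is a quotient of $\Delta_L(\la)$, and this is the module $V$ in the statement. By the surjectivity step, $\psi|_V$ maps onto $W_{U_I^-}$. Injectivity of $V\into W\onto W_{U_I^-}$ follows by comparing $T$-weights: the weights of $\op{dist}(L)\cdot v_\la$ lie in $\la-\bb{Z}_{\ge0}I$, while those of $\op{dist}(U_I^-)_+\cdot W=\op{dist}(U_I^-)_+\cdot\op{dist}(L)\cdot v_\la$ lie in $\la$ minus a nonzero nonnegative combination of positive roots having strictly positive coefficient on some simple root outside $I$; these weight sets are disjoint, so the two subspaces intersect only in $0$. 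Hence $\psi|_V$ is an isomorphism (and, incidentally, $V\cong\Delta_L(\la)$).

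For case (a), splitting $W$ into simple summands we take $W=L(\la)$ and present it as $\Delta_G(\la)/\rad\Delta_G(\la)$. Pushing the identity $\Delta_G(\la)=\op{dist}(L)\cdot v_\la+\op{dist}(U_I^-)_+\cdot\Delta_G(\la)$ through the $G$-equivariant quotient $\Delta_G(\la)\onto L(\la)$ (which commutes with $\op{dist}(U_I^-)$) shows that $W=N+\op{dist}(U_I^-)_+\cdot W$ for an $L$-submodule $N\subseteq W^{U_I}$, so $\psi|_{W^{U_I}}$ is surjective. For injectivity, apply the surjectivity step instead to the semisimple module $W^*$ and the parabolic $P^-$ (whose ``positive'' unipotent radical is $U_I^-$): one gets a surjection $(W^*)^{U_I^-}\onto(W^*)_{U_I}$, and combining $\dim(W^*)_{U_I}=\dim W^{U_I}$ with $\dim(W^*)^{U_I^-}=\dim W_{U_I^-}$ yields $\dim W_{U_I^-}\ge\dim W^{U_I}$; together with the surjectivity above this forces equality of dimensions, so $\psi|_{W^{U_I}}$ is an isomorphism. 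Apart from the hyperalgebra factorization flagged above, everything here is routine bookkeeping with dualities, coinvariants, and root data.
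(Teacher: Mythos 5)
Your argument is correct but takes a genuinely different route from the paper. You first recast $\psi$ as the composite $W^{U_I}\hookrightarrow W\twoheadrightarrow W_{U_I^-}$ via coinvariants and then run everything through the factorization $\op{dist}(U^-)=\op{dist}(U_I^-)\cdot\op{dist}(U^-\cap L)$ of the hyperalgebra, obtaining surjectivity onto $W_{U_I^-}$ from $W=\op{dist}(U_I^-)\cdot(\op{dist}(L)\cdot v_\la)$ and injectivity from a direct weight-disjointness count (for (b)) or a duality/dimension count (for (a)). The paper instead leans on module-theoretic inputs: for (b) it invokes the surjection $\nabla_G(\la)\twoheadrightarrow\nabla_L(\la)$ coming from Lemma \ref{lem:surj} to identify the $L$-submodule generated by $v_\la$ with $\Delta_L(\la)$, cites Donkin for $((W^*)^{U_I^-})^*\cong\Delta_L(\la)$, and then concludes from the map being nonzero on the $\la$-weight space plus equality of dimensions; for (a) it uses that both $W^{U_I}$ and $((W^*)^{U_I^-})^*$ are simple $L$-modules (Jantzen II.2.11) and that $\psi$ is nonzero. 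Your version is more elementary and self-contained (no appeal to cohomology of Schubert varieties or to Donkin), at the cost of having to verify the PBW-type factorization of $\op{dist}(U^-)$ along the semidirect product, which you correctly flag as the one nontrivial technical point; that factorization is indeed standard and characteristic-free. The paper's version is shorter but consumes more background. One small remark: your reduction in (a) to $W$ simple and the identification $\ker(W\twoheadrightarrow W_{U_I^-})=\op{dist}(U_I^-)_+\cdot W$ both deserve a sentence of justification (the latter follows from the fact that $(W^*)^{U_I^-}$ is the perpendicular of $\op{dist}(U_I^-)_+\cdot W$ via the antipode), but these are genuinely routine.
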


\begin{proof}
For part (a), we can assume that $W$ is a simple $G$-module. Both $W^{U_I}$ and $((W^*)^{U^-_I})^*$ are simple $L$-modules \cite[Proposition II.2.11]{jantzen}, and $\psi$ gives a non-trivial map between their respective highest weight vectors. Therefore, $\psi$ is an isomorphism.

For part (b), we can assume that $W = \Delta_G(\la)$ is a Weyl module. The restriction map $\nabla_G(\la) \to \nabla_L(\la)$ induced by $P/B \subset G/B$ is surjective (see Lemma \ref{lem:surj}). Therefore, the $L$-submodule of $W$ generated by its highest weight vector (of weight $\la$) is $V\cong \Delta_L(\la)$. On the other hand, we have $((W^*)^{U^-_I})^* \cong \Delta_L(\la)$ as $L$-modules (see \cite[Section 1.2]{donkinunip}), generated as an $L$-module by the highest weight vector. Since on the weight space of $\la$ the map $\left.\psi \right|_{V}$ is easily seen to be non-zero, it is also surjective, hence an isomorphism.
\end{proof}

\smallskip

Let $X$ be a closed $L$-stable subvariety of $V$. As $U_I$ acts on $V$ trivially, $X$ is $P$-stable closed subvariety of $W$. We have the following proper collapsing map
\begin{equation}\label{eq:collapse}
q \colon G\times_P X \longrightarrow W,
\end{equation}
with $\op{im} q = G\cdot X$ a closed subvariety of $W$. Let $\pi : G\times_P X \to G/P$ be the bundle map. For any closed subset $Y \subset G/P$, the subvariety $q(\pi^{-1}(Y)) \subset W$ is closed. In the case when $Y=X(w)_P$ is a Schubert variety, then $q(\pi^{-1} (Y)) = \ol{BwX}$ is a $B$-stable subvariety in $X$. 

\begin{proposition}\label{prop:invariantalg}
For any $w\in \mc{W}^I$, the restriction map $\kk[\ol{BwX}] \to \kk[wX]$ induces an isomorphism of algebras 
\[\kk[\ol{BwX}]^{U(w)} \, \xrightarrow{\,\,\cong\,\,} \, \kk[wX].\]
Thus, the algebra $\kk[X]$ is a direct summand of $\kk[\ol{BwX}]$ as a $\kk[X]$-module via $\kk[X]\cong \kk[wX]$.
\end{proposition}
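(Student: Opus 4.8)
The plan is to realize $\kk[\ol{BwX}]$ as a section ring over the Schubert variety $X(w)_P$ and then isolate the $U(w)$-invariants via the open-cell decomposition. First I would recall that $\ol{BwX} = q(\pi^{-1}(X(w)_P))$, where $\pi\colon G\times_P X \to G/P$ is the bundle map; pulling back along $q$ and using that the collapsing map restricted to $\pi^{-1}(X(w)_P)$ is proper with image $\ol{BwX}$, the functions on $\ol{BwX}$ are exactly the $G$-equivariant (really $B$-equivariant) functions on the total space, i.e.\ $\kk[\ol{BwX}] = H^0\bigl(X(w)_P, \mc{V}(\kk[X])\bigr)$, the global sections over the Schubert variety of the bundle with fiber $\kk[X]$. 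Here $\kk[X]$ is a (possibly infinite-dimensional) $L$-module, on which $U_I$ acts trivially, so $\mc{V}(\kk[X])$ makes sense. This identification is the conceptual heart; it reduces the statement to a computation of sections of a homogeneous bundle over $X(w)_P$.

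Next I would use the isomorphism (\ref{eq:opencell}), $U(w)\times P \xrightarrow{\cong} BwP$, which descends to give an open cell $U(w)\cdot wP/P \subset X(w)_P$ isomorphic to $U(w)$, with $wP/P$ the base point. Restricting a section $s$ over $X(w)_P$ to this cell and evaluating the fiber at $wP/P$ gives its "leading term" in $wX\cong X$; concretely, the composite $\kk[\ol{BwX}] = H^0(X(w)_P, \mc{V}(\kk[X])) \to \kk[wX]$ is the restriction-to-$wX$ map, which is $U(w)$-equivariant for the natural $U(w)$-action (with $U(w)$ acting trivially on the target since $wX$ is the fiber over the fixed base point of the cell). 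Taking $U(w)$-invariants, I claim this map becomes an isomorphism onto $\kk[wX]$: a $U(w)$-invariant section is determined on the dense open cell $U(w)\cdot wP/P$, and on that cell a $U(w)$-invariant section is constant along the $U(w)$-orbit directions, hence determined by its value at $wP/P$, i.e.\ by an element of $\kk[wX]$; conversely such a value extends (by $U(w)$-invariance on the cell, then by the fact that a section on a dense open of a variety that is bounded — here it comes from a genuine global section after averaging / by properness of the collapsing, equivalently by the surjectivity in Lemma \ref{lem:surj} applied componentwise to a good or general filtration of $\kk[X]$) to a global $U(w)$-invariant section. The surjectivity statement of Lemma \ref{lem:surj} (restriction $H^0(G/P,\mc{V}(\la))\onto H^0(X(w)_P,\mc{V}(\la))$, together with vanishing of higher cohomology) is exactly what lets one lift and avoids any obstruction coming from the infinite-dimensionality of $\kk[X]$: filter $\kk[X]$ and apply the lemma degreewise.

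The second assertion is then formal: taking $w = w_0$ (or rather $w\in \mc{W}^I$ lifting $w_0$), or more simply noting $1\in \mc{W}^I$ and $\ol{B\cdot X} \supset X$, one has a natural inclusion $\kk[\ol{BwX}] \hookrightarrow \kk[wX]$? — no; the correct statement is that the surjection $\kk[\ol{BwX}]\onto \kk[wX]$ (restriction to the closed fiber $wX \subset \ol{BwX}$) admits a splitting given by the composite $\kk[wX] \cong \kk[\ol{BwX}]^{U(w)} \hookrightarrow \kk[\ol{BwX}]$ from the first part. Since this composite followed by restriction to $wX$ is the identity on $\kk[wX]$ (a $U(w)$-invariant section restricts to its own defining value), $\kk[X]\cong\kk[wX]$ is a direct summand of $\kk[\ol{BwX}]$ as a $\kk[X]$-module, the $\kk[X]$-module structure on $\kk[\ol{BwX}]$ coming from the algebra map $\kk[X]\cong\kk[wX] \xrightarrow{\text{split}} \kk[\ol{BwX}]$. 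I expect the main obstacle to be the careful bookkeeping in the surjectivity/extension step — making the reduction from the infinite-dimensional fiber $\kk[X]$ to finite-dimensional pieces rigorous (via an exhausting $G$- or $L$-stable filtration by finite-dimensional submodules) and invoking Lemma \ref{lem:surj} uniformly, rather than the geometry of the open cell, which is routine once (\ref{eq:opencell}) is in hand.
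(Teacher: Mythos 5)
Your injectivity argument is essentially the paper's: restrict to the open cell $U(w)\cdot wP/P \subset X(w)_P$ given by (\ref{eq:opencell}) and observe that a $U(w)$-invariant section is determined by its value at the base point. That part is fine.

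The surjectivity argument has two genuine gaps. First, the identification $\kk[\ol{BwX}] = H^0(X(w)_P, \mc{V}(\kk[X]))$ that you take as your starting point is not available here: a priori one only has an inclusion $\kk[\ol{BwX}] \hookrightarrow (\kk[\ol{BwP}]\oo\kk[X])^P = H^0(X(w)_P,\mc{V}(\kk[X]))$, since the image of a proper map always has fewer functions than the total space. Equality is precisely the content of Theorem \ref{thm:main}(1), which requires $G\cdot X$ to be good and, moreover, is proved using this very proposition -- so invoking it here is circular. Second, and more decisively, your extension step ("a value at $wP/P$ extends to a global $U(w)$-invariant section") is not justified by anything you cite: Lemma \ref{lem:surj} gives surjectivity of restriction from $G/P$ to $X(w)_P$ (of global sections, not $U(w)$-invariants), not surjectivity onto the fiber at the base point; and you appeal to a "good or general filtration of $\kk[X]$", but $X$ is not assumed good in this proposition, which holds for arbitrary closed $L$-stable $X\subset V$. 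What your proof never uses -- and must -- is the standing hypothesis that $\left.\psi\right|_V$ is injective. The paper reduces surjectivity of $\kk[\ol{BwX}]^{U(w)}\to\kk[wX]$ to surjectivity of $\kk[W]^{U(w)}\to\kk[wV]$, which in turn reduces to surjectivity of the linear map $(W^*)^{U(w)}\to w\cdot V^*$; the latter follows because $w^{-1}Uw\cap U^- \subset U_I^-$ (a consequence of $w\in\mc{W}^I$, see (\ref{eq:weylrep})) and $\left.\psi\right|_V^*\colon (W^*)^{U_I^-}\to V^*$ is onto, i.e.\ exactly because $\left.\psi\right|_V$ is injective. Without that input the claim is false in general, so the omission is not merely a matter of bookkeeping.
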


\begin{proof}
The inclusions $wX \subset \ol{BwX} \subset W$ give rise to a commutative diagram
\[
\xymatrix@R-0.5pc{ 
\kk[\ol{BwX}]^{U(w)} \ar[r]^{\quad f} & \kk[wX] \\
\kk[W]^{U(w)} \ar[u] \ar[r]^{\,\,g} & \kk[wV] \ar@{->>}[u]
}\] 
To show that $f$ is onto, it is enough to show that $g$ is so. For this, we show that the map $(W^*)^{U(w)} \to w \cdot V^*$ is onto. As $w^{-1} \cdot (W^*)^{U(w)}=(W^*)^{w^{-1} U w \,\cap\, U^-}$ and $w^{-1} U w \, \cap \, U^- \subset U_I^-$ (cf. (\ref{eq:weylrep})), this follows since the $L$-module map $\left.\psi \right|_{V}^*: (W^*)^{U^-_I} \to V^*$ is onto.

The morphism (\ref{eq:collapse}) induces an injective map of algebras
 \[\kk[\ol{BwX}] \hookrightarrow (\kk[\ol{BwP}] \oo \kk[X])^P.\]
The multiplication map (\ref{eq:opencell}) gives an open immersion into $\ol{BwP}$, inducing an injective map $\kk[\ol{BwP}]^{U(w)} \hookrightarrow \kk[wP]$. The previous maps give  
\[\kk[\ol{BwX}]^{U(w)} \hookrightarrow (\kk[\ol{BwP}]^{U(w)} \!\! \oo \kk[X])^P \hookrightarrow (\kk[wP] \oo \kk[X])^P \cong \kk[wX],\]
thus proving the injectivity of $f$.
\end{proof}

\begin{remark}\label{rem:unipara}
Putting $w=w_0 w_I^{-1}$ in Proposition \ref{prop:invariantalg}, and twisting by $w$ we obtain an isomorphism of $L$-algebras $\kk[G\cdot X]^{U_I^{-}} \xrightarrow{\cong} \kk[X]$.
\end{remark}

\subsection{Good saturations}\label{subsec:good} The following is our main tool for inducing the property of being good via saturations.

\begin{theorem}\label{thm:good}
\begin{itemize}
\item[(a)] The $G$-variety $G\cdot X$ is good if and only if the $L$-variety $X$ is good and the induced map $k[W] \to q_* \mc{O}_{G\times_P X}$ is onto. 
\item[(b)] Assume that $(V,X)$ is a good pair of $L$-varieties and $\left.\psi \right|_{V}$ is a split map of $L$-modules. If there is a good closed $G$-subvariety $Z \subset W$ with $G\cdot X \subset Z$, then $(\,Z\,, \,\, G\cdot X)$ is a good pair.
\item[(c)] Let $Y\subset V$ be a closed $L$-stable subvariety such that $(Y,X)$ is a good pair and $G\cdot Y$ is good. Then $(G\cdot Y\, , \, G\cdot X)$ is a good pair.
\end{itemize}
\end{theorem}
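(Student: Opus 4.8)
The plan is to prove the three parts of Theorem \ref{thm:good} in order, since part (b) will use part (a) applied to suitable auxiliary varieties and part (c) will use part (b). Throughout I will exploit the twisted version of Proposition \ref{prop:invariantalg}: taking $w = w_0 w_I^{-1}$ gives an isomorphism of $L$-algebras $\kk[G\cdot X]^{U_I^-} \xrightarrow{\cong} \kk[X]$ (Remark \ref{rem:unipara}), and combined with Lemma \ref{lem:transfer} (applied to $L = G$, $H = U_I^-$, so $N_G(U_I^-) \supseteq L$) this identifies the deformation target in \eqref{eq:grA} for $A = \kk[G\cdot X]$ with $(\kk[X] \oo \kk[G/U_I^-])^L$ or a similar expression involving $\kk[X]$ and $\kk[G/U]$. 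The strategy for detecting good filtrations is uniformly via the criterion around \eqref{eq:grA}: a $G$-algebra $A$ has a good filtration iff the injection $\gr A \hookrightarrow (A^{U^-}\oo \kk[G/U])^T$ of \cite[Theorem 16]{gross} is an isomorphism, equivalently (by Corollary \ref{cor:goodpair} and Lemma \ref{lem:gr}) iff an appropriate map on $U$-invariants is surjective.

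For part (a): the ``only if'' direction is immediate — if $G\cdot X$ is good then $\kk[G\cdot X]$ has a good filtration, hence (Corollary \ref{cor:goodpair}, or directly) the $U_I^-$-invariants surject onto those of any $L$-stable quotient; taking $U_I^-$-invariants of $\kk[W] \twoheadrightarrow \kk[G\cdot X]$ and using Remark \ref{rem:unipara} shows $\kk[X] = \kk[G\cdot X]^{U_I^-}$ is a good $L$-algebra, and separately the surjectivity $\kk[W] \to q_*\mc{O}_{G\times_P X}$ is equivalent to $\kk[G\cdot X]$ being the whole pushforward, which must be checked to hold automatically — actually the content here is that $\kk[G\cdot X] = \Gamma(G\times_P X, \mc{O}) = (\kk[G]\oo\kk[X])^P$ and normality-type considerations. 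For the ``if'' direction: assuming $X$ good and the pushforward map onto, I want to show $\kk[G\cdot X] = (\kk[G]\oo\kk[X])^P$ has a good filtration. Here I would use Proposition \ref{prop:mathieu} — $\kk[G]$ has a good filtration as a $G\times G$-module and $\kk[X]$ has one as an $L$-module — together with the fact (Lemma \ref{lem:surj}-type arguments, or Frobenius reciprocity/induction from $P$) that $P$-fixed points of a tensor of good-filtered modules behave well, i.e. $H^{>0}(G/P, \mc{V}(\kk[X])) = 0$ and $H^0$ has a good filtration; the surjectivity of $\kk[W] \to q_*\mc{O}$ ensures $\kk[G\cdot X]$ equals this $H^0$ rather than being a proper subalgebra.

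For part (b): since $\left.\psi\right|_V$ is split, I get a splitting on the level of the relevant $U_I^-$-invariant subalgebras, and I will first apply part (a) to $X = V$ to deduce $G\cdot V$ is good (using that $V$ is good — it is, being part of a good pair — and that the collapsing of $V$ has surjective pushforward, which is where the splitting of $\psi|_V$ enters, guaranteeing $\kk[W] \to \kk[G\cdot V]$ and the pushforward agree). Then, given the good closed $G$-subvariety $Z$ with $G\cdot X \subset Z$, I must show $I_{G\cdot X} \subset \kk[Z]$ has a good filtration; by Corollary \ref{cor:goodpair} it suffices to show $\kk[Z]^U \to \kk[G\cdot X]^U$ is surjective. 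I would factor this through $\kk[G\cdot V]$ if $G\cdot X \subset G\cdot V \subset Z$ (which holds since $X\subset V$), reducing to two surjectivities: $\kk[Z]^U \to \kk[G\cdot V]^U$ (from $Z$ good and $G\cdot V$ good, via \eqref{eq:grA} and the deformation comparing $\kk[Z]$ and $\kk[G\cdot V]$) and $\kk[G\cdot V]^U \to \kk[G\cdot X]^U$; the latter, after passing to $U_I^-$-invariants via Remark \ref{rem:unipara} and then taking $U\cap L$-invariants, becomes the surjectivity $\kk[V]^{U\cap L} \to \kk[X]^{U\cap L}$ — but $(V,X)$ being a good pair gives exactly that $\kk[V] \to \kk[X]$ is surjective on $(U\cap L)$-invariants (Corollary \ref{cor:goodpair} for the group $L$).

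For part (c): I will apply part (b) with the good $G$-variety $Z := G\cdot Y$, which is given to be good, and with $G\cdot X \subset G\cdot Y$ holding because $X\subset Y$. The only hypothesis of part (b) left to verify is that $\left.\psi\right|_V$ is split — but $(Y,X)$ a good pair forces $Y$, hence (being a submodule? not necessarily) — here I should instead note that $(V,Y)$ or the structure of $Y\subset V$ combined with $G\cdot Y$ good lets me run the argument of part (b) directly, replacing the role of $G\cdot V$ by $G\cdot Y$: factor $\kk[G\cdot Y]^U \to \kk[G\cdot X]^U$ through $U_I^-$-invariants to reduce to $\kk[Y]^{U\cap L} \to \kk[X]^{U\cap L}$, which is onto since $(Y,X)$ is a good pair for $L$, and combine with $\kk[G\cdot Y]$ being good (given) to conclude $(G\cdot Y, G\cdot X)$ is a good pair via Corollary \ref{cor:goodpair}.

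The main obstacle I anticipate is part (a), specifically controlling the subalgebra $\kk[G\cdot X] \subseteq (\kk[G]\oo\kk[X])^P = H^0(G/P, \mc{V}(\kk[X]))$: showing that good filtrations descend through the $P$-invariants functor and that the image of $\kk[W]$ is all of the sections requires the surjectivity-of-pushforward hypothesis to be used precisely, and the interplay between the (possibly non-reduced or non-normal) structure of $G\times_P X$ and its image needs care. Once part (a) is secured, parts (b) and (c) are bookkeeping with $U$- and $U_I^-$-invariants and repeated application of Corollary \ref{cor:goodpair}, Lemma \ref{lem:gr}, Proposition \ref{prop:mathieu}, and the deformation comparison \eqref{eq:grA}.
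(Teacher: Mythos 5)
Your outline of part (c), once you abandon the false start of invoking part (b), coincides with the paper's proof (reduce via Corollary \ref{cor:goodpair} and Proposition \ref{prop:invariantalg} to the $L$-surjectivity given by the good pair $(Y,X)$). But parts (a) and (b) have genuine gaps.

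\emph{Part (a), ``only if''.} You correctly extract that $X$ is good from Remark \ref{rem:unipara}, but for the surjectivity $\kk[W]\to q_*\mc{O}_{G\times_P X}$ you write that it ``must be checked to hold automatically'' via ``normality-type considerations.'' It is not automatic, and $G\cdot X$ need not be normal. The paper's argument is the following: the inclusion $\kk[G\cdot X]\hookrightarrow q_*\mc{O}_{G\times_P X}$ induces an isomorphism on $U_I^-$-invariants (by the proof of Proposition \ref{prop:invariantalg}), hence on $U^-$-invariants by passing through $\kk[X]^{U_L^-}$; since $\kk[G\cdot X]$ has a good filtration, Lemma \ref{lem:gr} upgrades that isomorphism on invariants to surjectivity of the full $G$-map. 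Your sketch has no substitute for this use of Lemma \ref{lem:gr}.

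\emph{Part (b).} Your plan is to first deduce that $G\cdot V$ is good from the hypotheses ``$V$ good'' and ``$\psi|_V$ split,'' and then factor $\kk[Z]^U\to\kk[G\cdot X]^U$ through $\kk[G\cdot V]^U$. This fails on two counts. First, ``$V$ good and $\psi|_V$ split'' does not imply $G\cdot V$ good: in Example \ref{ex:counter}, $V=\bigwedge^3\kk^5$ is good, $\psi|_V$ is an isomorphism (Lemma \ref{lem:iso}(b)), yet $G\cdot V$ is not normal and hence not good. Second, $G\cdot V\subset Z$ is not a hypothesis; you are given only $G\cdot X\subset Z$, and $G\cdot V$ can easily escape $Z$. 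Finally, even if both were available, ``$Z$ good and $G\cdot V$ good'' does not give surjectivity $\kk[Z]^U\to\kk[G\cdot V]^U$ — you would need $(Z,G\cdot V)$ to be a good pair, which is not assumed. The correct route (the paper's) bypasses $G\cdot V$ entirely: by Corollary \ref{cor:goodpair} and Proposition \ref{prop:invariantalg}, it suffices to show $\kk[W]^{U^-}\to\kk[X]^{U_L^-}$ is onto; this factors through $\kk[V]^{U_L^-}\to\kk[X]^{U_L^-}$ (onto since $(V,X)$ is a good pair) and $\kk[W]^{U^-}\to\kk[V]^{U_L^-}$ (onto because the splitting of $\psi|_V$ forces $\Sym((W^*)^{U_I^-})\to\Sym(V^*)$ to be a split $L$-map, so its $U_L^-$-invariants surject). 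The splitting hypothesis on $\psi|_V$ is used only to produce this surjection on invariants, never to assert anything about the full saturation $G\cdot V$.
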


\begin{proof}
Assume that $G\cdot X$ is good. By Remark \ref{rem:unipara} and \cite{donkinunip}, we obtain that $X$ is good. From the proof of Proposition \ref{prop:invariantalg} we have $\kk[G \cdot X]^{U^-_I} \xrightarrow{\,\cong \,}  (q_* \mc{O}_{G\times_P X})^{U^-_I}$, which yields isomorphisms
\begin{equation}\label{eq:unip}
\kk[G \cdot X]^{U^-} \xrightarrow{\,\,\cong \,\,}  (q_* \mc{O}_{G\times_P X})^{U^-} \xrightarrow{\,\,\cong \,\,} \kk[X]^{U_L^-}.
\end{equation}
Therefore, the map $\kk[G \cdot X] \to q_* \mc{O}_{G\times_P X}$ is onto by Lemma \ref{lem:gr}.

Now assume that $X$ is good and $\kk[G \cdot X] \to q_* \mc{O}_{G\times_P X}$ is onto (hence, an isomorphism). By \cite[Theorem 3]{donkinunip}, Proposition \ref{prop:mathieu} and \cite[Proposition 1.2e (iii)]{donkinconj} the $G$-module $q_* \mc{O}_{G\times_P X}= (\kk[G/U_I] \oo \kk[X])^L$ has a good filtration, thus $G\cdot X$ is good.

For part (b), by Corollary \ref{cor:goodpair} the claim is equivalent to the map $\kk[Z]^{U^-} \to \kk[G\cdot X]^{U^-}$ being onto. By Proposition \ref{prop:invariantalg}, it is enough to show that the map $\kk[W]^{U^-} \to \kk[X]^{U_L^-}$ is onto. By Corollary \ref{cor:goodpair}, the map $\kk[V]^{U_L^-} \to \kk[X]^{U_L^-}$ is onto. Hence, the claim follows if we show that the map $\kk[W]^{U^-} \to \kk[V]^{U_L^-}$ is onto. For this, we prove that the restriction of the latter map to the subalgebra $(\Sym ((W^*)^{U_I^-}))^{U_L^-}$ is already onto. 

Since the $L$-map $\left.\psi \right|_{V}$ is split, then so is $\Sym ((W^*)^{U_I^-}) \to \Sym(V^*)$. Therefore, taking $U_L^-$-invariants yields a surjective map.

Now we consider part (c). By Corollary \ref{cor:goodpair} it is enough to see that the morphism $\kk[G\cdot Y]^{U^-} \to \kk[G\cdot X]^{U^-}$ is surjective. By Proposition \ref{prop:invariantalg}, this is equivalent to showing that $\kk[Y]^{U^-_L} \to \kk[X]^{U^-_L}$ is onto. This follows again by Corollary \ref{cor:goodpair}.
\end{proof}

\begin{remark}\label{rem:xi}
Assume $V$ is good and put $\eta= \V(V^*)$ and  $\xi = \V(W^*)/\eta$. Then:
\begin{itemize}
\item[(a)] $G\cdot V$ is good if and only if $H^{i}(G/P, \, \bigwedge^i \xi)=0$, for all $i>0$, by Theorem \ref{thm:good} (a), \cite[Theorem 5.1.2]{weymanbook} and Remark \ref{rem:normalize} below. 
\item[(b)] Assume further that $W$ has Weyl filtration and the $L$-map $\left.\psi \right|_{V}$ is a split. Then using Lemma \ref{lem:gr} we see as in the proof above that the induced map $W^* \to H^0(G/P, \, \eta)$ is onto. Hence, by Theorem \ref{thm:good} (a), $G\cdot V$ is good if and only if the algebra $q_* \mc{O}_{G\times_P V} \cong H^0(G/P, \, \Sym \eta)$ is generated by $H^0(G/P, \, \eta)$.
\end{itemize}
\end{remark}

\begin{corollary}\label{cor:largep}
If $\cha \kk > \dim W$ and $ \cha \kk\geq \langle \chi+ \rho , \al^\vee \rangle$ for all weights $\chi$ of $W$ and all $\al \in \Phi_+$, then $(\,W\,, \, G\cdot V\,)$ is a good pair.
\end{corollary}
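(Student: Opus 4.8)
The plan is to derive Corollary \ref{cor:largep} from Theorem \ref{thm:good}(b) applied with $X = V$ and $Z = W$, so the task reduces to checking two hypotheses under the stated numerical bounds on $\cha \kk$: first, that $W$ itself is good; second, that $(V,V)$ is a good pair of $L$-varieties (i.e. $V$ is good as an $L$-variety), and that $\left.\psi\right|_V$ is split as an $L$-module map. Since $X = V$ the defining ideal is zero, so ``$(V,V)$ is a good pair'' is literally just ``$V$ is good''; and with $Z = W$ good and $G\cdot V \subset W$ trivially, Theorem \ref{thm:good}(b) will then give that $(W, G\cdot V)$ is a good pair, which is the assertion.

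To verify that $W$ is good, I would invoke Lemma \ref{lem:good}(1): the hypothesis $\langle \chi + \rho, \al^\vee\rangle \leq \cha \kk$ for all weights $\chi$ of $W$ and all $\al \in \Phi_+$ is exactly the condition there (applied to $W^*$, whose weights are the negatives — one should be slightly careful here, but since the condition is required for \emph{all} weights $\chi$ of $W$ and the dual simply reflects the weight set, and since one really wants $\Sym_d W^*$ to have good filtrations, I would phrase the input as: by Lemma \ref{lem:good}(1) the module $W$ — equivalently $W^*$, after the standard sign bookkeeping — has a good filtration and indeed is good because $\cha\kk > \dim W$ lets us also apply Lemma \ref{lem:good}(2) to all symmetric powers $\Sym_i W^*$ for $i \le \dim W$, which together with Proposition \ref{prop:mathieu} handles all $\Sym_d W^*$). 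Concretely: Lemma \ref{lem:good}(1) gives $\bigwedge W^*$ a good filtration (each exterior power is a submodule-quotient of a tensor power whose weights still satisfy the bound, or more directly one applies (1) to the finitely many weights appearing), and Lemma \ref{lem:good}(4) then says $\bigwedge W$ is tilting; with $\cha\kk > \dim W \geq i$, Lemma \ref{lem:good}(2) gives $\Sym_i W^*$ a good filtration for every relevant $i$, so $W$ is good by definition.

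For the $L$-side: the restriction of the weight bound from $\Phi_+$ to the sub-root-system $\Phi_L^+ \subset \Phi_+$ still holds, and $\dim V \le \dim W$, so the identical argument with $G$ replaced by $L$ shows $V$ is a good $L$-module, hence $V$ is a good $L$-variety. It remains to see that $\left.\psi\right|_V \colon V \to ((W^*)^{U_I^-})^*$ is a split $L$-module map. Here I would argue that in this characteristic range both source and target are good $L$-modules with compatible Weyl filtrations, or more simply: an injective map of $L$-modules $M \hookrightarrow N$ with $N$ having a good filtration and $M$ having a Weyl filtration splits provided the relevant $\Ext^1_L$ vanishing holds, which under the bound $\langle \chi + \rho, \al^\vee\rangle \le \cha\kk$ is guaranteed because all modules in sight are ``low weight'' (e.g. via the linkage/sum formula, or because $V$ is in fact tilting in this range and a map from a tilting module into a good module whose image contains a copy of $V$ as a direct summand splits). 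The cleanest route is probably: in this characteristic range $V \cong \bigoplus \Delta_L(\la_i)$-type modules are tilting, $(W^*)^{U_I^-}$ has a good filtration, so $\Ext^1_L(V, \ker) = 0$ for any good-filtered $\ker$, giving the splitting of the surjection $((W^*)^{U_I^-})^* \twoheadrightarrow$ (dual statement), equivalently the injection $\left.\psi\right|_V$ is split.

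The main obstacle I anticipate is the bookkeeping around the splitting of $\left.\psi\right|_V$: one must confirm that the numerical bound is strong enough to force the $\Ext^1_L$-vanishing that yields a splitting, rather than merely the existence of good filtrations on the two modules. I expect this is handled either by noting that in the given range all the $L$-modules involved ($V$, $(W^*)^{U_I^-}$, and their quotients/kernels) are tilting — so that any injection between them that is split over the highest-weight spaces is split — or by a direct $\Ext$ computation using Lemma \ref{lem:good}(1) applied to $L$ together with \cite[Proposition II.4.13, II.4.16]{jantzen}. The verification that $W$ is good and $V$ is good is essentially a direct citation of Lemma \ref{lem:good}, so the real content is this splitting check; once it is in place, Theorem \ref{thm:good}(b) closes the argument immediately.
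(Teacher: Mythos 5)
Your overall route is exactly the paper's: apply Theorem~\ref{thm:good}(b) with $X=V$ and $Z=W$, which reduces the claim to (i) $W$ good, (ii) $V$ good, and (iii) $\left.\psi\right|_V$ split. The verification of (i) and (ii) via Lemma~\ref{lem:good} under the given bounds matches the paper.

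The gap is in step (iii). The paper does not argue the splitting by an $\Ext^1_L$-vanishing or by tracking Weyl/good filtrations on $V$, $((W^*)^{U_I^-})^*$, and the cokernel, as you attempt. Instead it invokes \cite[Section~II.5.6]{jantzen}: under the hypothesis $\cha\kk\ge\langle\chi+\rho,\al^\vee\rangle$ for all weights $\chi$ of $W$ and all $\al\in\Phi_+$, the $G$-module $W$ is \emph{semi-simple} (and likewise $V$ as an $L$-module). Semi-simplicity makes the splitting immediate: by Lemma~\ref{lem:iso}(a), $\psi\colon W^{U_I}\to ((W^*)^{U_I^-})^*$ is an $L$-isomorphism, and since $V$ is a direct summand of the semi-simple $L$-module $W^{U_I}$, its restriction $\left.\psi\right|_V$ is split injective. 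Your proposed "cleanest route" is also not quite right as stated: to split the injection $V\hookrightarrow N$ one needs $\Ext^1_L(C,V)=0$ where $C$ is the cokernel, and you would have to show $C$ has a Weyl filtration — a quotient of a Weyl-filtered module need not inherit one — whereas you instead discuss $\Ext^1_L(V,\ker)$, which controls the splitting of a surjection, not of the injection in question. You flag this step as the one you are least sure of, and indeed it is where the argument as written breaks down; the missing ingredient is the semi-simplicity statement from \cite[II.5.6]{jantzen}, which bypasses all of this bookkeeping.
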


\begin{proof}
By Lemma \ref{lem:good} parts (1)--(3), we see that both $V$ and $W$ are good. By \cite[Section 5.6]{jantzen}, both $V$ and $W$ are semi-simple, therefore $\left.\psi \right|_{V}$ is split injective (see Lemma \ref{lem:iso}). The conclusion now follows from Theorem \ref{thm:good} (b).
\end{proof}

If $W$ is as in (\ref{eq:mainsetup}), then putting $X=V$ and $Z=W$ in Theorem \ref{thm:good} (b), we see that $(W, \,G\cdot V)$ is a good pair whenever $\cha \kk > \max \{ \dim \Delta_{G}(\la_i) \,| \, 1\leq i \leq n\}$ by Proposition \ref{prop:mathieu} and Lemma \ref{lem:good}. In particular, $G\cdot V$ is then good as claimed in the Introduction.

\subsection{Singularities via Schubert collapsing}\label{subsec:sing}

Now we turn to Theorems \ref{thm:introsat} and \ref{thm:intromain}. The following result describes the behavior of singularities under collapsing, and it strengthens \cite[Proposition 1 and Theorem 3]{Kempf76} when $w=w_0 w_I^{-1}$ (i.e.\ when $\ol{BwX} = G\cdot X$) in the characteristic zero case as well. 

\begin{theorem}\label{thm:main}
Assume that $G\cdot X$ is good. For $w\in \mc{W}^I$, the $B$-variety $\ol{BwX}$ is $ww_I$-excellent. Furthermore, the following statements hold:
\begin{enumerate}
\item The map $\mc{O}_{\ol{BwX}} \, \xrightarrow{\,\cong\,} \, \mathbf{R} q_*\mc{O}_{\ol{BwP}\times_P X}$ is an isomorphism.
\item $\ol{BwX}$ is normal if and only if $X$ is so.
\item If $\cha \kk = 0$, then $\ol{BwX}$ has rational singularities if and only if so does $X$.
\item If $X$ is an $L$-submodule of $V$, then $G\cdot X$ is strongly $F$-regular (resp.\ of strongly $F$-regular type) when $\cha \kk > 0$ (resp.\ when $\cha \kk =0$), and $\ol{BwX}$ is $F$-rational when $\cha \kk> 0$.
\end{enumerate}
\end{theorem}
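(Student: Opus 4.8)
The plan is to build the whole statement on top of Proposition \ref{prop:invariantalg} and Lemma \ref{lem:surj}, which together let us transfer information between $\ol{BwX}$, the bundle $\ol{BwP}\times_P X$, the fiber $wX$, and the Schubert varieties $X(ww_I)$. First I would record the geometric setup: the collapsing map restricts to a proper, surjective morphism $q\colon \ol{BwP}\times_P X \to \ol{BwX}$, and the bundle projection $\pi\colon \ol{BwP}\times_P X \to X(w)_P$ has fibers $X$ with $\kk[X]$ good (by hypothesis, via Remark \ref{rem:unipara} and Theorem \ref{thm:good}(a)). For the $ww_I$-excellence claim, I would use that $\kk[\ol{BwP}\times_P X] = (\kk[\ol{BwP}]\otimes\kk[X])^P$; since $\kk[X]$ has a good filtration as an $L$-module and $\kk[\ol{BwP}]^{U(w)}\cong \kk[wP]$ with $\kk[\ol{BwP}]$ carrying the relevant excellent filtration structure, one combines Proposition \ref{prop:mathieu}, Lemma \ref{lem:surj} and the excellent-filtration formalism of \cite{vanderkallen} to see that $\kk[\ol{BwP}\times_P X]$, and hence (by the surjectivity half of Proposition \ref{prop:invariantalg}, together with Lemma \ref{lem:gr}) its quotient $\kk[\ol{BwX}]$, has a $ww_I$-excellent filtration. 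This simultaneously shows that $\kk[W]\to\kk[\ol{BwX}]$ and $\kk[\ol{BwX}]\to\kk[wX]\cong\kk[X]$ behave well, and that $\kk[\ol{BwX}]$ is a good-filtered restriction of the section ring of bundles $\V(\la)$ on $X(ww_I)$.

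Next, part (1): I would prove $\mc{O}_{\ol{BwX}}\xrightarrow{\cong}\mathbf{R}q_*\mc{O}_{\ol{BwP}\times_P X}$ by the standard Kempf-style argument adapted to the Schubert setting. Using $\mathbf{R}\pi_*\mc{O}$-computations — here the key input is the quasi-isomorphism $\mathbf{R}\pi^w_*\mc{L}(\la)\cong\V(\la)$ from the proof of Lemma \ref{lem:surj}, applied fiberwise over the $\Sym$ of the relevant bundle, together with the vanishing $H^i(X(w)_P,\V(\la))\cong H^i(X(ww_I),\mc{L}(\la))$ — one gets that $\mathbf{R}^iq_*\mc{O}=0$ for $i>0$ and that $q_*\mc{O}_{\ol{BwP}\times_P X}$ is the algebra $\bigoplus_\la H^0(X(w)_P,\V(M_\la))$ whose global sections we already understand; the good filtration forces the natural map from $\kk[\ol{BwX}]$ to be an isomorphism (cokernel and higher cohomology vanish on $U$-invariants, then Lemma \ref{lem:gr} applies). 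Once (1) is in hand, parts (2) and (3) are formal: normality of $\ol{BwX}$ is equivalent to normality of the bundle $\ol{BwP}\times_P X$ (it is a fiber bundle with fiber $X$ over a normal base $X(w)_P$, and Schubert varieties are normal by \cite{projnormschub}), which holds iff $X$ is normal; and in characteristic zero, $\ol{BwP}\times_P X$ has rational singularities iff $X$ does (again a fiber-bundle statement over the smooth-enough normal base, using that Schubert varieties have rational singularities), so by (1) and the fact that a proper birational-onto-its-normalization map with $\mathbf{R}q_*\mc{O}=\mc{O}$ preserves rational singularities, the equivalence for $\ol{BwX}$ follows.

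For part (4), with $X\subset V$ an $L$-submodule: the $F$-rationality of $\ol{BwX}$ and the strong $F$-regularity of $G\cdot X$ is the part I expect to be the main obstacle, and it is where Section \ref{sec:fsing} gets used in earnest. The strategy is to realize $\kk[\ol{BwX}]$, via Proposition \ref{prop:invariantalg} and the cone construction of Lemma \ref{lem:cone}, as a direct summand (as a module over itself after passing to $U$-invariants and the $T$-graded Cartan algebra) of an algebra built from $\kk[\ol{BwB}]^U_+$-type section rings tensored with $\kk[X]$; since $X$ is a linear space, $\kk[X]$ is a polynomial ring, hence strongly $F$-regular, and strong $F$-regularity (resp.\ $F$-rationality) passes to the tensor product and descends along split inclusions and to direct summands. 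Concretely: Corollary \ref{cor:lpure} and Lemma \ref{lem:cartanalg} give the $L$-$F$-purity/strong $F$-regularity of $\kk[G]^{U_I\times U}$, and Lemma \ref{lem:transfer} together with Remark \ref{rem:unipara} expresses $\kk[G\cdot X]$ as $(\kk[G/U_I]\otimes\kk[X])^L$; the $G$-$F$-purity machinery of \cite{hashiunip} then upgrades the good filtration on $\kk[G\cdot X]$ to strong $F$-regularity in positive characteristic, and the reduction-mod-$p$ argument of Lemma \ref{lem:cone} (flat $\bb{Z}$-form plus \cite[Theorem 5.5]{hh}) gives the strongly-$F$-regular-type statement in characteristic zero. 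Finally, for $\ol{BwX}$ with $w$ not maximal one loses strong $F$-regularity but retains $F$-rationality: here one uses that $\ol{BwX}$ is a direct summand of $\ol{BwP}\times_P X$ (Proposition \ref{prop:invariantalg}), which fibers over the globally $F$-regular Schubert variety $X(w)_P$ (see \cite{schubfreg}, \cite{hashischub}, \cite{smith2}) with strongly $F$-regular fiber $X$, so the total space is $F$-rational, and $F$-rationality passes to direct summands that are themselves normal and Cohen--Macaulay — the latter two properties coming from the $ww_I$-excellence established at the start together with parts (1)--(2). The delicate point throughout is keeping the $F$-splittings compatible with both the $G$- (or $B$-)action and the grading so that Hashimoto's quasi-$F$-regularity criteria apply; this is exactly the role played by $G$-$F$-purity and by the saturatedness of the weight semigroups in Corollary \ref{cor:lpure}.
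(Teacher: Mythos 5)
Your parts (1) and (2) match the paper's approach: vanishing of $\mathbf{R}^iq_*\mc{O}$ via Lemma \ref{lem:surj} and induction on a good filtration, the isomorphism via the surjectivity supplied by Theorem \ref{thm:good}(a), and normality from the isomorphism plus Proposition \ref{prop:invariantalg}. The divergences begin at part (3) and become a genuine gap in part (4).

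For part (3) you invoke ``a proper birational-onto-its-normalization map,'' but $q$ need not be birational (it can have positive-dimensional fibers; the paper flags this explicitly in Remark \ref{rem:normalize}). What you probably want is Kov\'acs' theorem (\cite[Theorem 1]{kovacs}), which needs only $\mathbf{R}q_*\mc{O}\cong\mc{O}$ plus rational singularities of the source; that route would work once you establish rational singularities of $\ol{BwP}\times_P X$ (a vector bundle over a Schubert variety, both with rational singularities). The paper instead proves this direction via the Grosshans--Popov filtration of Section \ref{sec:gropop}: compute $\gr\kk[\ol{BwX}] \cong (\kk[\ol{Bww_IB}]^U_+ \oo \kk[X]^{U^-_L})^T$, show the graded algebra has rational singularities (Lemma \ref{lem:cone}, \cite[Theorem 6]{popov}, Boutot), and conclude by Elkik's deformation theorem. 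The reverse implication in (3) uses the direct-summand property of Proposition \ref{prop:invariantalg} and Boutot, as you suggest.

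The real gap is in part (4). You claim that ``$F$-rationality passes to direct summands that are themselves normal and Cohen--Macaulay'' --- this is not a known fact. Unlike strong $F$-regularity and $F$-purity, $F$-rationality is not known to descend along arbitrary direct summands, and the paper does not use any such descent. You also misread Proposition \ref{prop:invariantalg}: it exhibits $\kk[X]$ as a direct summand of $\kk[\ol{BwX}]$, not $\kk[\ol{BwX}]$ as a direct summand of the bundle's coordinate ring (by part (1), the latter two are in fact isomorphic as rings of global sections, so there is nothing to descend along there). The paper's actual mechanism for $F$-rationality of $\ol{BwX}$ is again the Grosshans--Popov filtration: $\gr\kk[\ol{BwX}]$ is strongly $F$-regular (Lemma \ref{lem:cone}, \cite[Corollary 4.14]{hashiunip} for $\kk[X]^{U^-_L}$, and \cite[Theorem 5.5]{hh}), and then $F$-rationality of $\kk[\ol{BwX}]$ follows from the flat deformation using the deformation theorem for $F$-rationality \cite[Theorem 4.2]{hh}. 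Your sketch of the strong $F$-regularity of $G\cdot X$ itself (factoriality, Gorensteinness, $L$-$F$-purity via Corollary \ref{cor:lpure}, Hashimoto's criteria) does track the paper, but without the deformation argument you cannot get $F$-rationality of $\ol{BwX}$ for general $w$.
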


\begin{proof}
For part (1), observe that by (\ref{eq:unip}) a good filtration of $\kk[X]$ has composition factors $\Delta_L(\la)$ with such that $\la\in X(T)_+$. By Lemma \ref{lem:surj}, we obtain by induction on filtration that $\mathbf{R}^i q_*\mc{O}_{\ol{BwP}\times_P X} = 0$, for all $i>0$. The map  $\mc{O}_{\ol{BwX}} \to q_*\mc{O}_{\ol{BwP}\times_P X}$ is an isomorphism, since the composition $\kk[W] \to q_*\mc{O}_{G\times_P X} \to q_*\mc{O}_{\ol{BwP}\times_P X}$ is surjective by Theorem \ref{thm:good} (a) and Lemma \ref{lem:surj}.

For part (2), if $\ol{BwX}$ is normal, then by Proposition \ref{prop:invariantalg} so is $X$. Conversely, if $X$ is normal, then so is $\ol{BwX}$ by the normality of $X(w)_P$ \cite{projnormschub} and $\mc{O}_{\ol{BwX}} \cong q_*\mc{O}_{\ol{BwP}\times_P X}$.

Next, we prove the statements regarding $\ol{BwX}$ in part (3) and (4). If $\ol{BwX}$ has rational singularities, then due to the direct summand property in Proposition \ref{prop:invariantalg} so does $X$ according to \cite[Th\'eor\`eme]{boutot}.

Consider the filtration $F^i \kk[X]$ as in Section \ref{sec:gropop}. This gives an exhaustive filtration on $A:=\kk[\ol{BwX}]$ by $F^i A:= (\kk[\ol{BwP}] \oo F^i \kk[X])^P$. The associated graded is
\[\gr A = (\kk[\ol{BwP}]^{U_I} \oo \gr \kk[X])^L \,\stackrel{(\ref{eq:grA})}{\cong}\, (\kk[\ol{BwP}]^{U_I} \oo (\kk[L/U_L]\oo \kk[X]^{U^-_L})^T)^L \cong\]
\[\cong ((\kk[\ol{BwP}]^{U_I} \oo \kk[L/U_L])^L\oo \kk[X]^{U^-_L})^T \cong (\kk[\ol{BwP}]^U\oo \kk[X]^{U^-_L})^T = (\kk[\ol{Bww_IB}]^U_+\oo \kk[X]^{U^-_L})^T, \]
where the last equality is a consequence of $\ol{BwP}=\ol{Bww_I B}$ and (\ref{eq:unip}), and the isomorphism before it follows from Lemma \ref{lem:transfer}. 

Now assume that $X$ has rational singularities when $\cha \kk = 0$ (resp.\ $X$ is an $L$-module when $\cha \kk >0$). By \cite[Theorem 6]{popov} (resp.\ by \cite[Corollary 4.14]{hashiunip}),  $\kk[X]^{U^-_L}$ has rational singularities (resp.\ is strongly $F$-regular). By Lemma \ref{lem:cone} and (\ref{eq:f-sing}), $\kk[\ol{Bww_IB}]^U_+$ has rational singularities (resp.\ is strongly $F$-regular). Hence, $\gr A$ has rational singularities (resp.\ is strongly $F$-regular) by \cite{boutot} (resp.\ \cite[Theorem 5.5]{hh}). As in \cite[Section 5]{popov}, the algebra $\gr A$ is a flat deformation of $A$. Therefore, $A$ has rational singularities by \cite{elkik} (resp.\ is $F$-rational by (\ref{eq:f-sing}) and \cite[Theorem 4.2]{hh}).

Now we show that $G\cdot X$ is strongly $F$-regular in part (4). Let $G' = \tilde{G} \times Z$, with $\tilde{G}$ a covering of $[G,G]$ and $Z \subset T$ a torus so that $G$ is a quotient of $G'$. We can view $W$ as a $G'$-representation. Since $T \subset L$, we have $G\cdot X = \tilde{G} \cdot X$. Moreover, we can lift $P$ to a parabolic $P'$ of $\tilde{G}$ with unipotent radical $U'_I$ and Levi subgroup $L'$. We have $W^{U'_I} = W^{U_I}$ and $(W^*)^{U'^-_I} = (W^*)^{U^-_I}$. Furthermore, $G\cdot X$ (resp.\ $X$) is $G$-good (resp.\ $L$-good) if and only if it is $\tilde{G}$-good (resp.\ $L'$-good) \cite[Section 3]{donkin}. This shows that we can assume that $G$ is simply connected and semisimple. 

Assume that $\cha \kk > 0$. Since $X$ and $G$ are good, using \cite[Theorem 3]{donkinunip} and Proposition \ref{prop:mathieu} we have
\[q_* \mc{O}_{G\times_P X}= (\kk[G/U_I] \oo \kk[X])^L = \left((\kk[G/U_I] \oo \kk[X])^{U_L}\right)^T.\]
As $T$ is linearly reductive, by \cite[Theorem 5.5]{hh} the claim follows once we show that $R:=(\kk[G/U_I] \oo \kk[X])^{U_L}$ is strongly $F$-regular. Since $\kk[X]$ and $\kk[G]$ are factorial rings (see \cite{popovufd}), so is $R$ and $\kk[G]^{U \times U_I}$ (see \cite[Theorem 3.17]{popvin}). In particular, since $\kk[G]^{U \times U_I}$ is Cohen--Macaulay by Corollary \ref{cor:lpure} and (\ref{eq:f-sing}), it is Gorenstein \cite{murthy}.

We have an action of $G$ on $R$ induced from its left action on $\kk[G]$. We have an isomorphism $R \cong (\kk[L/U_L] \oo \kk[G/U_I] \oo \kk[X])^L$, which is easily seen to be $G$-equivariant. The algebra $\kk[L/U_L] \oo \kk[G/U_I] \oo \kk[X]$ has a good filtration as a $G\times L$-module, as seen using \cite[Theorem 3]{donkinunip} and Proposition \ref{prop:mathieu}. By  \cite[Proposition 1.2e (iii)]{donkinconj}, we obtain that $R$ has a good filtration as a $G$-module. We consider the invariant ring $R^{U}$. By Corollary \ref{cor:lpure}, \cite[Theorem 5.2]{hashisurjgraded} and \cite[Theorem 4.4 and Lemma 4.7]{hashiunip}, the $\bb{Z}_{\geq 0}$-graded ring $\kk[G]^{U \times U_I}  \oo \kk[X]$ is Gorenstein, strongly $F$-regular, and $L$-$F$-pure. Then \cite[Corollary 4.13]{hashiunip} implies that $R^{U}$ is strongly $F$-regular. Using the filtration in Section \ref{sec:gropop}, this implies that $R$ is $F$-rational by (\ref{eq:f-sing}) and \cite[Theorem 4.2]{hh} (see also \cite[Corollary 3.9]{hashiunip}). Since $R$ is factorial and Cohen--Macaulay, it is also Gorenstein \cite{murthy}. This shows that $R$ is strongly $F$-regular (see \cite[Corollary 4.7]{hh} or \cite{hh2}). 

Now let $\cha \kk = 0$. We can choose a suitable large set of primes $S$ such that for $D=\bb{Z}[S^{-1}]$ we have:  the map $G\times W \to W$ (resp.\ $G\times X \to G\cdot X$) is defined over $D$;
 $G\cdot X = (G_D\cdot X_{D}) \times_{\op{Spec} D} \op{Spec}(\kk)$; the affine scheme $(G\cdot X)_{D}=G_D\cdot X_{D}$ is flat over $D$; both $W_{\ol{\bb{F}}_p}$ and  $(G\cdot X)_{\ol{\bb{F}}_p}$ are good for $p\notin S$ (see Corollary \ref{cor:largep}); $W_{\ol{\bb{F}}_p}$ (resp.\ $X_{\ol{\bb{F}}_p}$) is a semi-simple $G_{\ol{\bb{F}}_p}$-module (resp.\ $L_{\ol{\bb{F}}_p}$-module) (see \cite[Section II.5.6]{jantzen}). For such $p\notin S$, for $V=X_{\ol{\bb{F}}_p}$ the map $\left.\psi \right|_{V}$ in (\ref{eq:hsplit}) is injective (see Lemma \ref{lem:iso}). By the previous paragraph and \cite[Theorem 5.5]{hh}, we obtain that $(G \cdot X)_{\bb{F}_p}$ is strongly $F$-regular. Hence, $G\cdot X$ is of strongly $F$-regular type.
\end{proof}

\begin{remark}\label{rem:field}
As seen in the proof above, the assumption on the field to be algebraically closed is not essential. The claims about rational singularities and strongly $F$-regular type (resp.\ $F$-rational singularities) hold over any field, e.g.\ by \cite{boutot} (resp.\ proof of \cite[Lemma 1.4]{smith}), as do claims (1) and (2). The claim on strong $F$-regularity holds for any $F$-finite (e.g.\ perfect) field \cite[Theorem 5.5]{hh}.
\end{remark}

\begin{remark}\label{rem:normalize}
Even if $X$ is good, it may happen that $G\cdot X$ is not, as can be seen in Example \ref{ex:counter}. Nevertheless, we still have  $\mathbf{R}^i q_*\mc{O}_{\ol{BwP}\times_P X}=0$ for $i>0$. Further if $X$ is good, normal, and $q: \ol{BwP} \times_P X \to \ol{BwX}$ is birational (or, more generally, the generic fiber of $q$ is connected and $q$ is separable, as in \cite[Theorem 2.1 (a)]{lorwey}), 
then the results in Theorem \ref{thm:main} carry over if we replace the variety $\ol{BwX}$ in each statement (besides part (2)) with its normalization, which is then in turn a $ww_I$-excellent variety.
\end{remark}

We further note that if one knows a good filtration of $\kk[X]$ explicitly, then by Theorem \ref{thm:main} one obtains readily a corresponding $ww_I$-excellent filtration for $\kk[\ol{BwX}]$. It is then possible to compute the ($T$-equivariant) Hilbert function for $\kk[\ol{BwX}]$ using Lemma \ref{lem:surj} and the Demazure character formula (e.g.\ \cite[Corollary 3.3.11]{brionkumar}). 

By Proposition \ref{prop:invariantalg} and \cite[Theorem 5.5]{hh} if $\ol{BwX}$ is strongly $F$-regular (when $\cha \kk > 0$), then $X$ must also be strongly $F$-regular. In the case of a Borel subgroup, we can strengthen Theorem \ref{thm:main} by giving the following converse to this statement.

\begin{corollary}\label{cor:borel}
Assume that $P=B$ is a Borel subgroup and $W$ has a Weyl filtration. Then $G\cdot X$ is good. Moreover, for $w\in \mc{W}$, the variety $\ol{BwX}$ is strongly $F$-regular (resp.\ of strongly $F$-regular type) when $\cha \kk > 0$ (resp.\ when $\cha \kk =0$) if and only if so is $X$.
\end{corollary}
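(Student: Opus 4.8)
The plan is to derive Corollary \ref{cor:borel} from Theorem \ref{thm:main} by verifying its hypotheses in the Borel case, and then to supply the missing converse direction for strong $F$-regularity using the deformation/transfer machinery already developed. The key observation is that when $P = B$, the Levi $L$ equals the maximal torus $T$, which is linearly reductive, so all $L$-module questions become trivial; in particular any $L$-submodule $V \subset W^U$ automatically has the splitting $\left.\psi\right|_V$ split injective, and an $L$-variety $X \subset V$ is automatically good (its coordinate ring is a $T$-module, hence a sum of one-dimensional modules, i.e.\ has a good filtration). Thus one only needs $W$ to be good to be in the setting of Theorem \ref{thm:good} (b); but $W$ good requires more than a Weyl filtration in general. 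Here, though, we do not need $Z = W$ to be good: the relevant point is that $G \cdot V$ itself is good. This I would get from Remark \ref{rem:xi} (b): since $W$ has a Weyl filtration and $\left.\psi\right|_V$ is split (automatic), the algebra $q_*\mc{O}_{G \times_B V} \cong H^0(G/B, \Sym \eta)$ with $\eta = \V(V^*)$; and because $P = B$, the bundle $\eta$ is a successive extension — in fact a direct sum, since $T$ acts semisimply — of line bundles $\mc{L}(\la)$ with $\la \in X(T)_+$, so $\Sym \eta$ is filtered by such line bundles, each of which has vanishing higher cohomology on $G/B$ and whose sections form $\nabla_G(\la)$. Surjectivity of $W^* \to H^0(G/B, \eta)$ and generation of the section ring then follow because $W$ has a Weyl filtration (use Lemma \ref{lem:gr} applied to $W^* \to H^0(G/B,\eta)$, as in the proof of Remark \ref{rem:xi} (b), and then the standard fact that $\kk[G]^U$ — hence this quotient — is generated by the relevant sections, or more directly: $\Sym \eta$ being filtered by $\mc{L}(\la)$'s with the multiplication maps $\nabla_G(\la) \otimes \nabla_G(\mu) \onto \nabla_G(\la+\mu)$ surjective by \cite[Theorem 1]{projnormschub}). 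Hence $G \cdot X \subset G\cdot V$ with $G\cdot V$ good, so by Theorem \ref{thm:good} (c) — noting $(V,X)$ is a good pair since $V$ is $T$-good and $I_X$ is a $T$-submodule — $(G\cdot V, G\cdot X)$ is a good pair, and in particular $G\cdot X$ is good.

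With $G \cdot X$ good, Theorem \ref{thm:main} applies verbatim. The forward implication is then immediate: if $X$ is strongly $F$-regular (resp.\ of strongly $F$-regular type), then part (4) of Theorem \ref{thm:main} — well, part (4) as stated requires $X$ to be an $L$-submodule, so for general $X$ I instead argue directly via the deformation argument in the proof of Theorem \ref{thm:main}. Recall that proof showed $\gr \kk[\ol{BwX}] \cong (\kk[\ol{Bww_IB}]^U_+ \otimes \kk[X]^{U_L^-})^T$; when $P = B$ we have $w_I = e$, so $\gr \kk[\ol{BwX}] \cong (\kk[\ol{BwB}]^U_+ \otimes \kk[X])^T$, i.e.\ no $U_L^-$-invariants are taken of $\kk[X]$ at all. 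By Lemma \ref{lem:cone}, $\kk[\ol{BwB}]^U_+ \cong C(X(w))$ is strongly $F$-regular (resp.\ of strongly $F$-regular type), and if $X$ is strongly $F$-regular then by \cite[Theorem 5.5]{hh} (resp.\ the characteristic-zero analogue via spreading out) the tensor product $\kk[\ol{BwB}]^U_+ \otimes \kk[X]$ is strongly $F$-regular; taking $T$-invariants of a strongly $F$-regular ring under a linearly reductive group preserves strong $F$-regularity (direct summand, \cite[Theorem 5.5]{hh} / Boutot-type argument), so $\gr \kk[\ol{BwX}]$ is strongly $F$-regular (resp.\ of strongly $F$-regular type). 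Since $\gr A$ is a flat deformation of $A = \kk[\ol{BwX}]$, strong $F$-regularity (resp.\ strongly $F$-regular type) ascends from $\gr A$ to $A$ by the standard openness/semicontinuity of strong $F$-regularity under flat deformation (e.g.\ \cite{hh}), giving the forward direction.

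The converse direction — if $\ol{BwX}$ is strongly $F$-regular then so is $X$ — follows from Proposition \ref{prop:invariantalg}: the algebra $\kk[X] \cong \kk[wX]$ is a direct summand of $\kk[\ol{BwX}]$ as a $\kk[X]$-module. Strong $F$-regularity passes to direct summands (\cite[Theorem 5.5, 3.1(c)]{hh}), so $X$ is strongly $F$-regular when $\cha \kk > 0$; for $\cha \kk = 0$, spreading out and the positive-characteristic direct-summand argument give that $X$ is of strongly $F$-regular type. This is exactly the remark made just before the corollary, now upgraded to the general (non-submodule) $X$ because the direct summand statement in Proposition \ref{prop:invariantalg} holds for arbitrary closed $L$-stable $X$.

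The main obstacle, and the step requiring the most care, is establishing that $G \cdot V$ (equivalently $G\cdot X$) is good using only that $W$ has a Weyl filtration rather than the stronger "$W$ is good". The argument via Remark \ref{rem:xi} (b) is the right route, but one must be careful that the Borel case genuinely simplifies $\eta = \V(V^*)$: because $L = T$ is a torus and $V^*$ is a $T$-module, $V^*$ decomposes as a sum of characters $\kk_{\la}$, and $V \subset W^U$ forces the corresponding weights to be antidominant on $V^*$... — actually the weights of $V \subset W^U$ need not a priori be dominant, so one must restrict to the dominant part or invoke that $\eta$ is filtered by $\mc{L}(\la)$ only for those $\la$ for which $\mc{L}(\la)$ has sections; the non-dominant summands contribute zero to $H^0(G/B, \Sym\eta)$ but could contribute to higher cohomology, so checking $H^i(G/B, \bigwedge^i \xi) = 0$ via Remark \ref{rem:xi} (a) is the cleaner formulation and is where the Weyl-filtration hypothesis on $W$ must be leveraged (Kempf vanishing plus the fact that $\xi$ is an extension of line bundles $\mc{L}(\mu)$ with $\mu$ ranging over weights of $W^*/V^*$). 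I expect this cohomological vanishing — that a successive extension of arbitrary $\mc{L}(\mu)$'s built from a Weyl-filtered $W$ has the requisite acyclicity in the $P=B$ case — to be the crux, and it should follow from \cite[Theorem 2]{projnormschub} / the global $F$-regularity of $G/B$ together with the structure of $\xi$, but it deserves to be spelled out carefully rather than asserted.
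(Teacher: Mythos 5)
Your plan for showing $G\cdot X$ is good (via Remark~\ref{rem:xi}~(b), the surjectivity of $W^*\to H^0(G/B,\eta)$ from Lemma~\ref{lem:gr}, and generation of the multicone section ring via \cite{projnormschub}) matches the paper's route, and your converse direction (direct summand via Proposition~\ref{prop:invariantalg} plus \cite{hh}) is also what the paper does. Your worry about the $\la_i$ possibly not being dominant resolves cleanly and is not an obstruction: since $W$ has a Weyl filtration, $W^*$ has a good filtration, so $(W^*)^{U^-}$ has weights in $-X(T)_+$; the injectivity of $\left.\psi\right|_V$ then forces $V$ to have only dominant weights, which is exactly what the paper asserts. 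So the goodness step is fine once you note this.

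The genuine gap is in your forward direction. You degenerate to $\gr A \cong (\kk[\ol{BwB}]^U_+ \otimes \kk[X])^T$ and then claim that strong $F$-regularity (resp.\ strongly $F$-regular type) ``ascends from $\gr A$ to $A$ by the standard openness/semicontinuity of strong $F$-regularity under flat deformation.'' This is not a valid step: strong $F$-regularity does \emph{not} deform from the special fiber of a flat family in general (Singh gave counterexamples), and the paper itself only deduces $F$-rationality from the deformation argument in the proof of Theorem~\ref{thm:main}~(4), using \cite[Theorem~4.2]{hh}, which is precisely the property that does deform. Your route would therefore only give $F$-rationality of $\ol{BwX}$, which is already in Theorem~\ref{thm:main} and is strictly weaker than what the corollary claims.

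The point that rescues the Borel case — and is the actual content of the paper's proof — is that when $P=B$ no degeneration is needed at all. Since $L=T$ and $V\subset W^U$ has only dominant $T$-weights, $\kk[X]$ carries an $X(T)_+$-grading (weights lie in $-X(T)_+$), and one has the identification
\[
\kk[\ol{BwX}] \;\cong\; q_*\mc{O}_{\ol{BwB}\times_B X} \;=\; (\kk[\ol{BwB}]^U \otimes \kk[X])^T \;=\; (\kk[\ol{BwB}]^U_+ \otimes \kk[X])^T
\]
\emph{on the nose}, not just at the associated graded level. Here the first isomorphism is Theorem~\ref{thm:main}~(1), using that $G\cdot X$ is good. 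One can then apply Lemma~\ref{lem:cone}, \cite[Theorem~5.2]{hashisurjgraded} (tensor products of $\bb{Z}_{\ge0}$-graded strongly $F$-regular rings), and \cite[Theorem~5.5]{hh} (passage to $T$-invariants as a direct summand) directly to the actual coordinate ring $\kk[\ol{BwX}]$ rather than to $\gr\kk[\ol{BwX}]$, which yields genuine strong $F$-regularity. The characteristic-zero case then goes by carefully spreading out this exact identification over a finitely generated $\bb{Z}$-algebra and reducing to finite fields, again bypassing any deformation step.
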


\begin{proof}
We can assume that $P=B$. Since $T$ is linearly reductive, $(V,X)$ is a good pair. By Theorem \ref{thm:good} (c), in order to show that $G\cdot X$ is good it is enough to show that $G\cdot V$ is so. For this, we use Theorem \ref{thm:good}(a). Since $V\subset W^U$, we have a $T$-decomposition $V=\bigoplus_{i=1}^n \kk_{\la_i}$, where $\la_i \in X(T)_+$. The section ring 
\[q_* \mc{O}_{G\times_B V} = \bigoplus_{(m_i)\in \bb{N}^n} H^0(\mc{L}(\sum_{i=1}^n m_i\la_i))\]
is generated in the components of the unit tuples, i.e.\ by the sum $\bigoplus_{i=1}^{n} \nabla_{G}(\la_i)$, as it follows from \cite{projnormschub} (see also \cite{multicone}). By Remark \ref{rem:xi} (b), $G\cdot V$ is good.

Assume that $X$ is strongly $F$-regular. Note that both $\kk[\ol{BwB}]^U_+$ and $\kk[X]$ are $X(T)_+$-graded algebras, so also $\bb{Z}_{\geq 0}$-graded, using for instance the map $h$ in Section \ref{sec:gropop}. Then the algebra $q_* \mc{O}_{\ol{BwB}\times_B X} = (\kk[\ol{BwB}]^U \oo \kk[X])^T=(\kk[\ol{BwB}]^U_+ \oo \kk[X])^T$ is strongly $F$-regular, as it follows by combining Lemma \ref{lem:cone},  \cite[Theorem 5.2]{hashisurjgraded} and \cite[Theorem 5.5]{hh}. Since $G\cdot X$ is good, the conclusion follows from Theorem \ref{thm:main} (1).

Now let $\cha \kk = 0$. Assume $X$ is of strongly $F$-regular type, and consider a finitely generated $\bb{Z}$-algebra $R \subset k$ as in the definition in Section \ref{sec:fsing} (enlarging, if necessary, so that the action of $T_R$ is well-defined). Let $(\ol{BwX})_R = \op{Spec}((C(X(w)_R) \oo R[X_R])^{T_R})$. As in the proof of Lemma \ref{lem:cone}, $(\ol{BwX})_R$ is flat of finite type over $R$, and $(C(X(w)_R) \oo R[X_R])^{T_R} \oo_R \kk' \cong (C(X(w)_{\kk'}) \oo \kk'[X_{\kk'}])^{T_{\kk'}}$, for any field $\kk'$ over $R$ (see \cite[Section I.2.11]{jantzen}). By Theorem \ref{thm:main} (1), we have $(\ol{BwX})_R \times_{\op{Spec}(R)} \op{Spec}(\kk) \cong \ol{BwX}$. When $\kk'$ is a residue field of $R$, it is finite, in which case $C(X(w)_{\kk'})$ is strongly $F$-regular, as seen in the proof of Lemma \ref{lem:cone}. As in the previous paragraph, we conclude that $(\ol{BwX})_{R/\mathfrak{m}}$ is strongly $F$-regular for maximal ideals $\mathfrak{m}$ in a dense open subset of $\op{Spec}(R)$.

Finally, if $\ol{BwX}$ is of strongly $F$-regular type, using Proposition \ref{prop:invariantalg} we see by an argument similar to the above that $X$ is also of strongly $F$-regular type.
\end{proof}

Further, we provide a result that can lead to more general varieties outside the equivariant setting. Following \cite{brionmultfree}, we call a closed subvariety $Y\subset G/P$ multiplicity-free if it is rationally equivalent to a multiplicity-free linear combination of Schubert cycles.

\begin{corollary}\label{cor:multfree}
Let $Y$ be a multiplicity-free subvariety of $G/P$, and assume that $G\cdot X$ is good. Then $\mc{O}_{q(\pi^{-1}(Y))} \, \xrightarrow{\,\cong\,} \, \mathbf{R} q_*\mc{O}_{\pi^{-1}(Y)}$ is an isomorphism. Moreover, if $X$ is normal (resp.\ has rational singularities when $\cha \kk  = 0$), then $q(\pi^{-1}(Y))$ is normal (resp.\ has rational singularities).
\end{corollary}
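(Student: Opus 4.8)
The plan is to deduce both assertions from the Schubert case already settled in Theorem \ref{thm:main}, together with the known properties of multiplicity-free subvarieties from \cite{brionmultfree}. Write $Z:=\pi^{-1}(Y)\subseteq G\times_P X$, and let $\pi'\colon G/B\to G/P$ be the projection. The first step is to reduce the cohomology I need to $G/B$. Since $\pi'$ is smooth and $(\pi')^{-1}(X(w)_P)=X(ww_I)$ as (reduced) Schubert varieties for $w\in\mc{W}^I$ (cf.\ (\ref{eq:weylrep}) and \cite[Section II.13.8]{jantzen}), applying the flat pullback $(\pi')^*$ to the expansion $[Y]=\sum_{w\in E}[X(w)_P]$ (with the $w\in E\subseteq\mc{W}^I$ pairwise distinct) gives $[(\pi')^{-1}(Y)]=\sum_{w\in E}[X(ww_I)]$, a multiplicity-free sum of distinct Schubert classes; hence $\widetilde Y:=(\pi')^{-1}(Y)$ is multiplicity-free in $G/B$. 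For $\la\in X(T)_+$ the line bundle $\mc{L}(\la)$ is globally generated on $G/B$, so by \cite{brionmultfree} one has $H^i(\widetilde Y,\mc{L}(\la)|_{\widetilde Y})=0$ for $i>0$ and the restriction $\nabla_G(\la)=H^0(G/B,\mc{L}(\la))\onto H^0(\widetilde Y,\mc{L}(\la)|_{\widetilde Y})$ is onto. Using the quasi-isomorphism (\ref{eq:pushfiber}) and flat base change along the Cartesian square attached to $(\pi')^{-1}(Y)=\widetilde Y$, exactly as in the proof of Lemma \ref{lem:surj}, these transport to $H^i(Y,\V(\la)|_Y)=0$ for $i>0$ and a surjection $H^0(G/P,\V(\la))\onto H^0(Y,\V(\la)|_Y)$.

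Next I would run the argument of Theorem \ref{thm:main}(1) relative to $Y$. Since $G\cdot X$ is good, $X$ is good, and (as in the proof of Theorem \ref{thm:main}(1), via (\ref{eq:unip})) a good filtration of $\kk[X]$ induces an exhaustive filtration of the sheaf of $\mc{O}_Y$-algebras $(\pi|_Z)_*\mc{O}_Z=\V(\kk[X])|_Y$ (note $\pi|_Z$ is affine, as $X$ is affine) by coherent subsheaves with successive quotients $\V(\la_j)|_Y$, $\la_j\in X(T)_+$. Induction on this filtration together with the vanishing from the first step yields $H^i(Z,\mc{O}_Z)=H^i(Y,\V(\kk[X])|_Y)=0$ for $i>0$, hence $\mathbf{R}^iq_*\mc{O}_Z=0$ for $i>0$ since $W$ is affine. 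In degree $0$, the surjections on the layers together with the vanishing of $H^1$ on the sub-layers show that the restriction $\kk[G\cdot X]=H^0(G/P,\V(\kk[X]))\to H^0(Y,\V(\kk[X])|_Y)=H^0(Z,\mc{O}_Z)$ is onto; composing with the surjection $\kk[W]\onto\kk[G\cdot X]$ of Theorem \ref{thm:good}(a), the map $\kk[W]\to H^0(Z,\mc{O}_Z)$ is onto, so the coordinate ring of the scheme-theoretic image $q(\pi^{-1}(Y))$ equals $H^0(Z,\mc{O}_Z)$. Combined with the higher vanishing, this is the asserted isomorphism $\mc{O}_{q(\pi^{-1}(Y))}\xrightarrow{\cong}\mathbf{R}q_*\mc{O}_{\pi^{-1}(Y)}$.

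For the singularity statements I use that, by \cite{brionmultfree}, $Y$ is itself normal and, when $\cha\kk=0$, has rational singularities. The morphism $\pi|_Z\colon Z\to Y$ is a Zariski-locally trivial bundle with fiber $X$; hence if $X$ is normal then $Z$ is normal, and if $\cha\kk=0$ and $X$ has rational singularities then $Z$ has rational singularities (locally $Z\cong U\times X$ with $U\subseteq Y$ open, and both properties are inherited by such products over an algebraically closed field, cf.\ \cite{elkik} for rational singularities; one argues componentwise if $X$ is reducible). Finally, $q\colon Z\to q(Z)$ is proper and $\mc{O}_{q(Z)}\xrightarrow{\cong}\mathbf{R}q_*\mc{O}_Z$ by the previous paragraph, so $q(Z)$ inherits these properties: for normality, $\kk[q(Z)]=H^0(Z,\mc{O}_Z)$ is the ring of global regular functions on the normal variety $Z$, hence integrally closed; for rational singularities, one invokes the descent of rational singularities along a proper morphism $h$ with $\mathbf{R}h_*\mc{O}=\mc{O}$ (for instance by composing $Z$ with a resolution and using \cite{elkik}, or the criterion of \cite{kovacs2}).

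The second and third paragraphs are essentially formal once Theorem \ref{thm:main} is in hand. The point requiring real care, and the step I expect to be the main obstacle, is the first paragraph: one must identify exactly the input needed from \cite{brionmultfree} — cohomology vanishing and surjectivity for globally generated line bundles, plus normality and rational singularities of multiplicity-free subvarieties — and then make it available in the vector-bundle form needed on $Y\subseteq G/P$, which is why the preimage $\widetilde Y\subseteq G/B$ is introduced and shown to be again multiplicity-free.
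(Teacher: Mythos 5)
Your proposal is correct and follows essentially the same route as the paper, whose proof is a terse reference to Theorem \ref{thm:main}(1), \cite[Theorem 0.1]{brionmultfree}, and Lemma \ref{lem:surj}. The genuine added value in your write-up is the first paragraph: Brion's theorem is about line bundles, so to get vanishing and surjectivity for the higher-rank bundles $\V(\la)|_Y$ one must pull back along $\pi'\colon G/B \to G/P$, check that $(\pi')^{-1}(Y)$ is again multiplicity-free (which you do via flatness of $\pi'$ and the injection $w\mapsto ww_I$ on $\mc{W}^I$), and then transport via the same base-change argument used in Lemma \ref{lem:surj}; the paper's citation of that lemma implicitly stands for exactly this reduction. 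One small remark: in the paper the rational-singularities descent is attributed to Kov\'acs's criterion \cite[Theorem 1]{kovacs} rather than \cite{kovacs2}, which is the reference used elsewhere in the paper for positive-characteristic rational singularities.
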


\begin{proof}
The proof of the isomorphism $\mc{O}_{q(\pi^{-1}(Y))} \, \xrightarrow{\,\cong\,} \, \mathbf{R} q_*\mc{O}_{\pi^{-1}(Y)}$ follows as in Theorem \ref{thm:main} (a) using \cite[Theorem 0.1]{brionmultfree} and Lemma \ref{lem:surj}. The claim on normality follows from this, as $Y$ itself is normal \cite[Theorem 0.1]{brionmultfree}. Moreover, $Y$ has rational singularities when $\cha \kk = 0$  \cite[Theorem 0.1 and Remark 3.3]{brionmultfree}, hence we conclude that so does $q(\pi^{-1}(Y))$ by \cite[Theorem 1]{kovacs}.
\end{proof}

\subsection{Defining equations of saturations}\label{subsec:def} In this section we give a result on the defining equations of $G\cdot X$ in $W$. Assume that $G\cdot V$ is good. Let $M \subset \kk[V]$ be an $L$-stable module with a good filtration. We can associate to it a $G$-module $M' \subset \kk[G\cdot V]$ in the following way. Consider the inclusion of sheaves $\mc{V}(M) \subset \mc{V}(\Sym V^*)$ on $G/P$. Then we put $M' = H^0(G/P, \mc{V}(M))$. As in the proof of Theorem \ref{thm:main} (1), we see that $M'$ has a good filtration as a $G$-module. Note that $M'$ contains $\op{span}_{\kk} G\cdot M$ via the inclusion given by Remark \ref{rem:unipara}, and this containment is an equality when $M'$ is a semi-simple $G$-module.

\begin{theorem}\label{thm:defi}
Let $(V,X)$ be a good pair with $G\cdot V$ good, and denote by $I_X \subset \kk[V]$ the defining ideal of $X \subset V$. Let $M$ be the span of a set of good generators of $I_X$ and take a basis $\mc{P}'$ of the $G$-module $M'\subset \kk[G\cdot V]$ associated to $M$ as above. Consider the following:
\begin{enumerate}
\item A set of generators $\mc{P}_{G\cdot V}$ of the defining ideal $I_{G\cdot V} \subset \kk[W]$ of $G\cdot V$;
\item A lift $\tilde{\mc{P}'} \subset \kk[W]$ of the set $\mc{P}' \subset \, \kk[W]/I_{G\cdot V}$.
\end{enumerate}
Then the defining ideal of $G \cdot X$ in $\kk[W]$ is generated by $\mc{P}:= \, \mc{P}_{G\cdot V} \, \cup \, \tilde{\mc{P}'}$. 

Furthermore, assume that $(W, \, G\cdot V)$ is a good pair. If either $M'$ is a tilting module, or there are no dominant weights $\la >\mu$ such that $(M')^{U}_\la \neq 0 \neq (I_{G\cdot V})^{U}_\mu$ , then the lift $\tilde{\mc{P}'}$ can be chosen such that $\op{span}_{\kk} \tilde{\mc{P}'} \, \subset \kk[W]$ is $G$-stable; with such lift, if $\mc{P}_{G\cdot V}$ are good generators of $I_{G\cdot V}$ then $\mc{P}$ is a set of good defining equations of $G\cdot X \, \subset W$.
\end{theorem}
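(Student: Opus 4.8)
The overall strategy is to work on the level of $U^-$-invariants, where ``good pair'' translates into surjectivity of the relevant maps by Corollary \ref{cor:goodpair}, and to exploit the isomorphism $\kk[G\cdot X]^{U^-_I} \cong \kk[X]^{U^-_L}$ from Remark \ref{rem:unipara} (and its analogue for $G\cdot V$) to transport the good generators of $I_X$ upstairs. First I would set $J \subset \kk[W]$ to be the ideal generated by $\mc{P} = \mc{P}_{G\cdot V} \cup \tilde{\mc{P}'}$; since $\mc{P}_{G\cdot V}$ generates $I_{G\cdot V}$, it suffices to show that $J/I_{G\cdot V} \subset \kk[G\cdot V]$ equals the defining ideal $I'$ of $G\cdot X$ inside $\kk[G\cdot V]$. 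By construction $J/I_{G\cdot V}$ is the ideal of $\kk[G\cdot V]$ generated by $\mc{P}'$, and $\mc{P}'$ is a basis of $M' = H^0(G/P, \mc{V}(M))$, which does lie in $I'$ because $M \subset I_X$. So the content is the reverse inclusion: the $G$-module $M'$ generates $I'$ as an ideal.

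To prove that, I would pass to $U^-$-invariants. Under the isomorphism $\kk[G\cdot V]^{U^-_I} \xrightarrow{\cong} \kk[V]^{U^-_L}$ of Remark \ref{rem:unipara}, the ideal $I'^{U^-_I}$ maps onto $(I_X)^{U^-_L}$ (restriction of functions on $G\cdot V$ to the fiber $V$, which is split by Proposition \ref{prop:invariantalg}), and I would check that $M'^{U^-_I}$ maps onto $M^{U^-_L}$ — this uses the description $M' = H^0(G/P, \mc{V}(M))$ together with the fact, as in the proof of Theorem \ref{thm:main}(1), that taking $\mathbf{R}\pi_*$ of $\mc{L}$-bundles computes $\mc{V}$-bundle cohomology with no higher terms, so that a good filtration of $M$ with quotients $\Delta_L(\la)$ yields $M'^{U^-_I} \cong M^{U^-_L}$. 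Since $\mc{P}$ is a set of good generators of $I_X$, the multiplication map $(\kk[V]\oo M)^{U^-_L} \to (I_X)^{U^-_L}$ is onto; combining this with the surjections just described and the $U^-$-invariant multiplication compatibilities, the ideal of $\kk[G\cdot V]$ generated by $M'$ has the same $U^-_I$-invariants as $I'$. Because $G\cdot V$ is good, a containment of ideals that is an equality on $U$-invariants (equivalently $U^-$-invariants, after applying $w_0$) is an equality — this is exactly Lemma \ref{lem:gr} applied to the inclusion of the $M'$-generated ideal into $I'$, whose cokernel has vanishing $U^-$-invariants. This settles the first assertion.

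For the ``Furthermore'' part, I would argue that the lift $\tilde{\mc{P}'}$ can be chosen $G$-equivariantly. We have the surjection of $G$-modules $\kk[W] \onto \kk[G\cdot V]$ with kernel $I_{G\cdot V}$, and we want a $G$-submodule $\tilde M \subset \kk[W]$ mapping isomorphically onto $M'$. This is the question of splitting the surjection $\pi^{-1}(M') \onto M'$ of $G$-modules, where $\pi^{-1}(M')$ is the preimage, i.e. an extension $0 \to I_{G\cdot V} \to \pi^{-1}(M') \to M' \to 0$. If $M'$ is tilting, then $M'$ is projective relative to modules with good filtrations in the sense that $\Ext^1_G(M', I_{G\cdot V}) = 0$ (since $I_{G\cdot V}$ has a good filtration, as $(W, G\cdot V)$ is a good pair, and $\Ext^1$ between a Weyl-filtered module and a good-filtered module vanishes by \cite[Proposition II.4.13]{jantzen}), so the extension splits. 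In the other case, the hypothesis that there are no dominant $\la > \mu$ with $(M')^{U}_\la \neq 0 \neq (I_{G\cdot V})^{U}_\mu$ forces $\Ext^1_G(\Delta_G(\la), \nabla_G(\mu)) $-type obstructions to vanish weight by weight — more precisely, filtering $M'$ by Weyl modules $\Delta_G(\la)$ (it has a Weyl filtration since it has a good filtration and... here one uses that $M'$ occurs as a submodule of a good module, or argues directly that relevant $\Ext$'s vanish) and $I_{G\cdot V}$ by dual Weyl modules $\nabla_G(\mu)$, every extension class lives in $\Ext^1_G(\Delta_G(\la), \nabla_G(\mu))$ which is nonzero only if $\mu < \la$ strictly (and $\mu,\la$ dominant), contradicting the weight hypothesis; hence the extension splits and a $G$-stable lift $\tilde{\mc{P}'}$ exists. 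Finally, with such a lift $\op{span}_\kk \tilde{\mc{P}'} \cong M'$ has a good filtration, and if $\mc{P}_{G\cdot V}$ are good generators of $I_{G\cdot V}$ then $M_{\mc{P}} = I_{G\cdot V}\text{-part} \oplus \op{span}_\kk\tilde{\mc{P}'}$ — more carefully, $\op{span}_\kk \mc{P}_{G\cdot V} \oplus \op{span}_\kk \tilde{\mc{P}'}$ — has a good filtration (a direct sum of two good-filtered modules), and condition (2) of Definition \ref{def:goodgen}, surjectivity of $(\kk[W]\oo M_{\mc{P}})^{U} \to I_{G\cdot X}^{U}$, follows by combining the surjectivity of $(\kk[W]\oo\op{span}\mc{P}_{G\cdot V})^U \to I_{G\cdot V}^U$ with the first part of the theorem reduced to $U$-invariants via Corollary \ref{cor:goodpair} and Lemma \ref{lem:gr}.

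\textbf{Main obstacle.} I expect the delicate point to be the $G$-equivariant lifting, i.e. showing $\Ext^1_G(M', I_{G\cdot V}) = 0$ in the non-tilting case purely from the weight separation hypothesis; one must correctly reduce to $\Ext^1_G(\Delta_G(\la), \nabla_G(\mu))$ between the Weyl-filtration layers of $M'$ and the good-filtration layers of $I_{G\cdot V}$, and invoke that such $\Ext^1$ vanishes unless $\mu < \la$ (with both dominant) — which is where the hypothesis is consumed — while also justifying that $M'$ admits a Weyl filtration (not merely a good one) in this situation. The rest is a matter of carefully chasing $U^-$-invariants through the isomorphisms already established in Proposition \ref{prop:invariantalg}, Remark \ref{rem:unipara}, and Theorem \ref{thm:main}(1), and repeatedly applying Lemma \ref{lem:gr} and Corollary \ref{cor:goodpair}.
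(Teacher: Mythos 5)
Your proposal follows the paper's argument closely in both parts. For the first assertion, your reduction to $U^-$-invariants via Remark \ref{rem:unipara}, the use of the good-generator hypothesis to get surjectivity on invariants, and the closing step via Lemma \ref{lem:gr} and Proposition \ref{prop:mathieu} is exactly what the paper does; the paper simply records your chain of maps explicitly as its composition (\ref{eq:ontoinv}), viewing the problem as surjectivity of the multiplication map $\kk[G\cdot V]\oo M' \to J$ where $J$ is the defining ideal of $G\cdot X$ in $\kk[G\cdot V]$. (Note only the inclusion $M\subset M'^{U^-_I}$ is needed, not surjectivity of a map $M'^{U^-_I}\to M^{U^-_L}$, so that sub-step of your plan can be weakened.) The setup for the ``furthermore'' part, reducing the $G$-equivariant lift to showing $\Ext^1_G(M', I_{G\cdot V})=0$ and handling the tilting case via \cite[Proposition II.4.13]{jantzen}, is also the same as in the paper, as is the final bookkeeping with the two-row commutative diagram of $U$-invariants.

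The one genuine gap is exactly the obstacle you flagged. In the non-tilting, weight-separated case you propose to filter $M'$ by Weyl modules $\Delta_G(\la)$ so that every extension class lives in some $\Ext^1_G(\Delta_G(\la),\nabla_G(\mu))$. But $M'$ is only known to have a good filtration (with $\nabla_G$-layers), not a Weyl filtration, and your parenthetical hedge (``it has a Weyl filtration since it has a good filtration and\dots'') is indeed unjustified: a good filtration does not give a Weyl filtration, and the hypotheses of this theorem do not make $M'$ tilting in this branch. The paper fills this gap by invoking \cite[Proposition 2]{friedlander}, which yields the $\Ext^1$-vanishing directly from the weight-separation condition between the $\nabla$-layers of $M'$ and those of $I_{G\cdot V}$, with no appeal to a Weyl filtration of $M'$. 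If you want to argue without the citation, the correct move is to filter both $M'$ and $I_{G\cdot V}$ by dual Weyl modules (which is available) and invoke the appropriate $\Ext^1_G(\nabla_G(\la),\nabla_G(\mu))$-vanishing criterion under the assumed order condition, rather than attempting to produce a Weyl filtration of $M'$.
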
 

\begin{proof}
Let $J \subset \kk[G\cdot V]$ denote the defining ideal of $G\cdot X$ in $G\cdot V$. We have an exact sequence
\[0 \to J \to \kk[G\cdot V] \to\kk[G\cdot X] \to 0.\] 
By Remark \ref{rem:unipara}, taking $U^-_I$-invariants in the sequence above we get that $J^{U^-_I} \cong I_X$. Furthermore, by construction $M'\subset J$ and $M\subset M'^{U^-_I}$. Consider the multiplication map 
\[m_{\mc{P}'}: \kk[G\cdot V] \oo M' \to J.\]
By Lemma \ref{lem:gr} and Proposition \ref{prop:mathieu}, to see that $m_{\mc{P}'}$ is surjective, it is enough to show that the induced map on $U^-$-invariants is so. This is a consequence of the fact that the following composition of maps is surjective by the assumption on good generators of $I_X$:
\begin{equation}\label{eq:ontoinv}
(\kk[V] \oo M)^{U_L^-} \hookrightarrow (\kk[G\cdot V]^{U^-_I} \oo M'^{U^-_I})^{U_L^-} \hookrightarrow (\kk[G\cdot V] \oo M')^{U^-} \to J^{U^-} = I_X^{U^-_L}.
\end{equation}
As $\mc{P}'$ generates $J = I_{G\cdot X}/ I_{G\cdot V}$, it is clear that $\mc{P}$ generates $I_{G\cdot X}$.

Let $N$ be the $G$-submodule $N \subset I_{G\cdot X}$ corresponding to $M' \subset J$. We have an exact sequence
\[0 \to I_{G\cdot V} \to N \to M'\to 0.\]
To show that $\tilde{\mc{P}'}$ can be chosen in the required way, we show that the sequence splits as $\Ext^1_G(M', I_{G\cdot V}) = 0$. When $M'$ is tilting, this is a consequence of \cite[Proposition II.4.13]{jantzen}, as $I_{G\cdot V}$ has a good filtration and $M'$ has a Weyl filtration. The other case is a consequence of \cite[Proposition 2]{friedlander}.

By the splitting above, we have $M_{\tilde{\mc{P}'}} = \op{span} \tilde{\mc{P}'} \cong M' $ as $G$-modules. It has a good filtration, as the module $M_{\mc{P}_{G\cdot V}}$, since $\mc{P}_{G\cdot V}$ is a set of good generators. Therefore, $M_{\mc{P}} = M_{\tilde{\mc{P}'}} \oplus M_{\mc{P}_{G\cdot V}}$ has a good filtration \cite[Corollary 3.2.5]{donkin}. Consider the commutative diagram
\[\xymatrix@R-0.4pc@C-0.3pc{ 
0 \ar[r] & (k[W] \oo M_{\mc{P}_{G\cdot V}})^{U} \ar[r]\ar[d] & (k[W] \oo M_{\mc{P}})
^{U} \ar[r]\ar[d] & (k[W] \oo M_{\tilde{\mc{P}'}})^{U} \ar[r]\ar[d] & 0 \\
0 \ar[r] & \quad (I_{G\cdot V})^{U} \quad \ar[r] & \quad (I_{G\cdot X})^{U} \quad \ar[r] & \quad J^{U} \quad \ar[r] & 0
}\]
Due to the respective modules having good filtrations by Proposition \ref{prop:mathieu}, the rows of the diagrams are exact  \cite[Proposition 1.4 and Proposition 2]{donkinunip}. Since $\mc{P}_{G\cdot V}$ is a set of good generators, the first vertical map is onto. We are left to show that the third vertical map is onto, or equivalently, that the following composition is surjective (see comment after Lemma \ref{lem:exist}):
\[ (\kk[W] \oo M_{\tilde{\mc{P}'}})^{U^-} \to (\kk[G\cdot V] \oo M')^{U^-} \to J^{U^-}.\]
The first map is onto since $M_{\tilde{\mc{P}'}} \xrightarrow{\cong} M'$ and $(W, \, G\cdot V)$ is a good pair. The second map is onto as seen in (\ref{eq:ontoinv}). Thus, $\mc{P}$ is a good generating set of $I_{G\cdot X}$.
\end{proof}

\begin{remark}
With the assumptions above, one can similarly give defining equations of $\ol{BwX}$, provided we have defining equations of $\ol{BwV}$ in $\kk[G\cdot V]$.
\end{remark}

When $G\cdot V$ is good, by Theorem \ref{thm:main} one can in principle apply \cite[Theorem 5.1.3]{weymanbook} to obtain a (minimal) set of generators $\mc{P}_{G\cdot V}$ (as seen in Remark \ref{rem:xi}), or even its minimal free resolution. We note that the minimal free resolution of $G\cdot V$ given by \textit{loc.\ cit.}\ has length equal to $\codim_{G\cdot V} W$, since $G\cdot V$ is Cohen--Macaulay (\ref{eq:f-sing}). For variations of this technique, see for example \cite[Section 6]{weymanbook} or \cite[Proposition 4.4]{kinlor}.

\section{Special cases and applications}\label{sec:applications}

This section is devoted to demonstrate the strength of our results through some important applications, both classical and new. The examples in the next three subsections fit into the situation described in the Introduction (\ref{eq:mainsetup}). 

\subsection{Varieties of determinantal type}\label{subsec:det}

Let $m\geq n\geq 0$, and consider the case when $W$ is the space of $m\times n$ matrices, $n\times n$ skew-symmetric matrices, or $n\times n$ symmetric matrices -- the latter can be also identified with the 2nd divided power of $\kk^n$. Then we choose $G$ to be $\GL(m)\times \GL(n)$, $\GL(n)$ or $\GL(n)$, and $W=\Delta_G(\la)$ to be $\kk^m \oo \kk^n$, $\bigwedge^2 \kk^n$, or $\Delta_G(2\omega_1)$, respectively.
For $0\leq r \leq n$, we put $L$ to be $\GL(r)\times \GL(r)$, $\GL(r)$ or $\GL(r)$, respectively (and $V=\Delta_L(\la)$). Then $G\cdot V$ is precisely the closed subvariety in $W$ of matrices of rank at most $r$ (see \cite[Section 6]{weymanbook}).

The variety $W$ (resp.\ $V$) is good in arbitrary characteristic (see Lemma \ref{lem:good} and \cite{boffi}). Thus, by Theorem \ref{thm:introgood} (with $X=V$) the $G$-variety $G\cdot V$ is good as well. Therefore, by Theorem \ref{thm:main} $G\cdot V$ is strongly $F$-regular when $\cha \kk >0$ (resp.\ is of strongly $F$-regular type when $\cha \kk = 0$) and $\ol{BwV}$ is $F$-rational (resp.\ has rational singularities if $\cha \kk = 0 $). This yields all $G$-orbit closures $G\cdot V$ and many $B$-orbit closures $\ol{BwV}$ in $W$. 

For $G$-orbit closures in the case of symmetric matrices, this answers \cite[Question 5.10]{katzmir}. For $G$-orbit closures in  $\kk^m \oo \kk^n$ and $\bigwedge^2 \kk^n$, we recover the results \cite{hh2}, \cite[Theorem 1.3]{baetica} (see also \cite[Chapter 7]{baeticabook}). 

The $B$-orbit closures are called matrix Schubert varieties in the literature. As far as we are aware, in this case the results are new even in characteristic $0$, except in the space of $m\times n$ matrices, when it is known that all matrix Schubert varieties are strongly $F$-regular, as this can be reduced to the corresponding statement on Schubert varieties \cite{schubfreg} (see Corollary \ref{cor:borel}) by an identification as done in \cite{fulton}.

Let us show that the $(r+1)\times (r+1)$ minors of a generic symmetric matrix give good defining equations for the space of symmetric matrices of rank $\leq r$ in $W$ using Theorem \ref{thm:defi} (the other cases are analogous and slightly easier). We work on downwards induction on $r$, the case $r=n$ being trivial. Let $V$ be the space of $r\times r$ symmetric matrices as above, and consider $X\subset V$ the matrices of rank $< r$. Clearly, the symmetric determinant is a good defining equation for $X\subset V$ (e.g.\ Lemma \ref{lem:compint}). The associated $G$-module $M'\subset \kk[G\cdot V]$ in Theorem \ref{thm:defi} is $M'=\nabla_G(2 \omega_r)$, and it is easy to see that it satisfies the condition that there are no dominant weights $\mu < 2 \omega_r$ with $(I_{G\cdot V})^{U}_\mu \neq 0$. The lift $\tilde{\mc{P}}'$ can be chosen to be the $r \times r$ minors of a generic symmetric matrix, while $\mc{P}_{G\cdot V}$ are the $(r+1)\times (r+1)$ minors, by the induction hypothesis. By Theorem \ref{thm:defi}, we conclude that $\tilde{\mc{P}}'$ is a good set of defining equations for $G\cdot X$ in $W$.

\subsection{Varieties of complexes on arbitrary quivers}\label{sec:radsquare}

The geometry of the Buchsbaum--Eisenbud varieties of complexes has been investigated thoroughly in a number of articles. In \cite{Kempfcomp} it has been shown that these varieties have rational singularities in characteristic zero, based on the method in \cite{Kempf76}. A characteristic-free approach has been pursued in \cite{DecoStrick} using Hodge algebras, where defining equations are provided as well. In characteristic zero, this result has been proved also in \cite{brioncomp} by showing that their algebra of covariants is a polynomial ring.
Frobenius splitting methods have been applied in \cite{MT2}. One can realize such varieties as certain open subsets in Schubert varieties \cite{zele}, \cite{LM98}.  Similar varieties have been studied in \cite{Strick1}, \cite{Strick2}, \cite{MT1} for other special quivers. These varieties can be considered for any quiver, and are particular cases of certain rank varieties of radical square zero algebras, as in explained in \cite{kinlor}. In \emph{ibid.}, it is shown that in characteristic zero all such varieties have rational singularities, and defining equations are provided. We explain now how to extend such results to arbitrary characteristic, as announced in Remark 4.16 of  \emph{ibid.} Additionally, we obtain analogous results for $B$-varieties.

We follow closely the notation established in \cite{kinlor}. Consider the (associative, non-commutative) radical square zero algebra $A=\kk Q/\kk Q_{\geq 2}$, with $Q$ an arbitrary finite quiver with the set of vertices $Q_0$ and arrows $Q_1$. For a dimension vector $\bd: Q_0 \to \bb{Z}_{\geq 0}$, we consider the representation space
\[
\rep_Q(\bd) = \prod_{\za \in Q_1} \Hom_\kk(\kk^{\bd(t\za)}, \kk^{\bd(h\za)}) = \bigoplus_{\za \in Q_1} (\kk^{\bd(t\za)})^* \oo \kk^{\bd(h\za)},
\]
and within the representation variety of $A$
\[\rep_A(\bd) = \{ M \in \rep_Q(\bd) \, \mid \, M_{\beta}\circ M_{\za}=0, \mbox{ for all } \za,\beta\in Q_1 \mbox{ with } h\za = t\beta\},\]
which has a natural action of the reductive group $\GL(\bd) = \prod_{x \in Q_0} \GL(\bd(x))$. For $x\in Q_0$ and $M\in \rep_Q(\bd)$, we put 
\[h_x(M)=\bigoplus_{h\za =x} M_{\za} \colon  \, \bigoplus_{h\za =x} M_{t\za} \to M_x.\]

For a dimension vector $\br \leq \bd$, we denote by $C_\br$ the closure of the set of representations $M \in \rep_A(\bd)$ such that $\rank h_x(M)=\br(x)$, for all $x\in Q_0$. Let $\bs=\bd - \br$. By \cite[Theorem 3.19]{kinlor} the variety $C_\br$ is irreducible, and it is non-empty if and only if 
\begin{equation}\label{eq:nonzero}
\sum_{h\za = x} \bs(t\za) \,  \geq \br(x), \mbox{ for all }  x\in Q_0.
\end{equation}
Furthermore, each irreducible component of $\rep_A(\bd)$ is of the form $C_\br$, for some $\br\leq \bd$. 

Now fix $\br \leq \bd$ as in (\ref{eq:nonzero}). With the notation from Section \ref{sec:mainresults}, we let $W=\rep_Q(\bd)$, $V=\bigoplus_{\za \in Q_1} (\kk^{\bs(t\za)})^* \oo \kk^{\br(h\za)}$, $G=\GL(\bd)$, $L= \prod_{x \in Q_0} (\GL(\bs(x)) \times \GL(\br(x)))$ . It is implicit from the proof of \cite[Theorem 3.19]{kinlor} that $C_\br = G\cdot V$ (in fact, the collapsing map $q: G\times_P V \to C_\br$ is a resolution of singularities). The variety $W$ (resp.\ $V$) is good in arbitrary characteristic by Lemma \ref{lem:good} and Proposition \ref{prop:mathieu}. Thus, by Theorem \ref{thm:introgood} the $G$-variety $C_\br$ is good and Theorem \ref{thm:main} implies the following result.
\begin{corollary}\label{cor:nodefreg}
The rank variety $C_\br$ is strongly $F$-regular when $\cha \kk >0$ (resp.\ of strongly $F$-regular type when $\cha \kk =0$).
\end{corollary}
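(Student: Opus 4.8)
The plan is to verify that the setup $W=\rep_Q(\bd)$, $V=\bigoplus_{\za\in Q_1}(\kk^{\bs(t\za)})^*\oo\kk^{\br(h\za)}$, $G=\GL(\bd)$, $L=\prod_{x\in Q_0}(\GL(\bs(x))\times\GL(\br(x)))$ fits into the framework of Section \ref{sec:mainresults}, and then invoke Theorem \ref{thm:main}(4) with $X=V$. First I would recall from \cite{kinlor} that $C_\br=G\cdot V$, where $V$ sits inside $W$ as a $P$-submodule for an appropriate parabolic $P$ (with Levi $L$) having $U_I$ acting trivially on $V$: indeed for each arrow $\za$ one chooses the parabolic in $\GL(\bd(h\za))$ stabilizing a fixed $\br(h\za)$-dimensional subspace and the parabolic in $\GL(\bd(t\za))$ stabilizing a fixed $\bs(t\za)$-codimensional subspace, so that the bundle $G\times_P V$ collapses onto $C_\br$. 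Since $W=\bigoplus_\za(\kk^{\bd(t\za)})^*\oo\kk^{\bd(h\za)}$ and $V=\bigoplus_\za(\kk^{\bs(t\za)})^*\oo\kk^{\br(h\za)}$ are each direct sums of modules of the form appearing in (\ref{eq:mainsetup}) for the respective reductive groups (each summand is a Weyl module, being $\Delta_G$ or $\Delta_L$ of a minuscule or near-minuscule weight — really just $\cHom$ spaces), Lemma \ref{lem:iso}(b) applies and $\left.\psi\right|_V$ is an isomorphism, in particular split injective. Thus the hypotheses needed to run the machinery are in place.

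Next I would check that $W$ and $V$ are good $G$- and $L$-varieties respectively in arbitrary characteristic. For a single $\cHom$-space $\kk^a{}^*\oo\kk^b$, the coordinate ring is a polynomial ring whose symmetric powers decompose with a good filtration (Cauchy formula); more precisely $\bigwedge(\kk^a{}^*\oo\kk^b)$ is tilting, so by Lemma \ref{lem:good}(3)-(4) this space is good, and $\bigwedge$ of a direct sum being the tensor product of the $\bigwedge$'s, Lemma \ref{lem:good}(3) and Proposition \ref{prop:mathieu} give that $W$ and $V$ are good. Then Theorem \ref{thm:good}(b), applied with $X=V$, $Z=W$ (using that $(V,V)$ is trivially a good pair and $\left.\psi\right|_V$ is split), yields that $(W,\,G\cdot V)$ is a good pair; in particular $G\cdot V=C_\br$ is a good $G$-variety. (This is exactly the instance of Theorem \ref{thm:introgood} cited in the text preceding the corollary.)

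Finally, with $C_\br=G\cdot X$ good and $X=V$ an $L$-submodule of $V$, I would apply Theorem \ref{thm:main}(4): it gives directly that $G\cdot X=C_\br$ is strongly $F$-regular when $\cha\kk>0$ and of strongly $F$-regular type when $\cha\kk=0$. That is the assertion. I do not expect any serious obstacle here — the entire content has been front-loaded into Theorems \ref{thm:good} and \ref{thm:main}; the only points requiring care are (i) correctly identifying the parabolic $P$ and confirming $C_\br=G\cdot V$, which is essentially recorded in \cite[Theorem 3.19]{kinlor} and its proof, and (ii) the goodness of $W$ and $V$, which reduces to the classical fact that tensor spaces (and their exterior algebras) have good/tilting filtrations together with the closure properties in Lemma \ref{lem:good} and Proposition \ref{prop:mathieu}. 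If anything is delicate it is bookkeeping across the many arrows of $Q$, but since everything decomposes as a direct sum over $Q_1$ and goodness is preserved under tensor products, this is routine.
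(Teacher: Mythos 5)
Your proposal is correct and follows essentially the same route as the paper: identify $C_\br$ as $G\cdot V$ in the collapsing framework, check goodness of $W$ and $V$ via Lemma \ref{lem:good} and Proposition \ref{prop:mathieu}, deduce goodness of $G\cdot V$ via Theorem \ref{thm:good}(b)/Theorem \ref{thm:introgood}, and conclude by Theorem \ref{thm:main}(4). The only minor imprecision is the claim that each summand of $W$ is a Weyl module: for a loop at a vertex $x$, the summand $(\kk^{\bd(x)})^*\otimes\kk^{\bd(x)}$ is tilting (hence has a Weyl filtration) but is not a Weyl module, nor a direct sum of Weyl modules when $p\mid\bd(x)$; however this does not affect the argument since splitness of $\psi|_V$ can be checked summand-by-summand as the relevant $L$-modules are simple $\cHom$-spaces, and goodness only needs Weyl/good filtrations rather than a semisimple decomposition.
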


Moreover, the varieties $\ol{BwV} \subset C_\br$ are $F$-rational when $\cha \kk > 0$ (resp.\ have rational singularities when $\cha \kk = 0$). Note that the Buchsbaum--Eisenbud varieties of complexes are spherical (e.g.\ \cite{brioncomp}), therefore such varieties are always $B$-orbit closures in this case as there are only finitely many $B$-orbits \cite{brisph}, \cite{vinsph}. We leave the details of the combinatorial characterization of such $B$-orbit closures to the interested reader. 

In \cite[Corollary 4.13]{kinlor}, explicit defining equations are provided for all $C_\br$ when $\cha \kk = 0$. 
We give a self-contained argument to show that, in the case when $Q$ has no loops, these equations are also defining equations when $\cha \kk > 0$.

For $\za \in Q_1$, we let $X_{\za}$ be the $\bd(t\za)\times \bd(h\za)$ generic matrix of variables. We identify the coordinate ring $\kk[\rep_{\kk Q}(\bd)]$ with a polynomial ring in the entries of the matrices $\{X_\za\}_{\za\in Q_1}$. For $x \in Q_0$, we write $H_x$ (resp.\ $T_x$) for the $\bd(x)\times \left(\displaystyle\sum_{h\za = x} \bd(t\za)\right)$ matrix (resp.\ $\left(\displaystyle\sum_{t\za = x} \bd(h\za)\right)\times \bd(x)$ matrix) obtained by placing the matrices $X_\za$ with $h\za = x$ next to (resp.\ with $t\za = x$ on top of) each other. 

\begin{corollary}\label{cor:nodedefi}
Assume $Q$ has no loops, and let $C_\br \subset \rep_A(\bd)$ be non-empty. The following set of polynomials in $\kk[\rep_{\kk Q}(\bd)]$ form a good set of generators for the prime ideal of $C_\br$, as $x$ runs through all the vertices in $Q_0$:
\begin{enumerate}
\item The $(\br(x)+1)\times (\br(x)+1)$ minors of $H_x$;
\item The $(\bs(x)+1) \times (\bs(x)+1)$ minors of $T_x$;
\item The entries of \, $T_x \cdot H_x$;
\end{enumerate}
\end{corollary}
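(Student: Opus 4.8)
The plan is to deduce this from Theorem~\ref{thm:defi} by identifying a good set of generators of the defining ideal $I_X \subset \kk[V]$ and computing the associated $G$-module $M' \subset \kk[C_\br]$. First I would set up the combinatorial framework exactly as in \cite[Section~4]{kinlor}: the variety $V = \bigoplus_{\za\in Q_1}(\kk^{\bs(t\za)})^* \oo \kk^{\br(h\za)}$ carries an action of $L = \prod_{x}(\GL(\bs(x))\times\GL(\br(x)))$, and $X \subset V$ is cut out by the condition that $h_x$ and $t_x$ have bounded rank together with the composition relations $t_x \cdot h_x = 0$. Concretely, on the smaller space the relevant equations are: the maximal minors (of the appropriate sizes, one less than the ranks $\br(x),\bs(x)$ that are ``used up'' by splitting off) of the generic block matrices $H_x^{\mathrm{sm}}, T_x^{\mathrm{sm}}$ built from the variables on $V$, and the entries of the products $T_x^{\mathrm{sm}}\cdot H_x^{\mathrm{sm}}$.

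The crux is to verify that this explicit set $\mc{P}_X$ is a \emph{good} set of generators of $I_X$ in the sense of Definition~\ref{def:goodgen}, i.e.\ that $M_{\mc{P}_X} = \op{span}_\kk \mc{P}_X$ has a good filtration as an $L$-module and that the multiplication map is surjective on $U_L$-invariants. Here I would invoke downward induction on $\br$ (the base case $\br = \bd$ being $C_{\bd} = \rep_A(\bd)$ itself, handled directly, or $C_\br = V$ when the rank conditions are vacuous) together with the structure of $V$: each space $(\kk^{\bs(t\za)})^* \oo \kk^{\br(h\za)}$ is a space of matrices, the minors of a generic matrix span a good module (Cauchy formula / \cite{boffi}), the entries of a product of two generic matrices span a good module, and by Lemma~\ref{lem:good}(3) and Proposition~\ref{prop:mathieu} tensor products and direct sums preserve the relevant filtrations. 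The surjectivity on $U_L$-invariants is where the ``no loops'' hypothesis enters, via the argument of \cite[Corollary~4.13]{kinlor}: when $Q$ has no loops the spaces at head and tail of each arrow are distinct $\GL$-factors, so $X$ is (up to the composition relations) essentially a product of classical determinantal varieties in distinct matrix spaces, for which good generation is known; the composition relations $t_x h_x = 0$ form a regular sequence complementing the minors, so Lemma~\ref{lem:compint} applies once one checks $\bigwedge M$ has a good filtration.

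Having established that $\mc{P}_X$ is a good set of generators of $I_X$, I would apply Theorem~\ref{thm:defi}. Since $W$ and $V$ are good (Lemma~\ref{lem:good}, Proposition~\ref{prop:mathieu}) and $C_\br = G\cdot V$ is good (Theorem~\ref{thm:introgood}), and in fact $(W, C_\br) = (W, G\cdot V)$ is a good pair for $\cha\kk$ large or — as one checks here directly using Corollary~\ref{cor:goodpair} and the explicit equations — in arbitrary characteristic when $Q$ has no loops, the hypotheses of Theorem~\ref{thm:defi} are met. The key computation is then: the $G$-module $M'$ associated to $M_{\mc{P}_X}$ by pushing forward $\mc{V}(M)$ along $G/P \to $ pt is exactly the span of the $(\br(x)+1)\times(\br(x)+1)$ minors of the \emph{full} generic matrix $H_x$, the $(\bs(x)+1)\times(\bs(x)+1)$ minors of $T_x$, and the entries of $T_x H_x$ — the $U_I$-invariant generators ``fatten up'' to the full-size blocks under $G$-saturation, just as in the symmetric matrix example worked out in Section~\ref{subsec:det}. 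One checks the weight condition of Theorem~\ref{thm:defi} (no dominant $\la > \mu$ with $(M')^U_\la \neq 0 \neq (I_{C_\br})^U_\mu$ for the inductively smaller variety), or alternatively that $M'$ decomposes into tilting/semisimple pieces, so the lift $\tilde{\mc{P}'}$ can be taken $G$-stable; then Theorem~\ref{thm:defi} gives that $\mc{P}_{C_\br} \cup \tilde{\mc{P}'}$ — which by the induction hypothesis on $\mc{P}_{C_\br}$ and the identification of $\tilde{\mc{P}'}$ is precisely the list (1)--(3) — is a good set of generators of $I_{C_\br}$. I expect the main obstacle to be the bookkeeping in the induction: tracking how the minor sizes and the rank data $\br, \bs$ shift when one peels off a rank-one summand at each vertex, and confirming that the ``small'' equations on $V$ saturate to exactly the stated full-size minors rather than to some a priori larger $G$-module; the loop-free hypothesis is what keeps the combinatorics of which $\GL$-factor acts on which index clean enough for this to go through.
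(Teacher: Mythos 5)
Your high-level strategy — split nodes one at a time, apply Theorem~\ref{thm:defi} inductively, verify good generation on the smaller $L$-variety, and use that $M'$ is tilting (Lemma~\ref{lem:good}(4)) so the lift can be chosen $G$-stable — matches the paper's plan. But there is a genuine gap in how your induction bottoms out, and it stems from a false claim.

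You assert that the composition relations $T_x H_x = 0$ ``form a regular sequence complementing the minors, so Lemma~\ref{lem:compint} applies once one checks $\bigwedge M$ has a good filtration.'' That is not true: the minors together with the entries of $T_x H_x$ do not form a regular sequence in $\kk[\rep_{\kk Q}(\bd)]$ in general (varieties of complexes are not complete intersections), so you cannot apply Lemma~\ref{lem:compint} to the full set of equations. What the paper does instead is first reduce, by splitting nodes via Theorem~\ref{thm:defi}, to the local $A_3$ quiver $1 \to 2 \to 3$ with equations concentrated at $x=2$, and then apply Theorem~\ref{thm:defi} once more at the outer vertices $1$ and $3$ (exactly as in the symmetric-matrix example of Section~\ref{subsec:det}) to reduce to $\br = (0, d_1, d_3)$ with $d_2 \geq d_1+d_3$. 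Only after this double reduction do the minors (types (1) and (2)) become vacuous, leaving \emph{only} the entries of $X_b X_a$ — and it is \emph{these alone} that form a regular sequence, to which Lemma~\ref{lem:compint} applies; the paper also invokes the Jacobian criterion here to check radicality, a step missing from your sketch. Your proposed base case is also off: you name ``$\br = \bd$, i.e.\ $C_\bd = \rep_A(\bd)$,'' but $C_\bd$ is nonempty only when $\bd = 0$ (by the inequality~(\ref{eq:nonzero}) with $\bs = 0$), and $\rep_A(\bd)$ is generally reducible so is never a single $C_\br$. Without the explicit reduction to the rank-zero $A_3$ situation, your downward induction on $\br$ has nowhere to land.
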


\begin{proof}
We work by splitting nodes one at a time, analogously to \cite[Corollary 4.13]{kinlor}. We note that in Theorem \ref{thm:defi}, the module $M'$ is tilting in this case (see Lemma \ref{lem:good} (4)). To conclude using Theorem \ref{thm:defi} as in \cite[Corollary 4.13]{kinlor}, we are reduced to show that the equations (1)--(3) with $x=2$ are good defining equations of $C_\br$ for the following quiver (compare with \cite[Proposition 4.4]{kinlor})
\[\xymatrix{ 1 \ar[r]^a & 2 \ar[r]^b & 3 } \]
As in the case of determinantal varieties in Section \ref{subsec:det}, we can further reduce using Theorem \ref{thm:defi} (applied at vertices $1$ and $3$) to the case $\br=(0,d_1,d_3)$ (when we have $d_2 \geq d_1 + d_3$). In such case only the equations of type (3) appear, and they form a regular sequence. Using the Jacobian criterion, one readily obtains that the ideal generated by these polynomials is radical. Moreover, by Lemma \ref{lem:compint} they give good defining equations for $C_\br \subset \rep_{\kk Q}(\bd)$, thus yielding the conclusion.
\end{proof}

The article \cite{kinlor} further demonstrates the usefulness of working in the relative situation $X\subset V$. By splitting nodes one at a time, the method is applied to a large number of other quiver varieties in characteristic zero. The main obstruction to extending such results to positive characteristics readily is that so far the good property of the corresponding $L$-variety $X$ has been studied only in a handful of cases (e.g.\  \cite{donkinconj}).

\subsection{Further examples}

When $G=\GL(n)$, $L=\GL(r)$ (with $r\leq n$), $W=\Delta_G(\la)$ and $V=\Delta_L(\la)$, the variety $G\cdot V$ is called higher rank variety \cite[Section 7]{weymanbook}. Thus, Theorem \ref{thm:main} generalizes Proposition 7.1.2 in \emph{loc.\ cit.}\ to characteristics that are not \lq\lq too small", and further gives new results for the varieties $\ol{BwV}$. We note that the result does not hold in arbitrary characteristic, as the following example shows.

\begin{example}\label{ex:counter} Let $G=\GL(3)$, $W=\bigwedge^3 \kk^6$, $V=\bigwedge^3 \kk^5$ with $\cha \kk = 2$. Then $V$ is a good variety, but $G\cdot V$ is not normal, as shown by Weyman \cite[Proposition 7.3.10]{weymanbook}. Using Theorems \ref{thm:good} and \ref{thm:main} we see that $W$ is not good (nor is the hypersurface given by the discriminant of degree $4$), a fact further observed in \cite[Example 3.3]{kallen}. Nevertheless, by Remark \ref{rem:normalize} the normalization of $G\cdot V$ is strongly $F$-regular.

We can extrapolate this to higher dimensions as follows. Set $X:=G\cdot V$ from above. Let $n\geq 6$, and consider inclusions $\bigwedge^3 \kk^5 \subset \bigwedge^3 \kk^6 \subset \bigwedge^3 \kk^n$. Then the saturation $Y:=\GL(n) \cdot \bigwedge^3 \kk^5 \, \subset \, \bigwedge^3 \kk^n$ is the same as the saturation $\GL(n) \cdot X \, \subset \GL(n)\cdot \bigwedge^3 \kk^6 \, \subset \bigwedge^3 \kk^n$. We have seen that $X$ is not strongly $F$-regular, hence neither is $Y$ by Proposition \ref{prop:invariantalg} and \cite[Theorem 5.5]{hh}, but the normalization of $Y$ is again strongly $F$-regular by Remark \ref{rem:normalize}. In particular, $\bigwedge^3 \kk^n$ is not good by Theorems \ref{thm:good} and \ref{thm:main}.
\end{example}

Other examples of saturations $G\cdot V$ (and $\ol{BwV}$) where our results can be readily applied include varieties considered in \cite[Section 2]{Kempf76}, \cite{multicone}, \cite{samwey}, \cite{marcus}, \cite{landwey2}, and the subspace varieties in  \cite{landwey1} (including the relative setting for secant varieties, as in \cite[Proposition 5.1]{landwey1}), thus strengthening the corresponding results therein.

As explained in the Introduction, the results can be effectively used in the study of the geometry of orbit closures for any representation $W$ (as in (\ref{eq:mainsetup})) of a reductive group. Since such problems have been pursued intensively in numerous articles for various special representations, it would be difficult to list them all in relation with our results. 
We simply direct the reader to \cite{weymanbook} and the references therein for a large collection of such examples.

\subsection{Vanishing results for bundles on Schubert varieties}\label{subsec:ample} 

First, we record the following positive characteristic version of the Grauert--Riemenschneider theorem for collapsing of bundles (cf. \cite[Section 3]{Kempf76}). Such results are of interest (see \cite[Theorem 1.3.14]{brionkumar}), as in general they do not hold in positive characteristic. We continue with the notation from Section \ref{sec:mainresults}. We denote by $\omega_Y$ the canonical sheaf of a Cohen--Macaulay variety $Y$ and put $\eta=\mc{V}(V^*)$ as in Remark \ref{rem:xi}.

\begin{proposition}\label{prop:duality}
Take $w\in \mc{W}^I$ and put $c=\dim X(w)+\dim V - \dim \ol{BwV}$. If $G\cdot V$ is good then $\mathbf{R}^c q_*  \, \omega_{\ol{BwP}\times_P V} \cong \omega_{\ol{BwV}}$ and  
\[H^i(X(w)_P, \,\, \Sym_d  \eta \oo  \det \eta \oo \omega_{X(w)_P}\,)=0 \mbox{ for all } i\neq c, d\geq 0.\]
\end{proposition}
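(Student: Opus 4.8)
The statement is a Grauert–Riemenschneider-type vanishing combined with a Serre-duality computation, so the plan is to run the collapsing machinery of Theorem \ref{thm:main} together with (relative) duality. First I would note that under the hypothesis "$G\cdot V$ good", Theorem \ref{thm:main}(1) gives the isomorphism $\mc{O}_{\ol{BwV}} \xrightarrow{\cong} \mathbf{R}q_* \mc{O}_{\ol{BwP}\times_P V}$, in particular $\mathbf{R}^i q_* \mc{O}_{\ol{BwP}\times_P V}=0$ for $i>0$, and moreover $\ol{BwV}$ is Cohen–Macaulay (being $F$-rational, resp.\ with rational singularities, by Theorem \ref{thm:main}(4) applied with $X=V$, which is an $L$-submodule). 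Since $\ol{BwP}\times_P V$ is a vector bundle over the Schubert variety $X(w)_P$, it is itself Cohen–Macaulay, and $\dim \ol{BwP}\times_P V = \dim X(w)_P + \dim V$; hence $c$ is exactly the relative dimension defect $\dim(\ol{BwP}\times_P V) - \dim \ol{BwV}$, which is the cohomological dimension of $\mathbf{R}q_*$ here.

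**The duality step.** Next I would apply Grothendieck–Serre duality for the proper morphism $q\colon \ol{BwP}\times_P V \to \ol{BwV}$. Since both source and target are Cohen–Macaulay and $\mathbf{R}q_*\mc{O} = \mc{O}_{\ol{BwV}}$ concentrated in degree $0$, the dualizing complex pushes forward to give $\mathbf{R}q_* \omega_{\ol{BwP}\times_P V}[\dim \text{source}] \cong \omega_{\ol{BwV}}[\dim \ol{BwV}]$ in the derived category (this is exactly the dual statement to $\mathbf{R}q_*\mc{O}=\mc{O}$), which unwinds to $\mathbf{R}^c q_* \omega_{\ol{BwP}\times_P V} \cong \omega_{\ol{BwV}}$ and $\mathbf{R}^i q_* \omega_{\ol{BwP}\times_P V}=0$ for $i\neq c$. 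This gives the first assertion. The relative canonical sheaf of the bundle $\ol{BwP}\times_P V \to X(w)_P$ is $\pi^*(\det \eta)^{\vee}$ twisted appropriately; more precisely $\omega_{\ol{BwP}\times_P V} \cong \pi^* \big(\omega_{X(w)_P}\oo \det(\eta^\vee)\big)$, where I use that the fibre is the affine space $V$ with $\eta = \mc{V}(V^*)$ the bundle of linear functions, so the relative dualizing sheaf contributes $\det \eta$ after identifying sections — I would be careful here to get the twist right, since $\eta=\mc{V}(V^*)$ means $\mc{V}(V) = \eta^\vee$ and the top exterior power of the relative cotangent bundle of the total space of $\eta^\vee$ is $\det\eta$ pulled back.

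**From the bundle to Schubert cohomology.** Now I would compute $H^i(\ol{BwP}\times_P V, \omega_{\ol{BwP}\times_P V})$ two ways. On one hand, by the Leray spectral sequence for $q$ and the vanishing just proved, $H^i(\ol{BwP}\times_P V,\omega) \cong H^{i-c}(\ol{BwV}, \omega_{\ol{BwV}})$, and since $\ol{BwV}$ is Cohen–Macaulay affine of dimension $\dim\ol{BwV}$, local duality (Grothendieck vanishing for the top local cohomology, or simply that an affine CM scheme has $H^j(\omega)=0$ for $j>0$... actually one needs the sharper statement: since $F$-rational / rational singularities, $\omega_{\ol{BwV}}$ has no higher cohomology — indeed on an affine CM scheme $H^j$ of any coherent sheaf vanishes for $j>\dim$, and here we want it concentrated, so I'd instead argue directly) — the cleanest route is the second computation. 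On the other hand, using the projection $\pi\colon \ol{BwP}\times_P V \to X(w)_P$ and the fact that $\pi_* \mc{O} = \Sym \eta$ (so $\pi_*$ of the relative canonical twist gives $\bigoplus_d \Sym_d\eta \oo \det\eta \oo \omega_{X(w)_P}$, and $\mathbf{R}^{>0}\pi_* = 0$ since $\pi$ is affine), the Leray spectral sequence for $\pi$ gives
\[
H^i(\ol{BwP}\times_P V,\, \omega_{\ol{BwP}\times_P V}) \cong \bigoplus_{d\geq 0} H^i\big(X(w)_P,\ \Sym_d\eta \oo \det\eta \oo \omega_{X(w)_P}\big).
\]
Comparing with the first computation reduces the desired vanishing to: $H^j(\ol{BwV},\omega_{\ol{BwV}})=0$ for $j>0$. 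This last point I would get from the fact that $\ol{BwV}$ has rational singularities (char.\ $0$) resp.\ is $F$-rational (char.\ $p$): an $F$-rational (or rational-singularity) affine variety which is a cone — or more robustly, using that $\ol{BwV}$ is a $B$-stable cone in $W$ with a contracting $\mathbb{G}_m$-action — has $H^{j}(\omega)=0$ for $j>0$ by graded local duality applied to the Cohen–Macaulay graded ring $\kk[\ol{BwV}]$ together with the vanishing of the relevant local cohomology below top degree. **The main obstacle** I anticipate is precisely pinning down this input cleanly: one must justify $H^{>0}(\omega_{\ol{BwV}})=0$ rigorously (equivalently that the Leray sequence for $q$ degenerates onto the right single degree), and separately verify the canonical-sheaf twist on the bundle $\ol{BwP}\times_P V$, since sign/duality conventions for $\eta$ vs.\ $\eta^\vee$ and for $\omega_{X(w)_P}$ on the possibly-singular Schubert variety are easy to get wrong — the Schubert variety is only Cohen–Macaulay, not smooth, so $\omega_{X(w)_P}$ must be handled as a dualizing sheaf throughout, and the relative duality for $q$ must be invoked in its CM-to-CM form rather than the smooth form.
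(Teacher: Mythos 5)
Your argument is correct and follows the same route as the paper: use Theorem \ref{thm:main} to get $\mathbf{R}q_*\mc{O}=\mc{O}_{\ol{BwV}}$ and Cohen--Macaulayness, invoke Grothendieck duality for $q$ to concentrate $\mathbf{R}q_*\omega_{\ol{BwP}\times_P V}$ in degree $c$, identify $\omega_{\ol{BwP}\times_P V}\cong\pi^*(\omega_{X(w)_P}\otimes\det\eta)$ by adjunction, and compare the Leray sequences for $q$ and $\pi$. The one misstep is your final "main obstacle": the required vanishing $H^{j}(\ol{BwV},\omega_{\ol{BwV}})=0$ for $j>0$ is simply Serre's affine vanishing (any coherent sheaf on an affine scheme has no higher cohomology), so no appeal to $F$-rationality, rational singularities, cone structures, or local duality is needed there; the Cohen--Macaulay/$F$-rational input is used only once, to know that $\omega_{\ol{BwV}}$ exists as a dualizing sheaf (equivalently that the dualizing complex is concentrated) so that $q^!\omega_{\ol{BwV}}\cong\omega_{\ol{BwP}\times_P V}[c]$.
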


\begin{proof}
Put $Y=\ol{BwP}\times_P V$ and $Z=\ol{BwV}$. By Theorem \ref{thm:main} we have $\mathbf{R} q_* \mc{O}_Y \cong \mc{O}_Z$. and $Z$ is Cohen--Macaulay (\ref{eq:f-sing}). As $q^{!} \omega_Z \cong \omega_Y[c]$, we obtain using Grothendieck duality \cite[Theorem III.11.1]{hartshorne2}
\[\mathbf{R} q_*  \omega_Y \cong \mathbf{R} q_* \mc{H}\!om_{\mc{O}_Y} (\mc{O}_Y, \omega_Y) \cong \mc{H}\!om_{\mc{O}_Z} (\mc{O}_Z, \omega_Z[-c])\cong \omega_Z[-c].\]
The conclusion follows by the adjunction formula \cite[Proposition II.8.20]{hartshorne}.
\end{proof}

\begin{remark}\label{rem:bott}
When $X(w)=G/P$ and $\cha \kk = 0$, the bundle  $\Sym_d  \eta \oo  \det \eta \oo \omega_{X(w)_P}$ is semi-simple. Thus, using the Borel--Weil--Bott theorem (see \cite[Section 4]{weymanbook} and \cite[Corollary 5.5]{jantzen}) and Serre duality \cite[Corollary 7.7]{hartshorne}, in this case we can deduce from Proposition \ref{prop:duality} that the $L$-dominant weights that appear in $\Sym V \oo \det V$ are either singular or lie in a single Bott chamber (giving cohomology in degree $\dim G\cdot V - \dim V$).
\end{remark}

If we only assume that $V$ is good, one can give an analogous result to Proposition \ref{prop:duality} using normalization as in Remark \ref{rem:normalize}. Along these lines, we give the following version of Griffiths' vanishing theorem \cite{griff} for Schubert varieties in positive characteristic.

\begin{corollary}\label{cor:griff}
Assume $V$ is a good and let $\la \in X(T)_+$ with $\langle \la , \al_i^\vee \rangle = 0$ if and only if $i\in I$ (i.e.\ $\mc{L}(\la)$ is ample on $G/P$). Then
\[H^i(X(w)_P, \,\,  \Sym_d  \eta \oo \det \eta \oo \mc{L}(\la) \oo \omega_{X(w)_P}\,)=0 \mbox{ for all } i>0, d\geq 0, w\in \mc{W}^I.\]
\end{corollary}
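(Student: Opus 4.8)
I would follow the proof of Proposition \ref{prop:duality}. Put $Y=\ol{BwP}\times_P V$, let $\pi\colon Y\to X(w)_P$ be the bundle projection and $q\colon Y\to \ol{BwV}$ the collapsing, so that $Y$ is the total space of the vector bundle $\V(V)$ with $\pi_*\mc{O}_Y=\Sym\eta$ and $\omega_Y\cong \pi^*(\det\eta\oo\omega_{X(w)_P})$. By the projection formula and the affineness of $\pi$,
\[\bigoplus_{d\geq 0} H^i\big(X(w)_P,\ \Sym_d\eta\oo\det\eta\oo\mc{L}(\la)\oo\omega_{X(w)_P}\big)\ \cong\ H^i\big(Y,\ \omega_Y\oo\pi^*\mc{L}(\la)\big),\]
and since $\ol{BwV}$ is affine and $q$ proper this equals $\Gamma\big(\ol{BwV},\,\mathbf{R}^i q_*(\omega_Y\oo\pi^*\mc{L}(\la))\big)$. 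Hence it suffices to prove the relative vanishing $\mathbf{R}^i q_*(\omega_Y\oo\pi^*\mc{L}(\la))=0$ for all $i>0$.

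The line bundle $\mc{A}:=\pi^*\mc{L}(\la)$ is $q$-ample: $\mc{L}(\la)$ is ample on $G/P$, hence on $X(w)_P$, and for $z\in\ol{BwV}$ the restriction of $\pi$ to the fibre $q^{-1}(z)$ is injective (a point of $q^{-1}(z)$ is determined by its image in $G/P$) and proper, hence finite; so $\mc{A}|_{q^{-1}(z)}$ is ample, and ampleness is fibrewise for proper morphisms. Thus the claim is a relative Kodaira-type vanishing $\mathbf{R}^{>0}q_*(\omega_Y\oo\mc{A})=0$.

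When $\cha\kk=0$ I would invoke relative Kawamata--Viehweg: $X(w)_P$ has rational singularities, hence so does the vector bundle $Y$, and $Y$ is Cohen--Macaulay; choosing a resolution $\mu\colon\widetilde Y\to Y$ one has $\mathbf{R}\mu_*\omega_{\widetilde Y}\cong\omega_Y$ by Grauert--Riemenschneider, and $\mu^*\mc{A}$ is nef and big relative to $q\circ\mu$, so relative Kawamata--Viehweg on $\widetilde Y$ over $\ol{BwV}$ gives $\mathbf{R}^{>0}(q\circ\mu)_*(\omega_{\widetilde Y}\oo\mu^*\mc{A})=0$, which by the projection formula is $\mathbf{R}^{>0}q_*(\omega_Y\oo\mc{A})$.

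In characteristic $p$ — the crux — $Y$ is Cohen--Macaulay and globally Frobenius split: the affine space $V$ with its linear $P$-action carries the $\GL(V)$-canonical splitting, which is $B$-canonical via $P\to\GL(V)$; inducing it yields a $B$-canonical splitting of $G\times_P V$ that restricts to a splitting of $Y$ compatible with the preimages of the boundary Schubert subvarieties. I would combine this with the $F$-rationality of $\ol{BwV}$ (Theorem \ref{thm:main}(4) when $G\cdot V$ is good, and otherwise its normalization as in Remark \ref{rem:normalize}, using $\mathbf{R}q_*\mc{O}_Y=\mc{O}$) and relative Serre vanishing for the $q$-ample $\mc{A}$: tensoring the split inclusion $\mc{O}_Y\into F^e_*\mc{O}_Y$ with $\omega_Y\oo\mc{A}$ exhibits $\omega_Y\oo\mc{A}$ as a direct summand of $F^e_*\big(F^{e*}(\omega_Y\oo\mc{A})\big)$, so $\mathbf{R}^{>0}q_*(\omega_Y\oo\mc{A})$ is a summand of $F^e_*\,\mathbf{R}^{>0}q_*\big(F^{e*}\omega_Y\oo\mc{A}^{\oo p^e}\big)$, which vanishes for $e\gg 0$ by relative Serre vanishing once the sheaves $F^{e*}\omega_Y$ are controlled through the trace of the splitting. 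Making this uniform in $e$ — the interplay between the Frobenius splitting of $Y$, the $F$-rationality of the base $\ol{BwV}$, and the $q$-ampleness of $\pi^*\mc{L}(\la)$ — is exactly where the tight-closure techniques of \cite{hh}, \cite{hashiunip} already invoked for Theorem \ref{thm:main} are needed, and is the main obstacle.
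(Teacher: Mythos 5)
Your reduction to the relative vanishing $\mathbf{R}^{>0}q_*(\omega_Y\oo\pi^*\mc{L}(\la))=0$ is sound, and the characteristic-zero branch via relative Kawamata--Viehweg would indeed work, but the characteristic-$p$ branch has a genuine gap that you yourself flag. The Mehta--Ramanathan-style mechanism you sketch does not close: tensoring the splitting $\mc{O}_Y\hookrightarrow F^e_*\mc{O}_Y$ with $\omega_Y\oo\mc{A}$ produces $F^e_*\bigl(\omega_Y^{\otimes p^e}\oo\mc{A}^{\otimes p^e}\bigr)$, not $F^e_*\bigl(\omega_Y\oo\mc{A}^{\otimes p^e}\bigr)$, so you cannot feed it to relative Serre vanishing for a $q$-ample bundle without first trading $\omega_Y^{\otimes p^e}$ back for $\omega_Y$. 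That step requires a Cartier/trace identification $F^e_*\omega_Y\to\omega_Y$ compatible with $q$ (not just a splitting of Frobenius on $Y$), and controlling it uniformly in $e$ is precisely the unproved obstacle you acknowledge. As written, the argument does not prove the statement in positive characteristic.

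The paper avoids all of this by an enlargement trick rather than a new relative Kodaira theorem. Set $W'=\Delta_G(\la)\oplus W$ and $V'=\kk_\la\oplus V$, so $\eta':=\mc{V}(V'^*)=\mc{L}(\la)\oplus\eta$; then $\det\eta'=\mc{L}(\la)\oo\det\eta$ and $\Sym_d\eta\oo\mc{L}(\la)\oo\det\eta\oo\omega_{X(w)_P}$ is a direct summand of $\Sym_d\eta'\oo\det\eta'\oo\omega_{X(w)_P}$. Because $\mc{L}(\la)$ is ample on $G/P$, the map $G\times_P\kk_\la\to G\cdot\kk_\la$ is an isomorphism over the complement of the zero section, and hence the new collapsing $q':\ol{BwP}\times_P V'\to\ol{BwV'}$ is birational; thus the shift $c$ in Proposition~\ref{prop:duality} vanishes, and that proposition (together with Remark~\ref{rem:normalize}, since one only assumes $V$ good) gives the vanishing for all $i>0$ in one stroke, in any characteristic. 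Your observation that $\mc{A}$ is $q$-ample because fibres of $q$ map finitely into $X(w)_P$ is correct and points in the right direction, but the paper upgrades it to generic injectivity of $q'$ (birationality), which is what makes Proposition~\ref{prop:duality} apply directly with $c=0$ and removes the need for any characteristic-$p$ relative vanishing input.
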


\begin{proof}
We put $W'= \Delta_G(\la)\oplus W$, $V' = \kk_{\la} \oplus V$ and consider $q: G \times_P V' \to G\cdot V'$. To conclude by Proposition \ref{prop:duality} in combination with Remark \ref{rem:normalize}, it is enough to show that $q$ is an isomorphism on the open $G\times_P ((\kk_{\la} \setminus\{0\}) \times V)$ (so $q$ is birational). It is known that the map $q_1 : G\times_P \kk_{\la} \to G\cdot \kk_{\la}$ is an isomorphism on the open $G\times_P (\kk_{\la} \setminus\{0\})$ (e.g.\ \cite[Exercise 5.8]{weymanbook}). Further, we have an isomorphism $G\times_P (\kk_{\la} \times W) \cong (G\times_P \kk_{\la}) \times W$ given by $(g,l,w) \mapsto (g,l,gw)$. Composing the latter map with $q_1$ we obtain the result.
\end{proof}

Note that when $\eta$ is a line bundle and $d=0$ (or when $V=0$), the result amounts to the classical Kodaira-type vanishing property for Schubert varieties that can be realized as a consequence of Frobenius splitting or global $F$-regularity \cite{schubsplit}, \cite{schubfreg}, \cite{smith2}. 

\bibliographystyle{alpha}
\bibliography{biblo}

\end{document}